\def\d{\mathrm{d}}
\newtheorem{assumption}{Assumption}
\newtheorem{theorem}{Theorem}
\newtheorem{proposition}{Proposition}
\newtheorem{definition}{Definition}
\title{Two Kinds of Learning Algorithms for Continuous-Time VWAP Targeting Execution}
\author{
 Xingyu Zhou \\
  School of Mathematical Sciences\\
  Fudan University\\
   Shanghai, China 200433 \\
  \texttt{xingyuzhou22@m.fudan.edu.cn} \\
  %% examples of more authors
   \And
 Wenbin Chen \\
  School of Mathematical Sciences\\
  Fudan University\\
  Shanghai, China 200433 \\
  \texttt{wbchen@fudan.edu.cn} \\
  \And
Mingyu Xu \\
  School of Mathematical Sciences\\
   Fudan University\\
  Shanghai, China 200433 \\
  \texttt{xumy@fudan.edu.cn} \\
  %% \AND
  %% Coauthor \\
  %% Affiliation \\
  %% Address \\
  %% \texttt{email} \\
  %% \And
  %% Coauthor \\
  %% Affiliation \\
  %% Address \\
  %% \texttt{email} \\
  %% \And
  %% Coauthor \\
  %% Affiliation \\
  %% Address \\
  %% \texttt{email} \\
}
\begin{document}
\maketitle
\begin{abstract}
The optimal execution problem has always been a continuously focused research issue, and many reinforcement learning (RL) algorithms have been studied. In this article, we consider the execution problem of targeting the volume weighted average price (VWAP) and propose a relaxed stochastic optimization problem with an entropy regularizer to encourage more exploration. We derive the explicit formula of the optimal policy, which is Gaussian distributed, with its mean value being the solution to the original problem. Extending the framework of continuous RL to processes with jumps, we provide some theoretical proofs for RL algorithms. First, minimizing the martingale loss function leads to the optimal parameter estimates in the mean-square sense, and the second algorithm is to use the martingale orthogonality condition. In addition to the RL algorithm, we also propose another learning algorithm:  adaptive dynamic programming (ADP) algorithm, and verify the performance of both in two different environments across different random seeds. Convergence of all algorithms has been verified in different environments, and shows a larger advantage in the environment with stronger price impact. ADP is a good choice when the agent fully understands the environment and can estimate the parameters well. On the other hand, RL algorithms do not require any model assumptions or parameter estimation, and are able to learn directly from interactions with the environment.
\end{abstract}

% keywords can be removed
%\keywords{First keyword \and Second keyword \and More}

\section{Introduction}
In the last decade, algorithmic trading and quantitative finance have been booming, and various trading strategies, such as market making strategies, pair trading and optimal execution of large orders \cite{cartea2015algorithmic}, have attracted more and more people's attention. Because it is not possible to short stocks in China, a unique intraday strategy is attracting growing interest, which is trading on the basis of the previous day's position to get the enhancement of income. VWAP is the benchmark for evaluating intraday prices, and the research on VWAP in investment is very important in China. In the Chinese market, algorithmic trading has become the choice of more and more investors. How to execute a large order is of great importance in research and trading \cite{gueant2016financial}. Generally speaking, the designed algorithm decomposes large orders into smaller ones, which will be executed sequentially within the given trading time. Under the same conditions, the longer trading transaction time causes smaller impacts on the market, but the execution price is more uncertain. 
		
		The discussion of optimal execution can be traced back to the work of Bertsimas and Lo in 1998, which shows the optimal policy is to split the original order into smaller orders in time, under the assumption that price is a random walk process with linear permanent price impacts \cite{bertsimas1998optimal}. The model considered by Almgren in 2001 is the execution of portfolios, within which framework price impacts are categorized into temporary and permanent components when considering risk factors \cite{almgren2001optimal}.  Almgren then considered a model with nonlinear price impacts \cite{almgren2003optimal} and conducted an empirical study in the US market\cite{almgren2005direct}. Schied and Sch{\"o}neborn analyzed the sensitivity of the value function and optimal strategies to various model parameters in the liquidity model of Almgren \cite{schied2009risk}. 
		Subsequently, there are many different studies on optimal execution problems, such as price impacts (Gerig and Austin \cite{gerig2008theory}, Moro et al. \cite{moro2009market}, Alfonsi et al. \cite{alfonsi2010optimal}, Gatheral et al. \cite{gatheral2012transient}, Cartea and Jaimungal \cite{cartea2016incorporating}, Curato et al. \cite{curato2017optimal}), using limit orders in trading (Gu{\'e}ant, Lehalle et al. \cite{gueant2012optimal}, Cartea et al. \cite{cartea2014buy}, Cartea and Jaimungal \cite{cartea2015optimal}), different risk measurement methods (Kharroubi and Pham \cite{kharroubi2010optimal}, Robert et al. \cite{robert2012measuring}, Gu{\'e}ant and Lehalle \cite{gueant2015general}, Cartea and Jaimungal \cite{cartea2015risk}, Tsoukalas et al. \cite{tsoukalas2019dynamic}.), and different assumptions about the price process (Bayraktar and Ludkovski \cite{bayraktar2014liquidation}, Moazeni et al. \cite{moazeni2013optimal}, Alfonsi and Blanc \cite{alfonsi2016dynamic}). 
		
		In optimal execution problems, traders consistently place one-sided orders, which can potentially lead to an unnatural accumulation of one-sided pressure in the order book. Thus, the most desirable approach for traders would be to adopt stealthy strategies that allow their orders to integrate into the order flow. VWAP was introduced by Berkowitz \cite{berkowitz1988total} in 1988 and is now a common benchmark used in the financial industry to measure price impacts by comparing it to execution prices. A static VWAP strategy was proposed by Konishi in 2002 \cite{konishi2002optimal}, and many dynamic predicting VWAP algorithms for volume decomposition have been proposed since then (see, for example, Białkowski \cite{bialkowski2008improving}, Humphrey \cite{humphery2011optimal}, McCulloch and Kazakov \cite{mcculloch2012mean}). There are also some machine learning algorithms that have been applied to VWAP prediction (see, for example, Alvim et al. \cite{alvim2010daily}, Brownlees et al.\cite{brownlees2011intra}, Liu and Lai \cite{liu2017intraday}, Jeong et al. \cite{jeong2021using}). 
		
		Another popular method is to design VWAP algorithms from the perspective of optimal control problems. The basic idea of this approach is that the market is modeled with different stochastic processes, and then different value functions are set to measure policies according to the investor's objectives. Under appropriate conditions, the optimal value function can be expressed by the Hamilton-Jacobi-Bellman (HJB) equation, and the optimal strategy can be solved by dynamic programming. See the book of Yong and Zhou \cite{yong1999stochastic} for more about stochastic control and HJB equations. 
		Frei and Westray aimed to minimize the sliding point relative to the market VWAP, derived and solved the corresponding HJB equation \cite{frei2015optimal}. In a similar way, Guéant used an exponential function as the utility function, while optimizing the execution cost and difference between the execution price and VWAP \cite{gueant2014vwap}. In addition, Mitchell established a model of interaction between volumes and prices, and obtained the optimal dynamic VWAP strategy within his more general framework \cite{mitchell2013optimal}. 
		
		Reinforcement learning (RL) is a hot research field in recent years, which mainly studies how agents should take actions in the interaction with environment to maximize the cumulative rewards. Many significant applications in finance have been found in this field, particularly in constructing portfolios (Wang et al. \cite{wang2019alphastock}, Wang et al. \cite{wang2021deeptrader}), capturing single-asset trading signals (Gao et al. \cite{gao2000algorithm}, Deng et al. \cite{deng2016deep}, Almahdi and Yang \cite{almahdi2017adaptive}), option hedging (Buehler et al. \cite{buehler2019deep}, Kolm and Ritter \cite{kolm2019dynamic}), and optimal execution. Using Deep Q-network (DQN) algorithm, Nevmyvaka completed empirical demonstrations of RL in modern financial markets in the US \cite {nevmyvaka2006reinforcement}. Park and Van Roy proposed a method for simultaneously trading and learning model parameters \cite {park2015adaptive}. Furthermore, some new RL algorithms have also been applied to optimal execution problems, such as Double DQN (Dab{\'e}riu et al. \cite{daberius2019deep}, Karpe et al. \cite{karpe2020multi}, Ning et al. \cite{ning2021double}), Proximal Policy Optimization (Lin and Beling \cite{lin2021end}, Fang \cite{fang2021universal}, Schnaubelt \cite{schnaubelt2022deep}). 
		Haarnoja et al. proposed an off-policy Actor-Critic deep RL algorithm in 2018, named the Soft Actor-Critic (SAC), which is based on the maximum entropy RL framework. In SAC, the actor aims to maximize both the expected reward and the entropy \cite{haarnoja2018soft} .
		
		Despite the significant achievements of RL in solving optimal execution problems, traditional RL methods are still limited to discrete states and action spaces. However, it might be more practical to concern about continuous states and action spaces. In recent years, there has been a growing interest in continuous RL. It is natural to establish a continuous time framework for high-frequency decision problems, such that some unique results can be used to develop new RL algorithms. In 2020, Wang et al. considered continuous-time RL and devised a method to encourage exploration while also adding an entropy regularizer to balance exploration and exploitation, and transform original optimization problems into relaxed stochastic control problems \cite {wang2020reinforcement}. Subsequently, Wang et al. applied this method to the continuous-time mean-variance problem, proving that the optimal strategy under the regularized exploratory formulation must be Gaussian distribution \cite{wang2020continuous}. Jia and Zhou have published their series of work, establishing a general theoretical framework for continuous-time RL. Jia and Zhou first proposed a unified framework to study policy evaluation and related temporal difference methods in continuous state and action spaces, and designed two algorithms about the martingale loss function and the martingale orthogonality condition, which may correspond to classical RL algorithms \cite{jia2022policy}. Then Jia and Zhou demonstrated that by introducing an auxiliary running reward function, the gradient of the value function can be calculated similarly through the method in \cite{jia2022policy}, which allows the policy to be updated \cite{jia2022policy2}. In 2023, q-learning corresponding to discrete-time Q-learning has been proposed. Specifically q-functions are defined related to the instantaneous advantage rate function and Hamiltonian, and also able to be characterized by martingale conditions \cite{jia2023q}. Considering Almgren-Chriss model in continuous time, Wang, Gao and Li have developed an offline Actor-Citic algorithm for the optimal execution problem based on the framework of \cite{jia2022policy} in 2023, and provided finite-time error analysis  \cite{wang2023reinforcement}. There are also some other researches on continuous RL, such as mean-field games (Guo et al. \cite{guo2022entropy}, Firoozi and Jaimungal \cite{firoozi2022exploratory}, Frikha et al. \cite{frikha2023actor}), regret bounds for Markov decision processes (Gao and Zhou \cite{gao2022logarithmic, gao2022square}, Bayraktar and Kara \cite{bayraktar2023approximate}), and linear-quadratic problems (Szpruch et al. \cite{szpruch2021exploration}, Li et al.\cite{li2022stochastic}, Hambly et al. \cite{hambly2023policy}). 
		
		In this paper, aimed at solving the VWAP targeting execution problem, we consider an exploratory version, prove the existence and uniqueness of state equations and show that the optimal policy follows a Gaussian distribution by solving the HJB equation of the exploratory problem. Then
		we propose an adaptive dynamic programming (ADP) method for obtaining both deterministic and stochastic policies, which is the first time that ADP is used to solve VWAP targeting problems as far as we know. As for RL algorithms, following the framework created by Jia and Zhou \cite{jia2022policy,jia2022policy2}, we extend it to stochastic processes with Poisson jumps, in which jumps are used to characterize the market trading speed. Specifically, we demonstrate two martingale approaches: martingale loss function and martingale orthogonality condition, both applicable to policy evaluation. Furthermore, we show how the martingale orthogonality condition can be reformulated as an equivalent minimization problem. We also prove that the policy gradient can be calculated directly from samples by introducing an auxiliary reward function. Based on the theoretical derivation, we design two Actor-Critic algorithms combining policy evaluation and policy improvement to solve the VWAP targeting problem, called ML-AC and MO-AC. In the numerical results, we employ the Monte Carlo method to construct two distinct market simulators, adopt the time-weighted average price (TWAP) as a benchmark, and utilize the Soft Actor-Critic algorithm \cite{haarnoja2018soft} for comparative analysis. This algorithm is a type of RL algorithm applicable in discrete spaces with an entropy regularizer. According to the numerical results, we verify the effectiveness of our algorithms, and that ADP is a good choice when the agent knows the environment well, while Actor-Critic algorithms are more flexible to directly learn from interaction with the environment. 
		
		The work of our paper is described as follows. First, the liquidation problem considers market trading speed as a mean-reverting process with Poisson jumps, and the stock midprice process as an arithmetic Brownian motion. The midprice is subject to permanent price impacts from agents, and the transaction price is subject to temporary price impacts. For more modeling details refer to \cite{cartea2015algorithmic} and its references. The objective is to maximize the return as well as to track the speed of market trading during the process, and the explicit expression for the optimal policy can be solved in this model setup, which depends on some parameters, see for \cite{cartea2016closed}. The parameters of the jump diffusion process are not amenable to direct calculation via maximum likelihood estimation, primarily due to the absence of an explicit likelihood function for the model's process. Instead, we may only obtain numerical estimates using methods like the inverse Fourier transformation of relevant characteristic functions \cite{abate1995numerical}. To avoid this problem, we consider the HJB equation directly and solve it through ADP proposed by Werbos \cite{werbos1992approximate}, and the same approach can be applied to the exploratory liquidation problem. One advantage of ADP is that we do not need complex numerical calculation to estimate the parameters of market trading speeds.
		
		Furthermore, to find the optimal policy with randomness, we consider the exploratory liquidation problem and introduce an entropy regularizer to measure our level of exploration. We prove that there exists a unique solution for exploratory state equations and the optimal policy obeys a Gaussian distribution. The mean value of this distribution is the same as the optimal policy of the original problem, and the standard deviation dependents on the temperature parameter, which represents the level of encouragement for exploration. We then refer to the RL framework introduced by Jia and Zhou  \cite{jia2022policy,jia2022policy2}, and extend it to stochastic processes with Poisson jumps to design Actor-Critic (AC) algorithms for continuous-time VWAP targeting execution problem. As for RL algorithms, we have theoretically proven that the value function can be characterized by minimizing the martingale loss function and solving the equation of martingale otrhogonality condition. Instead of using stochastic approximation method, whose convergence may be difficult since we have no knowledge of the equation near the true solution
		, we make some modifications to solve an equivalent minimization problem, also provide a pratical way to choose the test function and take the terminal condition into consideration. Combining policy evaluation and policy improvement, we design two Actor-Critic algorithms: ML-AC and MO-AC to solve the exploratory VWAP targeting execution problem. 
		
		At last, we simulate two different markets and test our algorithms, where the price impacts in the first market are smaller while the terminal penalty is larger. The settings for the second market are completely opposite. In both environments, the convergence of two kinds of learning algorithms to respective optimal policies is verified, with ADP initially exhibiting a relatively stable learning curve leading to increasing returns. As for two RL algorithms, despite some fluctuations at the beginning, the final average return is very close to which obtained by the optimal policy, and the policy learned through MO-AC is a distribution with less variation than ML-AC. Comparing the return gained by TWAP and SAC, the result shows that our methods have many advantages when there are larger price impacts in the market setting.
		
		This paper is organized as follows. In Section \ref{section_classical_problem}, we introduce the execution problem of targeting percentage of market trading speed and the explicit expression of the optimal solution. In Section \ref{section_DP}, we propose an adaptive dynamic programming algorithm to solve this problem. In Section \ref{section_exploratory}, we incorporate exploration into our problem and derive the optimal policy with randomness. Section \ref{section_RL} provides details of the principle of two RL algorithms. Section \ref{section_result} explains how we simulate the market parameters and create market simulators, and then shows some numerical results. At last, we give the conclusions in Section \ref{section_conclusion}.

\section{The problem of targeting percentage of market trading speed}
		\label{section_classical_problem}
		
		Throughout this paper, we use $ C $ with different subscripts to denote a constant that does not depend on the model variables, although the value represented by them may vary in different places. For a matrix $ A $, denote the Frobenius norm of $ A $ as $ |A| $. 
		
		We consider that the agent's execution policy targets a certain percentage of the speed at which other participants in the markets are trading. Our mission is to sell $ \mathfrak{N} $ shares of a stock within the time horizon from 0 to $ T $, where $ T $ is the terminal time of this liquidation problem. $W= \{W_t\}_{0\leq t\leq T} $ is a standard one-dimensional Brownian motion, $ N=\{N_t\}_{0\leq t\leq T} $ is a homogeneous Poisson process with intensity $\lambda$, which is a positive constant, and both are defined in the probability space $ (\Omega,\mathcal{F},\mathbb{P}^{W, N};\{\mathcal{F}_t^{W, N}\}_{0\leq t\leq T}) $. $ \{\mathcal{F}_t^{W, N}\}_{0\leq t\leq T} $ is the natural filtration generated by $ W $ and $ N $. We use $ \mathbb{E}^{\mathbb{P}^{W, N}} $ to indicate the expectation on this probability space. Set $ v=\{v_t\}_{0\leq t\leq T} $ to be the agent's trading policy, where $ v_t $ is the trading speed at $ t $. $ v_t $ belongs to the set of all possible trading speeds $\mathcal{A}$, and $ Q_t^v $ is the inventory which satisfies
		\begin{displaymath}
			\d Q^v_t=-v_t\d t.
		\end{displaymath} 
	Assume that the midprice process of the stock $ S^v_t $ is modeled by an arithmetic Brownian motion, and is affected by the permanent impact from the agent's trading,
	\begin{displaymath}
		\d S^v_t=-bv_t\d t+\sigma \d W_t,
	\end{displaymath}
	where $ b\geq0 $ is the parameter representing the permanent impact. Considering the transaction price $ \hat{S}_t^v $ for the agent, we also need to introduce the temporary impact, which is represented by $ k\geq0 $:
	\begin{displaymath}
		\hat{S}_t^v=S^v_t-kv_t. 
	\end{displaymath}
	Under the trading policy $v $, the cash process of the agent $X ^ v_ t $ satisfies
	\begin{displaymath}
		\d X^v_t=\hat{S}_t^vv_t\d t.
	\end{displaymath}
	The agent aims to determine an optimal liquidation speed, which is a percentage of the market trading speed. Denote the market trading speed by $\mu_t$, 
	\begin{equation}
		\d \mu_t=-\kappa\mu_t\d t+\eta_{1+N_{t-}} \d N_t,
		\label{trading_speed}
	\end{equation} 
	where $ N_t $ is a homogeneous Poisson process and $ \{\eta_1,\eta_2\cdots\}$ are non-negative independent and identically distributed random variables with the same distribution as $\eta$, whose moments are finite. The solution of (\ref{trading_speed}) is 
	\begin{align*}
		\mu_t&=e^{-\kappa t}\mu_0+\int_{0}^t\eta_{1+N_{u-}} dN_u=e^{-\kappa t}\mu_0+\sum_{m=1}^{N_t}e^{-\kappa(t-\tau_m)}\eta_m, 
	\end{align*}
	where $\tau_m$ denotes the stopping time of the $ m^{th} $ jump of the Poisson process. 
	
	The state is denoted by $ \mathbf{Y}^v=\{\mathbf{Y}^v_t\}_{0\leq t\leq T} $ under the policy $ v $, where $ \mathbf{Y}_t^v=(S_t^v,X_t^v,Q_t^v,\mu_t)^{\mathsf{T}} $, and then we represent above equations in vector form: \begin{equation}
		\d\mathbf{Y}_t^v=
		\begin{pmatrix}
			-bv_t\\(S_t^v-kv_t)v_t\\-v_t\\-\kappa\mu_t
		\end{pmatrix}\d t+\begin{pmatrix}
			\sigma\\0\\0\\0
		\end{pmatrix}\d W_t+\begin{pmatrix}
			0\\0\\0\\\eta_{1+N_{t-}}
		\end{pmatrix}\d N_t,
		\label{enviroment}
	\end{equation}
which is formulated by
\begin{displaymath}
	\d \mathbf{Y}_t^v=\mathbf{b}(t,\mathbf{Y}_t^v,v_t)\d t+\bm{\sigma} \d W_t+\bm{\eta}_{1+N_{t-}} \d N_t. 
\end{displaymath}
The value function of the agent consists of two parts. The first part is to maximize terminal wealth: $  X_T^v+Q^v_T(S^v_T-\alpha Q^v_T) $, where $\alpha$ represents the penalty caused by terminal transaction. The second part is to measure difference between the policy $ v $ and the market trading speed times $\rho$. Let the initial state denoted by $ \mathbf{y}=(x,S,\mu,q)^{\mathsf{T}}\in\mathbb{R}^4 $, the terminal function denoted by $ f(\mathbf{Y}^v_T)=X^v_T+Q^v_T(S^v_T-\alpha Q^v_T) $, and the running reward function denoted by $ r(\mathbf{Y}^v_t,v)=-\varphi(v-\rho\mu_t)^2 $. Given a policy $ v $, the corresponding value function is defined by
\begin{equation}
	H^v(t,x,S,\mu,q)=\mathbb{E}^{\mathbb{P}^{W, N}}_{t,x,S,\mu,q}\left[X_T^v+Q^v_T(S^v_T-\alpha Q^v_T)-\int_{t}^{T}\varphi(v_u-\rho\mu_u)^2\d u\right],
	\label{value_function}
\end{equation}
where the subscript represents calculating conditional expectation under the initial state $ (t,x,S,\mu,q) $ and $\varphi$ is the penalty parameter. 

It is easy to verify that the model setups satisfy the following assumptions, which ensure the well-posedness of this stochastic control problem. 
\begin{assumption}
	The following conditions hold true: 
	\begin{enumerate}
		\item $ \mathbf{b}(t,\mathbf{y},v) $ is continuous in its variables;
		\item $ \mathbf{b}(t,\mathbf{y},v) $ is uniformly Lipschitz continuous in $\mathbf{y}$, i.e. for $ \mathbf{y}_1, \mathbf{y}_2\in\mathbb{R}^4, $ $$\exists C>0,\quad |\mathbf{b}(t,\mathbf{y}_1,v)-\mathbf{b}(t,\mathbf{y}_2,v)|\leq C|\mathbf{y}_1-\mathbf{y}_2|,\qquad \forall (t,v)\in[0,T]\times \mathcal{A};$$
		\item $ \mathbf{b}(t,\mathbf{y},v) $ is linear increasing, i.e. for $ \mathbf{y}\in \mathbb{R}^4, $ 
		$$ \exists C>0,\quad |\mathbf{b}(t,\mathbf{y},v)|\leq C(1+|\mathbf{y}|),\qquad \forall (t,v)\in[0,T]\times \mathcal{A};$$
		\item $ r $ and $ f $ are polynomial growth, i.e. for $ \mathbf{y}\in\mathbb{R}^4, v\in\mathcal{A}, $
		$$ \exists C>0,\ p\geq1,\qquad  |r(\mathbf{y},v)|\leq C(1+|v|^p+|\mathbf{y}|^p),\qquad  |f(\mathbf{y})|\leq C(1+|\mathbf{y}|^p) ;$$ 
	\end{enumerate}
	\label{assumption1}
\end{assumption}

The target of the agent is to solve the following stochastic control problem: 
\begin{displaymath}
	\sup\limits_{v\in\mathcal{A}}\mathbb{E}^{\mathbb{P}^{W, N}}_{t,x,S,\mu,q}\left[X_T^v+Q^v_T(S^v_T-\alpha Q^v_T)-\int_{t}^{T}\varphi(v_u-\rho\mu_u)^2\d u\right],
\end{displaymath}
and the optimal value function is denoted by $ H(t,x,S,\mu,q) $. Applying the method of dynamic programming, we know that $H$ should satisfy the following PDE:
\begin{equation}
	\begin{aligned}
		0=\bigg(\partial_t+\frac{1}{2}\sigma^2\partial_{SS}&+\mathcal{L}^\mu\bigg) H\\
		&+\sup\limits_{v}\left\{(S-kv)v\partial_xH-v\partial_qH-bv\partial_SH-\varphi(v-\rho\mu)^2\right\},
	\end{aligned}
	\label{HJB}
\end{equation}
where $ \mathcal{L}^\mu $ is the infinitesimal generator of the Poisson process $ \mu $, which is
\begin{displaymath}
	\mathcal{L}^\mu H(t,x,S,\mu,q)=-\kappa\mu\partial_\mu H+\lambda\mathbb{E}[H(t,x,S,\mu+\eta,q)-H(t,x,S,\mu,q)], 
\end{displaymath} 
and the expectation is with respect to the random variable $\eta$. The terminal condition of $ H $ is 
\begin{displaymath}
	H(T,x,S,\mu,q)=x+q(S-\alpha q). 
\end{displaymath}
Observe that the item inside the curly brackets of (\ref{HJB}) is quadratic in $ v $, so we can obtain the optimal policy
\begin{equation}
	v^*=\dfrac{S\partial_xH-\partial_qH-b\partial_SH+2\varphi\rho\mu}{2(k\partial_xH+\varphi)}. 
	\label{best_speed}
\end{equation}
Cartea et al. \cite{cartea2016closed} found the explicit expression of the optimal policy
\begin{equation}
	v^*=\frac{1}{k+\varphi}\left\{\left[\varphi\rho\mu-\frac{1}{2}h_1(t,\mu)\right]-\left[\frac{1}{2}b+h_2(t,\mu)\right]q\right\}, 
	\label{best_speed_h}
\end{equation}
where 
\begin{align}
	&h_2(t,\mu)=-\left(\dfrac{T-t}{k+\varphi}+\frac{1}{\alpha-\frac{1}{2}b}\right)^{-1}-\frac{1}{2}b,\quad\ \  h_1(t,\mu)=l_0(t)+l_1(t)\mu,\nonumber\\
	&l_1(t)=2\varphi\rho((T-t)+\zeta)^{-1}\dfrac{1-e^{-\kappa(T-t)}}{\kappa},\qquad  \zeta=\dfrac{k+\varphi}{\alpha-\frac{1}{2}b},\nonumber\\
	&	l_0(t)=2\varphi\rho\lambda\mathbb{E}[\eta]((T-t)+\zeta)^{-1}\dfrac{e^{-\kappa(T-t)}-1+\kappa(T-t)}{\kappa^2}. \nonumber
\end{align}
The traditional method to approximate the optimal policy is estimating parameters needed in (\ref{best_speed_h}), but it's not easy because there is no explicit expression of the likelihood function for parameters in (\ref{trading_speed}). From another point of view, we consider the HJB (\ref{HJB}) directly and use dynamic programming method to solve it. The biggest problem caused by dynamic programming is the curse of dimensionality \cite{bellman1966dynamic}, which means in order to discretize on each dimension of the state space and obtain sufficient samples to approximate expectations, the amount of data required usually increases exponentially with the increase of dimensionality. 

\section{Adaptive dynamic programming}
\label{section_DP}
To overcome the curse of dimensionality in dynamic programming, Werbos first proposed the framework of adaptive dynamic programming (ADP) in 1977 \cite{werbos1977advanced}. The main idea is to use different function approximation structures to estimate the value function and policy. Instead of solving the problem point by point, we can solve it in time order, which greatly reduces the computational and storage requirements. In 1992, Werbos provided four basic algorithm structures of ADP: heuristic dynamic programming(HDP), dual heuristic dynamic programming (DHP), action-dependent HDP (ADHDP) and action-dependent DHP (ADDHP) \cite{werbos1992approximate}. 

We choose ADHDP, one of the most widely-used, to solve VWAP targeting execution problem, which is the first time ADP has been used in this problem as far as we know. Action-dependent (AD) means the action is a part of inputs to the critic network, and the algorithm consists of three main components: environment, action network, and critic network. The action network receives the current state and then provides the approximate optimal control, while the critic network is applied to approximate the optimal value function and gives feedback to the action network through the HJB equation. The complete algorithm is shown in Algorithm \ref{ADHDP}.

The action network is denoted by $ NN^{a} $ and the critic network is denoted by $ NN^{c} $, with the critic network being updated by the Monte Carlo method. Let's divide $ [0,T] $ into $ t_0, t_1,..., t_N $ and denote the time step by $ h $. Starting at $ t_0 $, we input the current state into the action network and get the action $ v_0 $ as feedback. Then after interacting with the environment, we gain the reward $ r_0 $ and the state $ \mathbf{Y}_{t_1} $, and input them into the action network again. The terminal reward $ f(\mathbf{Y}_T) $ is obtained until $ T $ by repeating the process. According to (\ref{value_function}), the approximate value at $ t_i $ given by the critic network $ NN^c(\mathbf{Y}_{t_i}) $ should minimize the following loss function: 
\begin{displaymath}
	e_i^c=\left|(r_{t_i}+r_{t_{i+1}}+...+r_{t_{N-1}})h+f(\mathbf{Y}_T)-NN^c(\mathbf{Y}_{t_i}) \right|^2. 
\end{displaymath}
Considering the terminal condition $ f(\mathbf{Y}_T) $, the loss function for updating the critic network is
\begin{equation}
	E^c=\sum\limits_{i=0}^{N-1}e_i^c+\left|f(\mathbf{Y}_T)-NN^c(\mathbf{Y}_{T})\right|^2. 
	\label{critic_loss_function}
\end{equation}

As for the action network, the optimal policy must satisfy the first order condition, which means the optimal trading speed satisfies the following relationship with the optimal value function according to (\ref{best_speed}):
\begin{displaymath}
	v^*=\dfrac{S\partial_xH-\partial_qH-b\partial_SH+2\varphi\rho\mu}{2(k\partial_xH+\varphi)}. 
\end{displaymath}
From the above equation, we only need the parameter estimation for $ b $ and $ k $, which is relatively easy since we know the likelihood functions. Meanwhile, we are able to skip the estimation of the parameters involved in the market trading speed. We denote estimation results by $ \hat{b} $ and $\hat{k}$, which will be introduced in Subsection \ref{param_estimation}. By applying the automatic differentiation technique of neural networks, the loss function to update the action network is
\begin{equation}
	E^a=\sum\limits_{i=0}^{N-1}\left|NN^a(\mathbf{Y}_{t_i})-\hat{v}_i^a\right|^2 \label{actor_loss_function},
\end{equation}
where $
\hat{v}_i^a=\dfrac{S_{t_i}\partial_xNN^c(\mathbf{Y}_{t_i})-\partial_qNN^c(\mathbf{Y}_{t_i})-\hat{b}\partial_SNN^c(\mathbf{Y}_{t_i})+2\varphi\rho\mu_{t_i}}{2(\hat{k}\partial_xNN^c(\mathbf{Y}_{t_i})+\varphi)} $. 

\begin{algorithm}
	\caption{Adaptive dynamic programming (ADP)}
	\label{ADHDP}
	\begin{algorithmic}[1]
		\renewcommand{\algorithmicrequire}{\textbf{Input}}
		\State {Environment $ Env $, initial state $ \mathbf{y}_0=(X_0,S_0,\mu_0,\mathfrak{N}) $, terminal time $ T $, time step $ h $, number of time
			grids $ G $, penalty parameter $\varphi$, number of episodes $ M $; }
		\State{Initialize action network $ NN^a $ and critic network $ NN^c $; }
		\For{$ m $=$ 1:M $}
		\For{$ i=0:G-1 $}
		\State {At time $ t_i=ih $, input current state $ \mathbf{Y}_{t_i} $ into $ NN^a $ to get action $ v_{t_i} $; }
		\State{Compute the value of current state $ NN^c(\mathbf{Y}_{t_i}) $; }
		\State{Compute the derivative of $ NN^c(\mathbf{Y}_{t_i}) $ with respect to $ \mathbf{Y}_{t_i} $; }
		\State{Take the action $v_{t_i}$, obtain new state $ \mathbf{Y}_{t_{i+1}} $ and receive the reward $ r_{t_i} $ from $ Env $; }
		\EndFor
		\State{At time $ T $, clear all inventory and obtain the terminal reward $ f(\mathbf{Y}_{T}) $; }
		\State{Update the critic network with the loss function $ E^c $ by (\ref{critic_loss_function}); }
		\State{Update the action network with the loss function $ E^a $ by (\ref{actor_loss_function}); }
		\EndFor
	\end{algorithmic}
\end{algorithm}
The above algorithm is also used to learn a stochastic policy, because we similarly derive the relationship between the optimal policy and the value function of the exploratory problem, which will be shown in Theorem \ref{thm2}, and we demonstrate the results in Section \ref{section_result}. The algorithm in the exploratory setting is hereafter referred to simply as exploratory ADP. 

\section{Exploratory liquidation problem}
\label{section_exploratory}
Referring to the framework of RL in continuous time and space created Jia and Zhou in \cite{jia2022policy,jia2022policy2}, we aim to identify a policy that incorporates randomness to encourage exploration. Let $ \mathcal{P}(\mathcal{A}) $ be a collection of probability density functions on the action space $ \mathcal{A}\subset\mathbb{R} $. Specifically, $ \pi=\{\pi_t(\cdot)=\pi(\cdot\mid\mathcal{F}_t^{W,N})\}_{0\leq t\leq T} $ is a policy, where $ \pi_t\in\mathcal{P}(\mathcal{A}) $ and $ v^\pi_t $ is the action determined by $ \pi_t $. Assuming the randomness is modeled by the random variable $ Z $, which is independent from $ W $ and $ N $, so we expand the original filtered probability space to $ (\Omega,\mathcal{F},\mathbb{P},\{\mathcal{F}_t\}_{0\leq t\leq T})$, where $\mathcal{F}_t=\mathcal{F}^{W,N}_t\vee\sigma(Z)$ and $ \mathbb{P} $ is a probability measure on $ \mathcal{F} $. Keeping the same notation as before, we denoted $ \mathbb{E}^{\mathbb{P}} $ as the expectation on $ (\Omega,\mathcal{F},\mathbb{P},\{\mathcal{F}_t\}_{0\leq t\leq T})$. 

\subsection{Problem formulation}
Given a policy $\pi$, let's replace the superscript $ v $ in the original problem with $\pi$ to represent the state $ \mathbf{Y}^\pi=\{\mathbf{Y}^\pi_t\}_{0\leq t\leq T} $, which follows
\begin{equation*}
	\d \mathbf{Y}_t^\pi=\mathbf{b}(t,\mathbf{Y}_t^\pi,v_t^\pi)\d t+\bm{\sigma} \d W_t+\bm{\eta}_{1+N_{t-}} \d N_t.
\end{equation*}
Based on deduction in \cite{wang2020reinforcement}, we get an exploratory version of the previous problem, and $ \mathbf{Y}^\pi $ has the same distribution as $\tilde{\mathbf{Y}}^\pi=\{\tilde{\mathbf{Y}}_t^\pi\}_{0\leq t\leq T}$, where $ \tilde{\mathbf{Y}}_t^\pi=(\tilde{S}_t^\pi,\tilde{X}_t^\pi,\tilde{Q}_t^\pi,\mu_t)^{\mathsf{T}}$ satisfies
\begin{equation}
	\begin{aligned}
		&\d \tilde{Q}^\pi_t=-\int_{\mathcal{A}}v\pi_t(v)\d v\d t,\\
		&\d \tilde{S}^\pi_t=-b\int_{\mathcal{A}}v\pi_t(v)\d v\d t+\sigma \d W_t,\\
		&\d \tilde{X}^\pi_t=\int_{\mathcal{A}}(S^\pi_t-kv)v\pi_t(v)\d v\d t, 
	\end{aligned}
	\label{ex_equation}
\end{equation}
and $\mu_t$ satisfies the same SDE (\ref{trading_speed}) due to the assumption that the market trading speed is not influenced by agents. In fact,  $ \mathbf{Y}^\pi $ are sample trajectories generated by policy $ \pi $, while $ \tilde{\mathbf{Y}}^\pi $ are the average over $\mathcal{A}$. We represent $ \tilde{\mathbf{Y}}_t^\pi $ in vector form:
\begin{equation}
	\d \tilde{\mathbf{Y}}_t^\pi=
	\begin{pmatrix}
		-b\int_{\mathcal{A}}v\pi_t(v)\d v\\
		\int_{\mathcal{A}}(\tilde{S}^\pi_t-kv)v\pi_t(v)\d v\\
		-\int_{\mathcal{A}}v\pi_t(v)\d v\\-\kappa\mu_t
	\end{pmatrix}\d t+\begin{pmatrix}
		\sigma\\0\\0\\0
	\end{pmatrix}\d W_t+\begin{pmatrix}
		0\\0\\0\\\eta_{1+N_{t-}}
	\end{pmatrix}\d N_t,
	\label{ex_equation_vec}
\end{equation}
which is abbreviated as
\begin{equation*}
	\d \tilde{\mathbf{Y}}_t^\pi=\tilde{\mathbf{b}}\left(t,\tilde{\mathbf{Y}}_t^\pi,\pi\left(\cdot\mid t,\tilde{\mathbf{Y}}_t^\pi\right)\right)\d t+\bm{\sigma} \d W_t+\bm{\eta}_{1+N_{t-}} \d N_t. 
\end{equation*}

We add a regularizer
into the value function to measure the exploration of the policy $ \pi $, which is 
\begin{equation*}
	\mathcal{H}(\pi):=-\int_{0}^{T}\int_{\mathcal{A}}\pi_t(v)\log\pi_t(v)\d v\d t. 
\end{equation*}
Let $ \gamma\geq0 $ represent the level of encouragement for exploration, which is referred to the temperature parameter in RL. Let the initial state be $ \mathbf{y}=(x,S,\mu,q)^{\mathsf{T}} $, so the optimization problem becomes
\begin{equation}
	\label{explore_optimal_problem}
	\begin{aligned}
		\sup\limits_{\pi\in\Pi}\mathbb{E}^{\mathbb{P}}_{t,\mathbf{y}}\bigg[f(\mathbf{Y}_T^\pi)-\int_{t}^{T}(\varphi(v^\pi_u-\mu_u)^2	+\gamma\log\pi_u(v^\pi_u))\pi_u(v^\pi_u)\d u\bigg], 
	\end{aligned}
\end{equation}
where $ \Pi $ is the set of all admissible policies and $ f(\mathbf{Y}_T^\pi)=X^\pi_T+Q^\pi_T(S^\pi_T-\alpha Q^\pi_T) $. Referring to \cite{jia2022policy2}, we give the precise definition of admissible policies. 

\begin{definition}
	A policy $ \pi $ is called admissible, if it satisfies:  
	\begin{enumerate}
		\item $ \pi\left(\cdot\mid t,\mathbf{y}\right) $ is a probability distribution on action space $\mathcal{A}$ and $ \pi\left(\cdot\mid t,\mathbf{y}\right)\in\mathcal{F}_t $; 
		\item For any initial $ \left(t,\mathbf{y}\right) \in [0,T]\times\mathbb{R}^4 $,  (\ref{ex_equation}) admits a unique weak solution; 
		\item $ \pi\left(\cdot\mid t,\mathbf{y}\right) $ is continuous in $ \left(t,\mathbf{y}\right) $, and uniformly Lipschitz continuous in $ \mathbf{y} $, i.e. for all $ \mathbf{y}, \mathbf{y}'\in\mathbb{R}^4 $, 
		\begin{align*} \lim\limits_{(t,\mathbf{y})\rightarrow(t',\mathbf{y}')}&\int_{\mathcal{A}}|\pi(v\mid t,\mathbf{y})-\pi(v\mid t',\mathbf{y}')|\d v=0\\
			\exists C>0,\qquad &\int_{\mathcal{A}}|\pi\left(v\mid t,\mathbf{y}\right)-\pi\left(v\mid t,\mathbf{y}'\right)|\d v\leq C|\mathbf{y}-\mathbf{y}'|. 
		\end{align*}
		\item $ \exists C>0,\ p\geq1 $, such that for all $ \left(t,\mathbf{y}\right)\in[0,T]\times\mathbb{R}^4 $, 
		\begin{equation*}
			\int_{\mathcal{A}}|\varphi(v-\mu)^2+\gamma\log\pi\left(v\mid t,\mathbf{y}\right)|\pi\left(v\mid t,\mathbf{y}\right)\d v\leq C(1+|\mathbf{y}|^p). 
		\end{equation*}
	\end{enumerate}
	\label{admi_def}
\end{definition}
Given an admissible policy $ \pi $, the related value function is defined as 
\begin{equation}
	\begin{aligned}
		V^\pi\left(t,\mathbf{y}\right)&=	\mathbb{E}^{\mathbb{P}}_{t,\mathbf{y}}\bigg[f(\mathbf{Y}_T^\pi)-\int_{t}^{T}(\varphi(v^\pi_u-\mu_u)^2+\gamma\log\pi_u(v^\pi_u))\pi_u(v^\pi_u)\d u\bigg]\\
		&=\mathbb{E}^{\mathbb{P}^{W,N}}_{t,\mathbf{y}}\bigg[f(\tilde{\mathbf{Y}}_T^\pi)-\int_{t}^{T}\int_{\mathcal{A}}(\varphi(v-\mu_u)^2+\gamma\log\pi_u(v))\pi_u(v)\d v\d u\bigg], 
	\end{aligned}\label{value_V}
\end{equation}
in which the second equation holds because $ \mathbf{Y}^\pi $ and $ \tilde{\mathbf{Y}}^\pi  $ have the same distribution, and the optimal value function is denoted by $ V\left(t,\mathbf{y}\right)=\sup\limits_{\pi\in\Pi}	V^\pi\left(t,\mathbf{y}\right).$
The following theorem guarantees the well-posedness of our exploratory problem. 
\begin{theorem}
	\label{thm:growth_control}
	If Assumption \ref{assumption1} holds, given an admissible policy $\pi$,  (\ref{ex_equation_vec}) admits a unique strong solution. Furthermore, if $p\geq2$, then there exists $ C=C(p) $ satisfying \begin{equation}
		\mathbb{E}^{\mathbb{P}^{W,N}}\left.\left[\max\limits_{t\leq s\leq T}\left|\tilde{\mathbf{Y}}_s^\pi\right|^p\right|\tilde{\mathbf{Y}}_t^\pi=\mathbf{y}\right]\leq C(1+\left|\mathbf{y}\right|^p). 
		\label{grow_condition}
	\end{equation}
	And the value function (\ref{value_V}) is finite. 
\end{theorem}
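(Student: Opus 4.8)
The plan is to establish the three claims in the order stated: strong well-posedness of (\ref{ex_equation_vec}), the moment bound (\ref{grow_condition}), and finiteness of $V^\pi$. The key structural observations are that the diffusion coefficient $\bm{\sigma}$ and the jump coefficient $\bm{\eta}_{1+N_{t-}}$ are state-independent (the latter i.i.d.\ with finite moments by hypothesis), so all the analytic work concerns the effective drift $\mathbf{y}\mapsto\tilde{\mathbf{b}}(t,\mathbf{y},\pi(\cdot\mid t,\mathbf{y}))$; moreover the $\mu$-component of (\ref{ex_equation_vec}) is autonomous and coincides with (\ref{trading_speed}), whose explicit solution already has finite moments of every order. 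Writing the effective drift as the policy-average $\int_{\mathcal{A}}\mathbf{b}(t,\mathbf{y},v)\,\pi(v\mid t,\mathbf{y})\,\d v$ of the original drift, I would first verify that this map is Lipschitz continuous and of linear growth in $\mathbf{y}$, and then invoke a standard existence–uniqueness theorem for SDEs with jumps and additive noise. Because the noise is additive, this reduces cleanly to a pathwise random ODE with Lipschitz right-hand side (after subtracting the given c\`adl\`ag process $\bm{\sigma}W_t+\sum_{m=1}^{N_t}\bm{\eta}_m$), solved by Picard iteration; this is consistent with the weak solution already posited in Definition \ref{admi_def}(2), which pathwise uniqueness upgrades to a strong one.

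For the regularity of the effective drift I would split, for $\mathbf{y}_1,\mathbf{y}_2$,
\begin{align*}
\tilde{\mathbf{b}}(t,\mathbf{y}_1,\pi(\cdot\mid t,\mathbf{y}_1))-\tilde{\mathbf{b}}(t,\mathbf{y}_2,\pi(\cdot\mid t,\mathbf{y}_2))
&=\int_{\mathcal{A}}\big[\mathbf{b}(t,\mathbf{y}_1,v)-\mathbf{b}(t,\mathbf{y}_2,v)\big]\pi(v\mid t,\mathbf{y}_1)\,\d v\\
&\quad+\int_{\mathcal{A}}\mathbf{b}(t,\mathbf{y}_2,v)\big[\pi(v\mid t,\mathbf{y}_1)-\pi(v\mid t,\mathbf{y}_2)\big]\,\d v.
\end{align*}
The first integral is controlled by $C|\mathbf{y}_1-\mathbf{y}_2|$ directly from the uniform Lipschitz bound in Assumption \ref{assumption1}(2), since $\pi(\cdot\mid t,\mathbf{y}_1)$ integrates to one. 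The second integral is the delicate one, because $\mathbf{b}(t,\mathbf{y},v)$ grows quadratically in the action $v$ (through the $\tilde{X}$-component $(S-kv)v$), so it cannot be dominated by the total-variation bound of Definition \ref{admi_def}(3) alone; here I would use the admissibility conditions to control the first and second moments $\int v\,\pi(v\mid t,\mathbf{y})\,\d v$ and $\int v^2\,\pi(v\mid t,\mathbf{y})\,\d v$ and their dependence on $\mathbf{y}$, noting that Definition \ref{admi_def}(4) bounds $\int_{\mathcal{A}}(v-\mu)^2\pi(v\mid t,\mathbf{y})\,\d v$ by $C(1+|\mathbf{y}|^p)$ up to the entropy term and so yields the linear-growth estimate for the drift. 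I expect this term---reconciling the quadratic growth of $\mathbf{b}$ in $v$ and the bilinear $\tilde{S}\int v\pi\,\d v$ contribution with the mere total-variation regularity of the policy---to be the main obstacle, and it is precisely what the conditions in Definition \ref{admi_def} are designed to absorb.

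With well-posedness in hand, the moment bound (\ref{grow_condition}) is a standard estimate: apply the It\^o--L\'evy formula to $|\tilde{\mathbf{Y}}_s^\pi|^p$, take the running supremum and expectation, bound the drift term by linear growth, and apply the Burkholder--Davis--Gundy inequality to the Brownian martingale together with Kunita's inequality for the compensated Poisson martingale (invoking the finite moments of $\eta$). This produces $\mathbb{E}[\max_{t\le s\le u}|\tilde{\mathbf{Y}}_s^\pi|^p]\le C(1+|\mathbf{y}|^p)+C\int_t^u\mathbb{E}[\max_{t\le r\le s}|\tilde{\mathbf{Y}}_r^\pi|^p]\,\d s$, to which Gronwall's lemma gives (\ref{grow_condition}), first along a localizing sequence of stopping times and then in the limit by Fatou. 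Finally, for finiteness of $V^\pi$ in (\ref{value_V}) I would treat the terminal and running terms separately: by Assumption \ref{assumption1}(4) and the moment bound (taking the exponent in (\ref{grow_condition}) to be $\max(p,2)$) one has $\mathbb{E}|f(\tilde{\mathbf{Y}}_T^\pi)|\le C(1+|\mathbf{y}|^p)$, while Definition \ref{admi_def}(4) bounds the running integrand in expectation by $C(1+\mathbb{E}|\tilde{\mathbf{Y}}_u^\pi|^p)\le C(1+|\mathbf{y}|^p)$, so integrating over $[t,T]$ keeps everything finite. Combining the two via the triangle inequality gives $|V^\pi(t,\mathbf{y})|\le C(1+|\mathbf{y}|^p)<\infty$.
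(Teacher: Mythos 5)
Your proposal follows essentially the same architecture as the paper's proof: show the policy-averaged drift inherits (local) Lipschitz continuity and linear growth from Assumption \ref{assumption1} and Definition \ref{admi_def}, invoke standard SDE theory for strong well-posedness (the diffusion and jump coefficients being state-independent), obtain the moment bound via a supremum estimate plus Gronwall, and deduce finiteness of $V^\pi$ by combining (\ref{grow_condition}) with the polynomial-growth conditions. In outline it is correct, but two points deserve comment.

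The step you single out as ``the main obstacle'' --- the term $\int_{\mathcal{A}}\mathbf{b}(t,\mathbf{y}_2,v)\left[\pi(v\mid t,\mathbf{y}_1)-\pi(v\mid t,\mathbf{y}_2)\right]\d v$ --- is in fact immediate under the theorem's hypotheses, and the patch you propose for it would not work. The theorem assumes Assumption \ref{assumption1}, in which the Lipschitz and linear-growth bounds on $\mathbf{b}$ are \emph{uniform} in $v\in\mathcal{A}$. Writing $\mathbf{b}(t,\mathbf{y}_2,v)=\left[\mathbf{b}(t,\mathbf{y}_2,v)-\mathbf{b}(t,\mathbf{0},v)\right]+\mathbf{b}(t,\mathbf{0},v)$, one gets $|\mathbf{b}(t,\mathbf{y}_2,v)|\leq C_1|\mathbf{y}_2|+C_2$ uniformly in $v$, so the total-variation bound of Definition \ref{admi_def}(3) alone dominates this term by $(C_1|\mathbf{y}_2|+C_2)\,C_3|\mathbf{y}_1-\mathbf{y}_2|$; this three-term split is exactly what the paper does, and it yields a \emph{locally} Lipschitz drift (constant growing linearly in $|\mathbf{y}|$), which together with linear growth suffices --- note that global Lipschitz continuity, which your plan initially announces, is not attainable here. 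Your instinct that the quadratic dependence of $\mathbf{b}$ on $v$ causes trouble is really a criticism of whether the concrete model satisfies Assumption \ref{assumption1} (it does only if $\mathcal{A}$ is bounded or the assumption is read loosely), not an issue inside the proof, which takes the assumption as given. Moreover, Definition \ref{admi_def}(4) bounds the second moment of each policy $\pi(\cdot\mid t,\mathbf{y}_i)$ separately, and only up to the entropy term; it provides no control of the $v$-weighted total variation of the \emph{difference} $\pi(\cdot\mid t,\mathbf{y}_1)-\pi(\cdot\mid t,\mathbf{y}_2)$ in terms of $|\mathbf{y}_1-\mathbf{y}_2|$, so it cannot substitute for the uniform-in-$v$ bounds in this step; fortunately it is not needed.

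On the moment bound, your It\^o--L\'evy/BDG/Kunita route with localization is valid but heavier than necessary: since the noise is additive, the paper simply takes the supremum in the integral form of (\ref{ex_equation_vec}) and bounds the three contributions directly --- the drift integral by linear growth, $\max_{t\leq s\leq T}|W_s-W_t|^p$ by Gaussian maximal moments, and the jump part by $\left|\sum_{m}\bm{\eta}_m\right|^p$ using the finite moments of $\eta$ --- before applying Gronwall, with no stopping times or martingale inequalities. Both arguments reach (\ref{grow_condition}); the paper's is the more elementary one.
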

\begin{proof}
	First, according to the definition of $ \tilde{\mathbf{b}} $, we have
	\begin{equation*}\resizebox{.99\hsize}{!}{
			$ \tilde{\mathbf{b}}(t,\mathbf{y},\pi(\cdot\mid t,\mathbf{y}))-\tilde{\mathbf{b}}(t,\mathbf{y}',\pi(\cdot\mid t,\mathbf{y}'))=\int_{\mathcal{A}}\left[\pi(\cdot\mid t,\mathbf{y})\mathbf{b}(t,\mathbf{y},v)-\pi(\cdot\mid t,\mathbf{y}')\mathbf{b}(t,\mathbf{y}',v)\right]\d v.  $}
	\end{equation*}
	Due to the linear growth property and uniform Lipschitz continuity of $ \mathbf{b} $, we get the following estimation, 
	\begin{align*}
		&|\tilde{\mathbf{b}}(t,\mathbf{y},\pi(\cdot \mid t,\mathbf{y}))-\tilde{\mathbf{b}}(t,\mathbf{y}',\pi(\cdot\mid t,\mathbf{y}'))|\\
		\leq&\int_{\mathcal{A}}|\pi(\cdot\mid t,\mathbf{y})-\pi(\cdot\mid t,\mathbf{y}')||\mathbf{b}(t,\mathbf{y},v)-\mathbf{b}(t,\mathbf{0},v)|\d v\\
		&+\int_{\mathcal{A}}|\mathbf{b}(t,\mathbf{0},v)||\pi(\cdot\mid t,\mathbf{y})-\pi(\cdot\mid t,\mathbf{y}')|\d v+\int_{\mathcal{A}}|\mathbf{b}(t,\mathbf{y},v)-\mathbf{b}(t,\mathbf{y}',v)|\pi(\cdot\mid t,\mathbf{y}')\d v\\
		\leq& (C_1|\mathbf{y}|+C_2)\int_{\mathcal{A}}|\pi(\cdot\mid t,\mathbf{y})-\pi(\cdot\mid t,\mathbf{y}')|\d v+C_1|\mathbf{y}-\mathbf{y}'|\\
		\leq& C_1|\mathbf{y}-\mathbf{y}'|+(C_1|\mathbf{y}|+C_2)C_3|\mathbf{y}-\mathbf{y}'|. 
	\end{align*}
	That is to say $\tilde{\mathbf{b}}$ is locally Lipschitz continuous. The standard SDE theory guarantees that the assumption of the Lipschitz condition in Assumption \ref{assumption1} can be relaxed to the local Lipschitz condition. Using the linear growth property of $\mathbf{b}$ again, we have \begin{align*}
		|\tilde{\mathbf{b}}(t,\mathbf{y},\pi(\cdot \mid t,\mathbf{y}))|\leq \int_{\mathcal{A}}(C_1|\mathbf{y}|+C_2)\pi(v|t,\mathbf{y})\d v=C_1|\mathbf{y}|+C_2, 
	\end{align*}
	so (\ref{ex_equation_vec}) also admits a unique strong solution. Reform (\ref{ex_equation_vec}) in integral form \begin{equation*}
		\tilde{\mathbf{Y}}^\pi_s=\mathbf{y}+\int_{t}^{s}\tilde{\mathbf{b}}\left(u,\tilde{\mathbf{Y}}_u^\pi,\pi_u\right)\d u+\bm{\sigma} (W_s-W_t)+\int_t^s\bm{\eta}_{1+N_u^-}\d N_u,
	\end{equation*}
	and then we obtain the following estimation,  \begin{align*}
		&\mathbb{E}^{\mathbb{P}^{W,N}}\left.\left[\max\limits_{t\leq s\leq T}\left|\tilde{\mathbf{Y}}_s^\pi\right|^p\right|\tilde{\mathbf{Y}}_t^\pi=\mathbf{y}\right]\\
		\leq& C_1\mathbb{E}^{\mathbb{P}^{W,N}}\left[|\mathbf{y}|^p+\max\limits_{t\leq s\leq T}\left|\int_{t}^s\tilde{\mathbf{b}}(u,\tilde{\mathbf{Y}}_u^\pi,\pi(\cdot\mid u,\tilde{\mathbf{Y}}^\pi_u))\d u\right|^p\right.\\
		&\qquad\qquad+\sigma\max\limits_{t\leq s\leq T}|W_s-W_t|^p+\left|\sum_{m=N_t+1}^{N_T}\bm{\eta}_m\right|^p
		\bigg|\tilde{\mathbf{Y}}_t^\pi=\mathbf{y}\bigg]\\
		\leq& C_1\mathbb{E}^{\mathbb{P}^{W,N}}\left[\left.|\mathbf{y}|^p+C_2\int_{t}^{T}\left(1+\max\limits_{t\leq s\leq u}\left|\tilde{\mathbf{Y}}_s^\pi\right|\right)^p \d u\right|\tilde{\mathbf{Y}}_t^\pi=\mathbf{y}\right]+C_3\\
		\leq& C_4(1+|\mathbf{y}|^p)+C_5\int_{t}^{T}\mathbb{E}^{\mathbb{P}^{W,N}}\left[\left.\max\limits_{t\leq s\leq u}\left|\tilde{\mathbf{Y}}_s^\pi\right|^p\right|\tilde{\mathbf{Y}}_t^\pi=\mathbf{y}\right]\d u, 
	\end{align*}
	where the second inequality holds because of the linear growth of $\tilde{\mathbf{b}}$, $ W_s-W_t\sim\mathcal{N}(0,s-t) $ and the finite moment of $ \bm{\eta}_m $ for all $ m\in\mathbb{Z}^+ $. 
	Then we get (\ref{grow_condition}) by applying the Gronwall's inequality, and combine this result with Definition \ref{admi_def} to know that $ V^\pi(t,\mathbf{y}) $ is finite. 
\end{proof}
\subsection{The expression of the optimal policy}
Using Bellman's principle of optimality and standard methods for stochastic control problems, we know that the value function $ V $ satisfies the following HJB equation
\begin{equation}
	\label{HJB_V}
	\begin{aligned}
		0=\bigg(\partial_t+\frac{1}{2}\sigma^2\partial_{SS}&+\mathcal{L}^\mu\bigg)V+\sup\limits_{\pi\in\Pi}\bigg\{
		\int_{\mathcal{A}}[(S-kv)v\partial_xV\\
		&-v\partial_qV-bv\partial_SV-\gamma\log\pi_t(v)-\varphi(v-\rho\mu)^2]\pi_t(v)\d v\bigg\},
	\end{aligned}
\end{equation}
with the terminal condition
\begin{equation}
	V(T,x,S,\mu,q)=x+q(S-\alpha q). 
	\label{terminal_V}
\end{equation}
The following theorem provides the formula of the optimal control for the HJB (\ref{HJB_V}).
\begin{theorem}
	\label{thm2}
	The policy satisfying (\ref{HJB_V}) and the terminal condition (\ref{terminal_V}) is \begin{equation}
		\pi^*(v\mid t,\mathbf{y})=\mathcal{N}\left(\dfrac{-w_1(t,\mu)-2w_2(t,\mu)\mu-bq+2\varphi\rho\mu}{2(\varphi+k)},\dfrac{\gamma}{2(\varphi+k)}\right),
		\label{pi*_normal}
	\end{equation}
	which is a Gaussian distribution with the mean value $ \dfrac{-w_1(t,\mu)-2w_2(t,\mu)\mu-bq+2\varphi\rho\mu}{2(\varphi+k)} $ and the variance $ \dfrac{\gamma}{2(\varphi+k)} $. Here are the expressions of parameters,  
	\begin{equation*}
		\begin{aligned}
			&w_2(t,\mu)=-\left(\dfrac{T-t}{k+\varphi}+\dfrac{1}{\alpha-\frac{1}{2}b}\right)^{-1}-\dfrac{1}{2}b,\quad w_1(t,\mu)=l_0(t)+l_1(t)\mu,\\
			&l_1(t)=2\varphi\rho\left((T-t)+\frac{k+\varphi}{\alpha-\frac{1}{2}b}\right)^{-1}\dfrac{1-e^{-\kappa(T-t)}}{\kappa},\\
			&l_0(t)=2\varphi\rho\lambda\mathbb{E}[\eta]\left((T-t)+\frac{k+\varphi}{\alpha-\frac{1}{2}b}\right)^{-1}\dfrac{e^{-\kappa(T-t)}-1+\kappa(T-t)}{\kappa^2}. 
		\end{aligned}
	\end{equation*}
\end{theorem}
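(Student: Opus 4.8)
The plan is to combine the entropy-regularized pointwise maximization that is standard in continuous-time reinforcement learning with a guess-and-verify ansatz for $V$. The first step is to treat the inner supremum in (\ref{HJB_V}) as a variational problem over densities $\pi_t(\cdot)\in\mathcal{P}(\mathcal{A})$ with $V$ held fixed. Writing $g(v):=(S-kv)v\partial_xV-v\partial_qV-bv\partial_SV-\varphi(v-\rho\mu)^2$, the bracketed objective is $\int_{\mathcal{A}}[g(v)-\gamma\log\pi_t(v)]\pi_t(v)\d v$. I would rewrite this as $\gamma\log\int_{\mathcal{A}}e^{g(v)/\gamma}\d v-\gamma\,\mathrm{KL}(\pi_t\,\|\,\pi^*_t)$, where $\pi^*_t(v)\propto e^{g(v)/\gamma}$; since the Kullback–Leibler divergence is nonnegative and vanishes only at $\pi_t=\pi^*_t$, the supremum is attained exactly at this Gibbs measure. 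Because $g$ is quadratic in $v$ with leading coefficient $-(k\partial_xV+\varphi)$, the Gibbs measure is Gaussian with mean $\frac{S\partial_xV-\partial_qV-b\partial_SV+2\varphi\rho\mu}{2(k\partial_xV+\varphi)}$ (recovering (\ref{best_speed})) and variance $\frac{\gamma}{2(k\partial_xV+\varphi)}$, provided $k\partial_xV+\varphi>0$ so that the normalizing integral converges.

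To pin down these coefficients explicitly I would impose the polynomial ansatz
$$V(t,x,S,\mu,q)=x+qS+w_2(t,\mu)q^2+w_1(t,\mu)q+w_0(t,\mu),$$
which is consistent with the terminal condition (\ref{terminal_V}) because $f(\mathbf{Y}_T^\pi)=x+qS-\alpha q^2$. This ansatz gives $\partial_xV=1$, $\partial_SV=q$ and $\partial_qV=S+2w_2q+w_1$, so the variance collapses to the claimed $\frac{\gamma}{2(k+\varphi)}$ and the mean becomes $\frac{-w_1-2w_2q-bq+2\varphi\rho\mu}{2(k+\varphi)}$. Substituting the Gaussian $\pi^*_t$ back into (\ref{HJB_V}), the optimized Hamiltonian equals $g(v^*)$ plus a constant term, depending only on $\gamma$, $k$ and $\varphi$, coming from the Gaussian normalization; the latter is absorbed into the equation for $w_0$.

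The next step is to insert the ansatz into the reduced HJB and collect powers of $q$. Matching the $q^2$, $q^1$ and $q^0$ coefficients yields a decoupled hierarchy: a Riccati-type ODE for $w_2$ with terminal value $w_2(T,\mu)=-\alpha$, a linear ODE for $w_1$ driven by $w_2$ and by $\mu$, and finally an ODE for $w_0$. Here the integro-differential operator $\mathcal{L}^\mu$ acts cleanly on the ansatz: the $q^2$-equation carries no $\mu$-coupling, so $w_2$ is self-consistently a function of $t$ alone, and once $w_1$ is posited affine in $\mu$ (i.e. $w_1=l_0(t)+l_1(t)\mu$) the jump term reduces to $\lambda\,\mathbb{E}[w_1(t,\mu+\eta)-w_1(t,\mu)]\,q=\lambda l_1(t)\mathbb{E}[\eta]\,q$, a known forcing term. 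Solving the Riccati equation for $w_2$ gives the stated closed form, whose terminal value reduces to $-\alpha$; solving the resulting linear equations for $l_1$ and $l_0$ against their zero terminal conditions produces the displayed expressions. Reading off the mean and variance of $\pi^*_t$ then yields (\ref{pi*_normal}).

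The main obstacle is the ODE analysis in the last step: verifying that the affine-in-$\mu$ ansatz for $w_1$ is self-consistent after applying $\mathcal{L}^\mu$, and solving the Riccati equation for $w_2$ together with the coupled linear equation for $w_1$ in closed form while correctly tracking the $\mu$-dependent and jump-induced forcing. The subtlety is that $w_1=l_0+l_1\mu$ must satisfy the $q^1$-matching ODE for every $\mu$, which forces a separation into one equation for the coefficient of $\mu$ and one for the constant term; this is precisely where the factor $(e^{-\kappa(T-t)}-1+\kappa(T-t))/\kappa^2$ and the intensity $\lambda\mathbb{E}[\eta]$ enter $l_0$. By comparison, the entropy-maximization and normalizability steps are routine.
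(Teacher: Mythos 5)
Your proposal is correct and follows essentially the same route as the paper: solve the inner supremum to obtain a Gibbs/Gaussian density in terms of the derivatives of $V$, then verify the ansatz $V=x+qS+w_0+w_1q+w_2q^2$ with $w_1$ affine in $\mu$, matching powers of $q$ and $\mu$ to get the Riccati equation for $w_2$ and the linear ODEs for $l_1,l_0$. Your KL-divergence formulation of the entropy-regularized supremum is a slightly cleaner justification than the paper's pointwise maximization of the integrand, and your mean $(-w_1-2w_2q-bq+2\varphi\rho\mu)/(2(k+\varphi))$ in fact corrects a typo carried through the paper's statement and proof, where $2w_2(t,\mu)\mu$ should read $2w_2(t,\mu)q$.
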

\begin{proof}
	Consider the optimization problem in (\ref{HJB_V}): 
	\begin{equation}
		\resizebox{.9\hsize}{!}{$\begin{aligned}
				\sup\limits_{\pi\in\Pi}\left\{
				\int_{\mathcal{A}}[(S-kv)v\partial_xV-v\partial_qV-bv\partial_SV-\gamma\log\pi_t(v)-\varphi(v-\rho\mu)^2]\pi_t(v)\d v\right\}.
			\end{aligned} 
			$} \label{optimal_int}
	\end{equation}
	We define the function $ f(z):=[(S-kv)v\partial_xV-v\partial_qV-bv\partial_SV-\gamma\log z-\varphi(v-\rho\mu)^2]z $
	%	 \begin{equation*}
		%		f(z):=[(S-kv)v\partial_xV-v\partial_qV-bv\partial_SV-\gamma\log z-\varphi(v-\rho\mu)^2]z, 
		%	\end{equation*}
	and the unique point $ z^* $ to attain the maximum of $ f(z) $ is 
	\begin{equation*}
		z^*=\exp\left(\frac{1}{\gamma}[(S-kv)v\partial_xV-v\partial_qV-bv\partial_SV-\varphi(v-\rho\mu)^2-\gamma]\right). 
	\end{equation*}
	Therefore, the probability density function maximizing (\ref {optimal_int}) satisfies
	\begin{equation}
		\pi^*(v\mid t,\mathbf{y})\propto \exp\left(\frac{1}{\gamma}[(S-kv)v\partial_xV-v\partial_qV-bv\partial_SV-\varphi(v-\rho\mu)^2-\gamma]\right).  
		\label{exp_pi}
	\end{equation}
	After arranging the exponential terms, we know that $ \pi^* $ is a Gaussian distribution,  \begin{equation*}
		\pi^*(v\mid t,\mathbf{y})=\mathcal{N}\left(
		\dfrac{-\partial_qV-b\partial_SV+2\varphi\rho\mu+S\partial_xV}{2(\varphi+k\partial_x V)}, \dfrac{\gamma}{2(\varphi+k\partial_x V)}
		\right). 
	\end{equation*}
	Substitute the formula of $\pi^*(v\mid t,\mathbf{y})$ to (\ref{HJB_V}) and rearrange, then we get 
	\begin{equation*}
		\begin{aligned}
			\left(\partial_t+\frac{1}{2}\sigma^2\partial_{SS}+\mathcal{L}^\mu\right)V&+\dfrac{(\partial_qV+b\partial_SV-2\varphi\rho\mu-S\partial_xV)^2}{4(\varphi+k\partial_xV)}\\
			&+\gamma\log\sqrt{\dfrac{\gamma\pi}{\varphi+k\partial_xV}}-\varphi\rho\mu^2=0. 
		\end{aligned}
	\end{equation*}
	
	First make the ansatz $ V(t,x,S,\mu,q)=x+qS+w(t,\mu,q)  $ as in \cite{cartea2016closed}, 
	and $ w $ satisfies the terminal condition $  w(T,\mu,q)=-\alpha q^2$. 
	Then we get the equation of $ w(t
	,\mu,q) $,
	\begin{equation*}
		\partial_tw+\mathcal{L}^\mu w+\dfrac{(\partial_qw+bq-2\varphi\rho\mu)^2}{4(\varphi+k)}+\gamma\sqrt{\dfrac{\gamma\pi}{\varphi+k}}-\varphi\rho\mu^2=0. 
	\end{equation*}
	Similarly, make an ansatz of $ w(t,\mu,q)=w_0(t,\mu)+w_1(t,\mu)q+w_2(t,\mu)q^2 $, in which $ w_0,\ w_1$ and $ w_2 $ satisfy the coupling equations:
	\begin{subequations}
		\begin{align}
			&0=(\partial_t+\mathcal{L}^\mu)w_2+\dfrac{\left(w_2+\frac{1}{2}b\right)}{k+\varphi},\label{eq:w_2}\\
			&0=(\partial_t+\mathcal{L}^\mu)w_1+\dfrac{w_1-2\varphi\rho\mu}{k+\varphi}\left(w_2+\frac{1}{2}b\right),\label{eq:w_1}\\
			&0=(\partial_t+\mathcal{L}^\mu)w_0+\dfrac{1}{4(k+\varphi)}(w_1-2\varphi\rho\mu)^2+\gamma\sqrt{\dfrac{\gamma\pi}{\varphi+k}}-\varphi\rho^2\mu^2\label{eq:w_0},
		\end{align}
	\end{subequations}
	with terminal conditions $ w_2(T,\mu)=-\alpha$ and $\ w_1(T,\mu)=w_0(T,\mu)=0 $. Notice that there is no term about $\mu$ in (\ref{eq:w_2}) and its terminal condition also does not depend on $\mu$, so we know $ \mathcal{L}^\mu w_2\equiv0 $. Thus, we can solve $ w_2 $ from (\ref{eq:w_2}): 
	\begin{equation*}
		w_2(t,\mu)=-\left(\dfrac{T-t}{k+\varphi}+\dfrac{1}{\alpha-\frac{1}{2}b}\right)^{-1}-\dfrac{1}{2}b. 
	\end{equation*}
	Observing (\ref{eq:w_1}), it is reasonable to assume $ w_1 $ is linear in $\mu$. Hence we make the ansatz $ w_1(t,\mu)=l_0(t)+l_1(t)\mu $, and the terminal condition implies that $ l_0(T)=l_1(T)=0 $. Then (\ref{eq:w_1}) becomes
	\begin{equation*}
		0=\left\{\partial_tl_0+\dfrac{w_2+\frac{1}{2}b}{k+\varphi}l_0+\lambda\mathbb{E}[\eta] l_1\right\}+\left\{\partial_t l_1-\kappa l_1+\dfrac{l_1-2\varphi\rho}{k+\varphi}\left(h_2+\frac{1}{2}b\right)\right\}\mu. 
	\end{equation*}
	The terms in two braces must be zero, since the above equation holds true for all $\mu$. We first solve $ l_1 $ and then solve $ l_0 $ in the similar way, so we have
	\begin{equation*}
		\begin{aligned}
			l_1(t)&=2\varphi\rho\left((T-t)+\frac{k+\varphi}{\alpha-\frac{1}{2}b}\right)^{-1}\dfrac{1-e^{-\kappa(T-t)}}{\kappa},\\
			l_0(t)&=2\varphi\rho\lambda\mathbb{E}[\eta]\left((T-t)+\frac{k+\varphi}{\alpha-\frac{1}{2}b}\right)^{-1}\dfrac{e^{-\kappa(T-t)}-1+\kappa(T-t)}{\kappa^2}. 
		\end{aligned}
	\end{equation*}
	There is no need for us to solve (\ref{eq:w_0}) since $\pi^*$ does not depend on $ w_0 $. Finally notice that the mean value can be represented as
	\begin{equation*}
		\dfrac{-\partial_qV-b\partial_SV+2\varphi\rho\mu+S\partial_xV}{2(\varphi+k\partial_x V)}=\dfrac{-w_1(t,\mu)-2w_2(t,\mu)\mu-bq+2\varphi\rho\mu}{2(\varphi+k)}, 
	\end{equation*}
	and we get the conclusions. 
\end{proof}
Substitute the expression of $ w_2 $ to the mean value of $ \pi^*(v|t,\mathbf{y}) $, and we have
\begin{equation*}
	\dfrac{-w_1-2w_2\mu-bq+2\varphi\rho\mu}{2(\varphi+k)}=\dfrac{1}{\varphi+k}\left[\varphi\rho\mu-\dfrac{1}{2}w_1(t,\mu)\right]+\dfrac{q}{(T-t)+\zeta},
	\label{almost_twap}
\end{equation*}
where $\zeta=(k+\varphi)/(\alpha-\frac{1}{2}b)$. This is a TWAP-like liquidation policy, since the second term on the 
right-hand side is similar to the TWAP policy. 
The subsequent result elucidates the relationship between the exploratory optimal policy and the one presented in (\ref{best_speed_h}). More complete analysis of exploratory HJB equations and convergence analysis as the level of exploration decays to zero can be find in \cite{wang2018exploration}.
\begin{proposition}
	For any $ (t,\mathbf{y})\in[0,T]\times\mathbb{R}^4 $, we have
	\begin{equation*}
		\lim\limits_{\gamma\rightarrow 0}\pi^*(\cdot\mid t,\mathbf{y})\Rightarrow \delta_{v^*}(\cdot),
	\end{equation*}
	where $\Rightarrow$ represents weak convergence of probability measures and $ \delta_{v^*}(\cdot) $ is the Dirac measure centered on $ v^*$, which is shown in (\ref{best_speed}). 
\end{proposition}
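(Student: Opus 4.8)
The plan rests on a single structural observation: the Gaussian $\pi^*(\cdot\mid t,\mathbf{y})$ of (\ref{pi*_normal}) has a mean that does \emph{not} depend on $\gamma$ and a variance that vanishes as $\gamma\downarrow 0$, so the whole family collapses to a point mass at its fixed mean. Accordingly, the first step is to identify that mean with $v^*$. The parameter $\gamma$ enters the coupled system (\ref{eq:w_2})--(\ref{eq:w_0}) only through the term $\gamma\sqrt{\gamma\pi/(\varphi+k)}$ in (\ref{eq:w_0}), which feeds into $w_0$ alone; since $\pi^*$ depends only on $w_1$ and $w_2$, its mean is genuinely $\gamma$-free. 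Comparing the closed forms of $w_1,w_2,l_0,l_1$ in Theorem~\ref{thm2} with $h_1,h_2,l_0,l_1$ in (\ref{best_speed_h}) shows that they coincide term by term, so substituting $h_i=w_i$ into (\ref{best_speed_h}) and rearranging yields exactly the mean $\frac{-w_1-2w_2q-bq+2\varphi\rho\mu}{2(\varphi+k)}$ of (\ref{pi*_normal}); that is, the mean equals $v^*$ for every $\gamma>0$.

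With the mean pinned to the constant $v^*$, the second step is immediate, since the variance $\frac{\gamma}{2(\varphi+k)}\to 0$ as $\gamma\to 0$. I would then finish with the standard fact that a family of Gaussians with fixed mean and vanishing variance converges weakly to the Dirac mass at that mean. Concretely, writing $Z_\gamma\sim\pi^*(\cdot\mid t,\mathbf{y})=\mathcal{N}(v^*,\frac{\gamma}{2(\varphi+k)})$, one has $\mathbb{E}[(Z_\gamma-v^*)^2]=\frac{\gamma}{2(\varphi+k)}\to 0$, so $Z_\gamma\to v^*$ in $L^2$ and hence in distribution, which is precisely $\pi^*(\cdot\mid t,\mathbf{y})\Rightarrow\delta_{v^*}(\cdot)$. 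Equivalently, one may compute the characteristic function $\int_{\mathcal{A}}e^{\mathrm{i}\xi v}\pi^*(v\mid t,\mathbf{y})\d v=\exp(\mathrm{i}\xi v^*-\frac{\gamma}{4(\varphi+k)}\xi^2)$ and let $\gamma\to 0$ to obtain $e^{\mathrm{i}\xi v^*}$, the characteristic function of $\delta_{v^*}$; L\'evy's continuity theorem then delivers the weak convergence.

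There is no real analytic obstacle here: the argument is essentially bookkeeping plus a textbook limit. The one point that genuinely requires care is the algebraic identification in the first step, namely that the $\gamma$-independent mean of (\ref{pi*_normal}) coincides with $v^*$ of (\ref{best_speed})--(\ref{best_speed_h}), because the two expressions were derived from different HJB equations, the exploratory (\ref{HJB_V}) versus the classical (\ref{HJB}). Once that identification is secured, and noting that with $\mathcal{A}=\mathbb{R}$ the density $\pi^*$ lives on the whole action space so that no truncation issues intervene, the weak-convergence conclusion follows at once.
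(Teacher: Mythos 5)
Your proposal is correct and follows exactly the route the paper intends: the paper's entire proof is the single sentence that the convergence ``is direct to get from the explicit expression (\ref{pi*_normal})'', and what you have written---the observation that the Gaussian's mean is $\gamma$-free and coincides with $v^*$ (since $w_1=h_1$, $w_2=h_2$, with $\gamma$ entering only through $w_0$, which does not affect $\pi^*$), together with the vanishing variance $\frac{\gamma}{2(\varphi+k)}\to 0$ and the standard L\'evy/characteristic-function argument---is precisely the elaboration of that one-line claim. No gap; you have simply supplied the bookkeeping the paper leaves implicit.
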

\begin{proof}
	It is direct to get the convergence from the explicit expression (\ref{pi*_normal}). 
\end{proof}

\section{Reinforcement learning algorithm}
\label{section_RL} 
In this section, we extend the framework developed by Jia and Zhou \cite{jia2022policy,jia2022policy2} to stochastic processes with Poisson jumps, and provide some theoretical proofs for policy evaluation and policy improvement. 

To find the optimal policy through learning algorithms, first we need to parameterize the policy $ \pi $ and its value function $ V $. Denote the parameterized policy by $ \pi^\Phi $ and the parameterized value function by $ V^{\Theta} $, where $\Phi$ and $\Theta$ stand for the corresponding parameters. $ \pi^\Phi $ and $ V^{\Theta} $ are respectively known as the action network and the critic network. Notice that $ V^{\Theta} $ is an approximating network of $ V^{\pi^\Phi} $. We first make the following regularity assumption for the critic network.  
\begin{assumption}
	For all $\Theta$, $ V^\Theta(t,\mathbf{y}) $ is sufficiently smooth, i.e. all the derivatives required exist in the classical sense. Moreover, $ V^\Theta(t,\mathbf{y}) $ and $ \frac{\partial V^\Theta}{\partial \Theta}(t,\mathbf{y}) $ are the polynomial growth in $ \mathbf{y} $. 
	\label{assumption:parameterized_V}
\end{assumption}

\subsection{Policy evaluation}
Given an admissible policy $\pi$, our first goal is to find its corresponding value function $ V^\pi(t,\mathbf{y}) $. According to Feynman-Kac theorem, $ V^\pi(t,\mathbf{y}) $ can be characterized by PDE. That is to say, if $ g(t,\mathbf{y})\in C^{1,2}([0,T]\times\mathbb{R}^4) $ such that $ \frac{\partial g}{\partial \mathbf{y}} $, $ \frac{\partial^2 g}{\partial \mathbf{y}^2}  $ are polynomial growth, which also satisfies
\begin{align}\label{feymann}
	&\int_{\mathcal{A}} \pi_t(v)\left[\mathcal{L}^{\pi(v)}g(t,\mathbf{y})-\varphi(v-\rho\mu)^2-\gamma\log(\pi_t(v))\right]\d v=0
\end{align}
with terminal conditions $ g(T,\mathbf{y})=x+qS-\alpha q^2,\  \forall \mathbf{y}\in\mathbb{R}^4 $, then $ g(t,\mathbf{y})$ is the value function $V^\pi(t,\mathbf{y}) $. In (\ref{feymann}), $ \mathcal{L}^{\pi(v)}g(t,\mathbf{y})=\dfrac{\partial g}{\partial t}+\dfrac{\partial g}{\partial X}(S-kv)v-bv\dfrac{\partial g}{\partial S}+\dfrac{1}{2}\sigma^2\dfrac{\partial^2g}{\partial S^2}+\mathcal{L}^\mu g-v\dfrac{\partial g}{\partial q} $, represents an infinitesimal generator given the policy $\pi$. 

We denote the natural filtration generated by $ \{\tilde{\mathbf{Y}}^\pi_t\}_{0\leq t\leq T} $ as $ \mathcal{F}^{\tilde{\mathbf{Y}}^\pi}=\{\mathcal{F}^{\tilde{\mathbf{Y}}^\pi}_t\}_{0\leq t\leq T} $, and define $ \mathcal{F}^{{\mathbf{Y}}^\pi}=\{\mathcal{F}^{{\mathbf{Y}}^\pi}_t\}_{0\leq t\leq T} $ in the same way. The following theorem provides a martingale condition, which is the theoretical core of policy evaluation.
\begin{theorem}
	\label{thm:martingale}
	Let $ M_s=V^\pi\left(s,\tilde{\mathbf{Y}}_s^\pi\right)+\int_{t}^s\int_{\mathcal{A}}\pi_u(v)\left[r\left(\tilde{\mathbf{Y}}_u^\pi,v\right)-\gamma\log(\pi_u(v))\right]\d v\d u $, where $ \tilde{\mathbf{Y}}_s^\pi $ is the solution of (\ref{ex_equation_vec}) and $ r(\tilde{\mathbf{Y}}_u^\pi,v)=-\varphi(v-\rho\mu_u)^2 $ is the running reward function. Given the initial state $ (t,\mathbf{y})\in [0,T]\times\mathbb{R}^4 $, $ V^\pi(t,\mathbf{y}) $ is the value function of $\pi$ if and only if it satisfies terminal condition $ V^\pi(T,\mathbf{y})=f(\mathbf{y}) $, and $ \{M_s\}_{t\leq s\leq T} $
	is a $ \left(\mathcal{F}^{\tilde{\mathbf{Y}}^\pi},\mathbb{P}^{W,N}\right) $-martingale on $ [t,T]$. 
\end{theorem}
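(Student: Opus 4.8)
The plan is to treat both implications with a single Itô--Lévy (Dynkin) computation for the averaged jump-diffusion $\tilde{\mathbf{Y}}^\pi$ of (\ref{ex_equation_vec}), and then read off each direction from the sign of the resulting drift. Let $g\in C^{1,2}([0,T]\times\mathbb{R}^4)$ be a candidate with polynomially growing first and second derivatives, and write $M_s^g$ for the process in the statement with $g$ in place of $V^\pi$. Applying the jump Itô formula to $g(s,\tilde{\mathbf{Y}}_s^\pi)$, the continuous part contributes $\partial_t g$, the first-order terms $\int_{\mathcal{A}}[(S-kv)v\partial_X g-bv\partial_S g-v\partial_q g]\pi_s(v)\,\d v-\kappa\mu\partial_\mu g$ coming from the $\pi$-averaged drift $\tilde{\mathbf{b}}$, and the Brownian second-order term $\tfrac12\sigma^2\partial_{SS}g$; the jump increments $g(u,\mu_{u-}+\eta)-g(u,\mu_{u-})$ split into a compensated-jump martingale and the compensator $\lambda\mathbb{E}_\eta[g(\cdot,\mu+\eta)-g(\cdot,\mu)]$. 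Since the $v$-free terms integrate to themselves against $\pi_s$, these assemble exactly into $\int_{\mathcal{A}}\pi_s(v)\mathcal{L}^{\pi(v)}g\,\d v$. Adding the running-reward integrand, I obtain
\[
\d M_s^g=\Big(\int_{\mathcal{A}}\pi_s(v)\big[\mathcal{L}^{\pi(v)}g-\varphi(v-\rho\mu)^2-\gamma\log\pi_s(v)\big]\d v\Big)\d s+\d(\text{martingale}),
\]
whose drift is precisely the left-hand side of the Feynman--Kac identity (\ref{feymann}) evaluated at $(s,\tilde{\mathbf{Y}}_s^\pi)$.

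For the ``only if'' direction I would set $g=V^\pi$. By the Feynman--Kac characterization (\ref{feymann}) the bracketed drift vanishes identically, so $M_s$ is a local martingale, and the terminal condition $V^\pi(T,\cdot)=f(\cdot)$ holds by the definition of the value function. To upgrade from a local to a genuine $(\mathcal{F}^{\tilde{\mathbf{Y}}^\pi},\mathbb{P}^{W,N})$-martingale I would verify integrability of the two stochastic integrals: the moment bound (\ref{grow_condition}) of Theorem \ref{thm:growth_control}, the polynomial growth of $V^\pi$ and its derivatives, and the finite moments of $\eta$ together control $\mathbb{E}\int_t^T|\sigma\partial_S V^\pi|^2\,\d u$ and the predictable quadratic variation of the compensated-jump integral, so each stochastic integral is a true martingale.

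For the ``if'' direction I would avoid reconstructing the PDE entirely. Assuming $M_s$ is a martingale with $g(T,\cdot)=f(\cdot)$, the martingale property at $s=T$ gives $g(t,\mathbf{y})=M_t=\mathbb{E}_{t,\mathbf{y}}[M_T]$; substituting the terminal condition into $M_T$ and recalling $r=-\varphi(v-\rho\mu)^2$ yields $g(t,\mathbf{y})=\mathbb{E}_{t,\mathbf{y}}\big[f(\tilde{\mathbf{Y}}_T^\pi)+\int_t^T\int_{\mathcal{A}}\pi_u(v)(r-\gamma\log\pi_u(v))\d v\,\d u\big]$, which is exactly the definition (\ref{value_V}) of $V^\pi(t,\mathbf{y})$. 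Since $(t,\mathbf{y})$ is arbitrary, $g$ coincides with the value function.

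The step I expect to be the main obstacle is the local-to-true martingale promotion in the forward direction: establishing the square-integrability of the Brownian and compensated-jump integrals uniformly enough to apply optional sampling, for which the growth estimate (\ref{grow_condition}) and the polynomial-growth hypotheses on $g$ are essential. The bookkeeping of the jump compensator in the Itô--Lévy expansion --- ensuring the $\lambda\mathbb{E}_\eta[\cdots]$ term lands in $\mathcal{L}^\mu$ while the remaining jump integral is correctly compensated --- is the other place requiring care, though it is routine once set up.
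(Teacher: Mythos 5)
Your ``if'' direction is sound and is essentially the paper's argument for that half: evaluate the martingale identity between $t$ and $T$, insert the terminal condition, and recognize the definition (\ref{value_V}). The genuine gap is in your ``only if'' direction. You set $g=V^\pi$ and invoke ``the Feynman--Kac characterization (\ref{feymann})'' to conclude that the drift vanishes, but (\ref{feymann}) as stated in the paper runs in the opposite direction: \emph{if} a function $g\in C^{1,2}$ with polynomially growing derivatives solves the PIDE with the correct terminal datum, \emph{then} it equals $V^\pi$. Nothing in the paper (or in your proposal) establishes the converse, namely that the probabilistically defined value function is itself $C^{1,2}$ and is a classical solution of (\ref{feymann}). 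For a jump-diffusion this is a genuine regularity question, not a formality; without it you cannot even apply the It\^o--L\'evy formula to $V^\pi$, let alone assert that its drift vanishes. So the forward implication, as written, rests on an unproven premise, and the integrability work you describe (local-to-true martingale promotion) is effort spent on the wrong obstacle.

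The paper's proof shows that this machinery is unnecessary for the forward direction. By the definition (\ref{value_V}) and the Markov property of $\tilde{\mathbf{Y}}^\pi$,
\begin{equation*}
V^\pi\bigl(s,\tilde{\mathbf{Y}}_s^\pi\bigr)=\mathbb{E}^{\mathbb{P}^{W,N}}\Bigl[f\bigl(\tilde{\mathbf{Y}}_T^\pi\bigr)+\int_s^T\int_{\mathcal{A}}\pi_u(v)\bigl[r\bigl(\tilde{\mathbf{Y}}_u^\pi,v\bigr)-\gamma\log\pi_u(v)\bigr]\d v\,\d u\,\Bigm|\,\mathcal{F}_s^{\tilde{\mathbf{Y}}^\pi}\Bigr],
\end{equation*}
and adding the $\mathcal{F}_s^{\tilde{\mathbf{Y}}^\pi}$-measurable accumulated reward $\int_t^s\int_{\mathcal{A}}\pi_u(v)[r-\gamma\log\pi_u]\d v\,\d u$ to both sides gives $M_s=\mathbb{E}^{\mathbb{P}^{W,N}}[M_T\mid\mathcal{F}_s^{\tilde{\mathbf{Y}}^\pi}]$, where the terminal condition identifies $M_T$. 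The martingale property is then just the tower property: no smoothness of $V^\pi$, no It\^o formula, no PDE, and no local-martingale upgrade is needed. To repair your approach you would either have to prove the $C^{1,2}$ regularity of $V^\pi$ and that it classically solves (\ref{feymann}) --- a substantial detour, especially with the nonlocal jump term --- or simply replace the It\^o computation by this conditional-expectation argument.
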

\begin{proof}
	If $ V^\pi(t,\mathbf{y}) $ is the value function of $\pi$, from (\ref{value_V}) we know that
	\begin{equation*}
		\begin{aligned}
			M_s&=V^\pi\left(s,\tilde{\mathbf{Y}}_s^\pi\right)+\int_{t}^s\int_{\mathcal{A}}\pi_u(v)\left[r\left(\tilde{\mathbf{Y}}_u^\pi,v\right)-\gamma\log(\pi_u(v))\right]\d v\d u	\\
			&=\mathbb{E}^{\mathbb{P}^{W,N}}\left[f\left(\tilde{\mathbf{Y}}_T^\pi\right)+\int_{t}^{T}\int_{\mathcal{A}}\pi_u(v)\left(r\left(\tilde{\mathbf{Y}}_u^\pi,v\right)-\gamma\log(\pi_u(v))\right)\d v\d u\bigg| \mathcal{F}_s^{\tilde{\mathbf{Y}}^{\pi}}\right]\\
			&=\mathbb{E}^{\mathbb{P}^{W,N}}\left[M_T\left.\bigg|\mathcal{F}_s^{\tilde{\mathbf{Y}}^{\pi}}\right]\right.,   
		\end{aligned}
	\end{equation*} 
	so $ \{M_s\}_{t\leq s\leq T} $ is a $ (\mathcal{F}^{\tilde{\mathbf{Y}}^\pi},\mathbb{P}^{W,N}) $-martingale because of the Markov property of $ \{\tilde{\mathbf{Y}}^\pi_s\}_{t\leq s\leq T} $.  
	
	On the other hand, if $ \tilde{V}^\pi $ is another function that satisfies the conditions in Theorem \ref{thm:martingale}, and
	$$ \tilde{M}_s=\tilde{V}^\pi\left(s,\tilde{\mathbf{Y}}_s^\pi\right)+\int_{t}^s\int_{\mathcal{A}}\pi_u(v)\left[r\left(\tilde{\mathbf{Y}}_u^\pi,v\right)-\gamma\log(\pi_u(v))\right]\d v\d u	 $$
	is also a martingale. Noticing that $ \tilde{V}^\pi(T,\tilde{\mathbf{Y}}^\pi_T)=f(\tilde{\mathbf{Y}}^\pi_T) $, we have
	
	\begin{align*}
		\tilde{M}_s & =\mathbb{E}^{\mathbb{P}^{W,N}}\left[\tilde{M}_T\left|\mathcal{F}_s^{\tilde{\mathbf{Y}}^{\pi}}\right]\right.                                                                                                                                                    \\
		& =\mathbb{E}^{\mathbb{P}^{W,N}}\left.\left[\tilde{V}^\pi(T,\tilde{\mathbf{Y}}_T^\pi)+\int_{t}^{T}\int_{\mathcal{A}}\pi_u(v)\left[r\left(\tilde{\mathbf{Y}}_u^\pi,v\right)-\gamma\log(\pi_u(v))\right]\d v\d u\right|\mathcal{F}_s^{\tilde{\mathbf{Y}}^{\pi}}\right] \\
		& =\mathbb{E}^{\mathbb{P}^{W,N}}\left.\left[f(\tilde{\mathbf{Y}}_T^\pi)+\int_{t}^{T}\int_{\mathcal{A}}\pi_u(v)\left[r\left(\tilde{\mathbf{Y}}_u^\pi,v\right)-\gamma\log(\pi_u(v))\right]\d v\d u\right|\mathcal{F}_s^{\tilde{\mathbf{Y}}^{\pi}}\right]               \\
		& =M_s,
	\end{align*}
	so we get the desired result. 
\end{proof}

Our task is to make sure $ V^{\Theta} $ is close enough to the current value function $ V^{\pi^\Phi} $. According to Jia and Zhou's framework \cite{jia2022policy2}, there are two methods to achieve it, and we theoretically prove that they can also be extended to processes with Poisson jumps.

\subsubsection{Martingale loss function}
According to Theorem \ref{thm:martingale}, we define a martingale $ \{M_s\}_{0\leq s\leq T} $ from the value function $ V ^\pi$. Applying martingale property and the same distribution of $ \mathbf{Y}^{\pi} $ and $ \tilde{\mathbf{Y}}^{\pi} $, we have
\begin{equation*}
	M_s=\mathbb{E}^{\mathbb{P}^{W,N}}\left[M_T\left|\tilde{\mathbf{Y}}^{\pi}_s=\mathbf{y}\right]\right.=\mathbb{E}^{\mathbb{P}}\left[M_T\left|{\mathbf{Y}}_s^{\pi}=\mathbf{y}
	\right]\right.,\qquad \forall s\in[0,T], 
\end{equation*}
which means
\begin{equation*}
	M_s=\arg\min\limits_{\xi\in\mathcal{F}_s^{\mathbf{Y}^{\pi}}}\mathbb{E}^{\mathbb{P}}|M_T-\xi|^2. 
\end{equation*}
When the current policy is $ \pi^\Phi $, we define $ M^\Theta_t $ from $ V^\Theta(t,\mathbf{Y}^{\pi^\Phi}_t) $ as above, and the martingale loss function is defined as
\begin{equation*}
	\begin{aligned}
		\text{ML}(\Theta)&=\mathbb{E}^{\mathbb{P}}\left[\int_{0}^{T}\left|M_T-M^{\Theta}_t\right|^2\d t\right]=\mathbb{E}^{\mathbb{P}}\bigg[\int_{0}^{T}\bigg(f(\mathbf{Y}^{\pi^\Phi}_T)-V^{\Theta}(t,\mathbf{Y}^{\pi^\Phi}_t)\\
		&\quad\quad\quad\quad\quad\quad\quad\quad\quad\ +\int_{t}^{T}\left[r(\mathbf{Y}_s^{\pi^\Phi},v_s^{\pi^\Phi})-\gamma\log(\pi^\Phi(v^{\pi^\Phi}_{s}\mid s,\mathbf{Y}^{\pi^\Phi}_{s}))\right]\d s\bigg)^2\d t\bigg]. 
	\end{aligned}
\end{equation*}
To find parameters minimizing the martingale loss function corresponds to the Monte Carlo algorithm. We use the random gradient descent method to update parameters, which is shown as below: 
\begin{equation}
	\Theta\leftarrow\Theta+\alpha_\Theta^{(1)}\Delta \Theta, 
\end{equation}
\begin{equation}
	\label{eq:ML_update}
	\begin{aligned}
		\Delta \Theta=\sum\limits_{i=0}^{N-1}\bigg(&f(\mathbf{Y}_{t_N}^{\pi^\Phi})+\sum\limits_{j=i}^{N-1}\bigg[r(\mathbf{Y}^{\pi^\Phi}_{t_j},v^{\pi^\Phi}_{t_j})-\gamma\log(\pi^\Phi(v^{\pi^\Phi}_{t_j}\mid t_j,\mathbf{Y}^{\pi^\Phi}_{t_j}))\bigg]h\\
		&-V^{\Theta}(t_i,\mathbf{Y}^{\pi^\Phi}_{t_i})\bigg)
		\dfrac{\partial V^{\Theta}}{\partial \Theta}(t_i,\mathbf{Y}^{\pi^\Phi}_{t_i})h,
	\end{aligned}
\end{equation}
where $ \alpha_\Theta^{(1)} $ is a positive constant representing the learning rate. 
This is an offline algorithm since we can't obtain $ \mathbf{Y}_{t_N}^{\pi^\Phi} $ until the episode ends. 
\subsubsection{Martingale orthogonality condition}
There is another way to use Theorem \ref{thm:martingale} to obtain the optimal parameter $\Theta$. The following proposition shows a necessary and sufficient condition for $ M^\Theta $ to be a martingale, which is called the martingale orthogonality condition. First, we introduce some notation. For any semi-martingale $ D $ and any filtration $ \mathcal{G}=\{\mathcal{G}_t\}_{0\leq t\leq T} $, we denote 
\begin{equation*}
	L^2_\mathcal{G}([0,T];D)=\left\{g=\{g_t\}_{0\leq t\leq T};\ g\ \text{is}\  \mathcal{G}_t\text{-progressively measurable and}  ||g||_2<\infty\right\}, 
\end{equation*}
where $ ||g||_2=(\mathbb{E}[\int_{0}^{T}g^2_t\d \langle D\rangle_t])^{\frac{1}{2}} $ is $ L^2- $norm defined in this space.  
\begin{proposition}
	\label{prop:martingale_orthogonality}
	Let $ M^\Theta_s=V^\Theta(s,\tilde{\mathbf{Y}}_s^\pi)+\int_{t}^s\int_{\mathcal{A}}\pi_u(v)[r(\tilde{\mathbf{Y}}_u^\pi,v)-\gamma\log(\pi_u(v))]\d v\d u. $
	$ M^\Theta=\{M^\Theta_t\}_{0\leq t\leq T} $ is a martingale if and only the following martingale orthogonality condition holds, i.e. for all $ \xi=\{\xi_t\}_{0\leq t\leq T}\in L^2_{\mathcal{F}^{\mathbf{Y}^\pi}}\left([0,T];V^\Theta\left(\cdot,\mathbf{Y}^\pi_\cdot\right)\right) $, 
	\begin{equation}
		\mathbb{E}^\mathbb{P}\int_{0}^{T}\xi_t\left[\d V^\Theta(t,\mathbf{Y}_t^\pi)+r(\mathbf{Y}_t^\pi,v^\pi_t)\d t-\gamma\log(\pi_t(v^\pi_t))\d t\right]=0,  
		\label{eq:martingale_orthogonality}
	\end{equation}
	where $ \xi $ is called the test function. 
	
\end{proposition}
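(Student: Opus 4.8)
The plan is to reduce the martingale property of $M^\Theta$ to the pointwise vanishing of the drift produced by Itô's formula, and then to recognize that this drift condition is exactly what the orthogonality condition probes against an arbitrary test function $\xi$. First I would apply the Itô--Dynkin formula for jump diffusions to $V^\Theta(t,\mathbf{Y}_t^\pi)$ along the realized trajectory. Writing $\mathcal{L}^{v}$ for the generator $\mathcal{L}^{\pi(v)}$ introduced before (\ref{feymann}) evaluated at a fixed action $v$ (so that the $\partial_t$ term is absorbed), this gives
\[
\d V^\Theta(t,\mathbf{Y}_t^\pi)=\mathcal{L}^{v_t^\pi}V^\Theta(t,\mathbf{Y}_t^\pi)\,\d t+\d M^{V,\Theta}_t,
\]
where $M^{V,\Theta}$ collects the Brownian integral $\sigma\,\partial_S V^\Theta\,\d W_t$ and the compensated Poisson integral associated with $V^\Theta(t,\dots,\mu_{t-}+\eta,\dots)-V^\Theta(t,\dots,\mu_{t-},\dots)$, both of which are $(\mathcal{F},\mathbb{P})$-martingales under Assumption \ref{assumption:parameterized_V} and the moment bound (\ref{grow_condition}). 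Substituting into the integrand of (\ref{eq:martingale_orthogonality}), the part $\xi_t\,\d M^{V,\Theta}_t$ has zero expectation, and the orthogonality condition collapses to $\mathbb{E}^\mathbb{P}\int_0^T\xi_t[\mathcal{L}^{v_t^\pi}V^\Theta(t,\mathbf{Y}_t^\pi)+r(\mathbf{Y}_t^\pi,v_t^\pi)-\gamma\log\pi_t(v_t^\pi)]\,\d t=0$.

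Next, because every admissible $\xi\in L^2_{\mathcal{F}^{\mathbf{Y}^\pi}}$ is progressively measurable with respect to the state filtration, I would condition on $\mathcal{F}_t^{\mathbf{Y}^\pi}$ and use that, given the state, the realized action $v_t^\pi$ has conditional law $\pi_t(\cdot\mid t,\mathbf{Y}_t^\pi)$. The tower property then replaces the realized integrand by its action-average
\[
\Psi(t,\mathbf{Y}_t^\pi):=\int_{\mathcal{A}}\pi_t(v)\left[\mathcal{L}^{v}V^\Theta(t,\mathbf{Y}_t^\pi)+r(\mathbf{Y}_t^\pi,v)-\gamma\log\pi_t(v)\right]\d v,
\]
so the condition becomes $\mathbb{E}^\mathbb{P}\int_0^T\xi_t\,\Psi(t,\mathbf{Y}_t^\pi)\,\d t=0$ for all admissible $\xi$. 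By the arbitrariness of $\xi$ — concretely, by taking $\xi_t=\Psi(t,\mathbf{Y}_t^\pi)$, which is itself an admissible test function thanks to the polynomial growth in Assumption \ref{assumption:parameterized_V} together with (\ref{grow_condition}) — this is equivalent to $\Psi(t,\mathbf{Y}_t^\pi)=0$ for a.e. $t$, $\mathbb{P}$-a.s.

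Finally I would identify $\Psi\equiv0$ with the martingale property of $M^\Theta$. Applying Itô's formula to $V^\Theta(t,\tilde{\mathbf{Y}}_t^\pi)$ along the averaged dynamics (\ref{ex_equation_vec}), whose drift is $\tilde{\mathbf{b}}=\int_{\mathcal{A}}\pi_t(v)\mathbf{b}(t,\cdot,v)\,\d v$, shows that the finite-variation part of $M^\Theta_s$ on $[t,s]$ is exactly $\int_t^s\Psi(u,\tilde{\mathbf{Y}}_u^\pi)\,\d u$; since a martingale admits no nonconstant finite-variation part, $M^\Theta$ is a martingale if and only if this drift vanishes, i.e. $\Psi(u,\tilde{\mathbf{Y}}_u^\pi)=0$. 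Because $\mathbf{Y}^\pi$ and $\tilde{\mathbf{Y}}^\pi$ share the same law (established in Section \ref{section_exploratory}), the statements $\Psi(\cdot,\mathbf{Y}^\pi_\cdot)=0$ and $\Psi(\cdot,\tilde{\mathbf{Y}}^\pi_\cdot)=0$ coincide, which closes the equivalence in both directions.

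I expect the main obstacle to be the careful treatment of the jump term: verifying that the compensated Poisson integral in the Itô decomposition is a genuine martingale, and that membership $\xi\in L^2_{\mathcal{F}^{\mathbf{Y}^\pi}}$ — whose norm is taken against the quadratic variation $\langle V^\Theta(\cdot,\mathbf{Y}^\pi_\cdot)\rangle$, which carries both a Brownian and a jump contribution — is precisely what guarantees $\mathbb{E}^\mathbb{P}\int_0^T\xi_t\,\d M^{V,\Theta}_t=0$. The remaining integrability checks required to invoke the tower property and to legitimize $\xi=\Psi$ as a test function follow routinely from Assumption \ref{assumption:parameterized_V} and the moment estimate (\ref{grow_condition}) of Theorem \ref{thm:growth_control}.
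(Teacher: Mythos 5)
Your proof is correct, and its skeleton matches the paper's: It\^{o}'s formula decomposes $V^\Theta$ along the trajectory into a drift plus Brownian and compensated-Poisson martingale parts, the drift is identified with the action-average $\Psi$, and a well-chosen test function forces $\Psi\equiv 0$. Two execution choices differ from the paper, and each trades something. (i) Where you pass from realized-action integrands to $\Psi$ by conditioning on $\mathcal{F}^{\mathbf{Y}^\pi}_t$ and the tower property, the paper instead constructs an explicit one-to-one correspondence between the test-function spaces $L^2_{\mathcal{F}^{\mathbf{Y}^\pi}}\left([0,T];V^\Theta\left(\cdot,\mathbf{Y}^\pi_\cdot\right)\right)$ and $L^2_{\mathcal{F}^{\tilde{\mathbf{Y}}^\pi}}\left([0,T];V^\Theta\left(\cdot,\tilde{\mathbf{Y}}^\pi_\cdot\right)\right)$, representing each $\xi_t$ as a path functional $\bm{\xi}(t,\mathbf{Y}^\pi_{t\wedge\cdot})$ and exploiting the equality of laws of $\mathbf{Y}^\pi$ and $\tilde{\mathbf{Y}}^\pi$; the content is the same, but the paper's device makes the admissibility bookkeeping for test functions explicit on both the sampled and averaged sides, which your tower-property step leaves implicit. (ii) Your key test function is $\xi_t=\Psi(t,\mathbf{Y}^\pi_t)$, which yields $\mathbb{E}^{\mathbb{P}}\int_0^T\Psi^2\,\d t=0$, whereas the paper takes $\xi_t=\text{sgn}(\Psi)$, which yields $\mathbb{E}^{\mathbb{P}}\int_0^T|\Psi|\,\d t=0$. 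The sign function is bounded, so its membership in the test space is immediate; your choice demands exactly the integrability check you flag at the end, namely that $\Psi$ has finite norm against $\d\langle V^\Theta(\cdot,\mathbf{Y}^\pi_\cdot)\rangle_t$, which does follow from Assumption \ref{assumption:parameterized_V} and the moment bound (\ref{grow_condition}) but must be stated. Finally, your arrangement of the equivalence (orthogonality $\Leftrightarrow$ $\Psi\equiv0$ $\Leftrightarrow$ martingale, linked by equality of laws) is more symmetric than the paper's, which obtains the ``martingale $\Rightarrow$ orthogonality'' direction directly from the vanishing expectation of stochastic integrals against a martingale, without invoking It\^{o} at all in that direction; both routes are complete at the same level of rigor.
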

\begin{proof}[Proof]
	First we prove that there is a one-to-one correspondence between these two spaces:  $L^2_{\mathcal{F}^{\tilde{\mathbf{Y}}^\pi}}([0,T];V^\Theta(\cdot,\tilde{\mathbf{Y}}^\pi_\cdot)) $ and $ L^2_{\mathcal{F}^{{\mathbf{Y}}^\pi}}\left([0,T];V^\Theta\left(\cdot,{\mathbf{Y}}^\pi_\cdot\right)\right) $. 
	For any $\xi\in L^2_{\mathcal{F}^{\tilde{\mathbf{Y}}^\pi}}([0,T];V^\Theta(\cdot,\tilde{\mathbf{Y}}^\pi_\cdot)) $, there exists a measurable function $ \bm{\xi}: [0,T]\times C([0,T];\mathbb{R}^4)\rightarrow \mathbb{R} $, and $\bm{\xi} (t,\tilde{\mathbf{Y}}^\pi_{t\wedge\cdot})=\xi_t $. 
	Let $\xi':= \bm{\xi}(t,\mathbf{Y}^\pi_{t\wedge\cdot}) $, which belongs to $ L^2_{\mathcal{F}^{\mathbf{Y}^\pi}}([0,T];V^\Theta(\cdot,\mathbf{Y}^\pi_\cdot)) $ because $ \tilde{\mathbf{Y}}^\pi $ and $ \mathbf{Y}^\pi $ have the same distribution. The correspondence on other side can be proved in the same way.  
	
	To prove (\ref{eq:martingale_orthogonality}), we notice that \begin{equation*}
		\d M_t^\Theta=\d V^\Theta\left(t,\tilde{\mathbf{Y}}_t^\pi\right)+\int_{\mathcal{A}}\pi_t(v)\left[r\left(\tilde{\mathbf{Y}}_t^\pi,v\right)-\gamma\log(\pi_t(v))\right]\d v\d t,  
	\end{equation*} 	
	so we know for all $ \xi\in L^2_{\mathcal{F}^{{\mathbf{Y}}^\pi}}\left([0,T];V^\Theta\left(\cdot,{\mathbf{Y}}^\pi_\cdot\right)\right) $, 
	\begin{align*}
		&\mathbb{E}^\mathbb{P}\int_{0}^{T}{\xi}_t\left[dV^\Theta(t,\mathbf{Y}_t^\pi)+r(\mathbf{Y}_t^\pi,v^\pi_t)dt-\gamma\log(\pi_t(v^\pi_t))\d t\right]\\
		=&\mathbb{E}^\mathbb{P}\int_{0}^{T}\bm{\xi}(t,{\mathbf{Y}}^\pi_{t\wedge\cdot})\left[dV^\Theta(t,\mathbf{Y}_t^\pi)+r(\mathbf{Y}_t^\pi,v^\pi_t)dt-\gamma\log(\pi_t(v^\pi_t))\d t\right]\\
		=&\mathbb{E}^{\mathbb{P}^{W,N}}\int_{0}^{T}\bm{\xi}(t,\tilde{\mathbf{Y}}^\pi_{t\wedge\cdot})\left\{dV^\Theta\left(t,\tilde{\mathbf{Y}}_t^\pi\right)+\int_{\mathcal{A}}\pi_t(v)\left[r\left(\tilde{\mathbf{Y}}_t^\pi,v\right)-\gamma\log(\pi_t(v))\right]\d v\d t\right\}\\
		=&\mathbb{E}^{\mathbb{P}^{W,N}}\int_{0}^{T}\xi'_t\d M_t^\Theta,\qquad \xi'\in L^2_{\mathcal{F}^{\tilde{\mathbf{Y}}^\pi}}([0,T];V^\Theta(\cdot,\tilde{\mathbf{Y}}^\pi_\cdot)),
	\end{align*}	
	where the first equation holds because of the correspondence proved above, and the same distribution of $ {\mathbf{Y}}^\pi $ and $ \tilde{\mathbf{Y}}^\pi $ makes the second equation hold true. 
	If $ M^\Theta_s $ is a martingale, then for all $ \xi'\in L^2_{\mathcal{F}^{\tilde{\mathbf{Y}}^\pi}}([0,T];V^\Theta(\cdot,\tilde{\mathbf{Y}}^\pi_\cdot)),\  \mathbb{E}^{\mathbb{P}^{W,N}}\int_{0}^{T}\xi'_t\d M_t^\Theta=0$, so we get (\ref{eq:martingale_orthogonality}).  
	
	On the other hand, by It\^{o} formula, we have
	\begin{equation}
		dV^\Theta\left(t,\tilde{\mathbf{Y}}_t^\pi\right)=\int_{\mathcal{A}}\pi_t(v)\mathcal{L}^{\pi(v)}V^\Theta\left(t,\tilde{\mathbf{Y}}_t^\pi\right)\d v\d t+\dfrac{\partial V^\Theta}{\partial S}\sigma \d W_t+\Delta V^\Theta \d \hat{N}_t,
		\label{eq:dvalue_theta}
	\end{equation} 
	where $ \Delta V^\Theta=\mathbb{E}[V^\Theta(t,\tilde{\mathbf{Y}}_t^\pi+\mathbf{y}_\eta)- V^\Theta(t,\tilde{\mathbf{Y}}_t^\pi)]$, $ \mathbf{y}_\eta=(0,0,0,\eta)^{\mathsf{T}} $, and $ \hat{N}=\{\hat{N}_t\}_{0\leq t\leq T} $ is the compensated Poisson process related to $ N $. According to Assumption \ref{assumption:parameterized_V}, the integral terms about $ dW_t $ and $ d\hat{N}_t $ are martingales, so their expectations are zero. Specially, let 
	$ \xi_t=\text{sgn}\left(r(\mathbf{Y}_t^\pi,v^\pi_t)-\gamma\log(\pi_t(v^\pi_t))+\int_{\mathcal{A}}\pi_t(v^\pi_t)\mathcal{L}^{\pi(v)}V^\Theta(t,\mathbf{Y}_t^\pi)\d v\right), $ we know $ \xi\in L^2_{\mathcal{F}^{{\mathbf{Y}}^\pi}}([0,T];V(\cdot,{\mathbf{Y}}^\pi_\cdot)) $ because of Assumption \ref{assumption1} and Definition \ref{admi_def}. Due to the condition that $ {\mathbf{Y}}^\pi $ and $ \tilde{\mathbf{Y}}^\pi $ have the same distribution again, we get 
	\begin{equation*}
		\begin{aligned}
			0&=\mathbb{E}^{\mathbb{P}}\int_{0}^T\xi_t\left[\d V^\Theta(t,\mathbf{Y}_t^\pi)+r(\mathbf{Y}_t^\pi,v^\pi_t)\d t-\gamma\log(\pi_t(v^\pi_t))\d t\right]\\
			&=\mathbb{E}^{\mathbb{P}^{W,N}}\int_{0}^T\xi'_t\left\{\d V^\Theta\left(t,\tilde{\mathbf{Y}}_t^\pi\right)+\int_{\mathcal{A}}\pi_t(v)\left[r\left(\tilde{\mathbf{Y}}_t^\pi,v\right)-\gamma\log(\pi_t(v))\right]\d v\d t\right\}\\
			&=\mathbb{E}^{\mathbb{P}^{W,N}}\int_{0}^T\left|\int_{\mathcal{A}}\bigg[r({\tilde{\mathbf{Y}}}_t^\pi,v)-\gamma\log(\pi_t(v))+\mathcal{L}^{\pi(v)}V^\Theta(t,{\mathbf{Y}}_t^\pi)\bigg]\pi_t(v)\d v\right|\d t,
		\end{aligned}
	\end{equation*}
	which follows that $ \int_{\mathcal{A}}[r({\tilde{\mathbf{Y}}}_t^\pi,v)-\gamma\log(\pi_t(v))+\mathcal{L}^{\pi(v)}V^\Theta(t,{\mathbf{Y}}_t^\pi)]\pi_t(v)\d v=0 $ almost everywhere.  
	%	\begin{equation*}
		%		r(\mathbf{Y}_t^\pi,v_t^\pi)-\gamma\log(\pi_t)+\int_{\mathcal{A}}\pi_t(v)\mathcal{L}^{\pi(k)}V^\Theta(t,\mathbf{Y}_t^\pi)dv=0\ \text{a.e.}. 
		%	\end{equation*} 
	Based on (\ref{eq:dvalue_theta}), we know $ \d M_t^\Theta=\dfrac{\partial V^\Theta}{\partial S}\sigma \d W_t+\Delta V^\Theta \d \hat{N}_t$, so $ \{M_t^\Theta\}_{0\leq t\leq T} $ is also a martingale due to $ W $ and $ \hat{N} $ being martingales. 
\end{proof}

The second method for policy evaluation is to solve the martingale orthogonality condition in Proposition \ref{prop:martingale_orthogonality}. Specifically, we need to choose a test function and solve (\ref{eq:martingale_orthogonality}), which is commonly associated with the Temporal Difference (TD) learning method in RL. The common method to solve this equation is random approximation. We denote $\alpha_\Theta^{(2)}$ as the learning rate for martingale orthogonality condition, so $\Theta$ can be updated in the following ways,  
\begin{equation*}
	\Theta\leftarrow\Theta+\alpha_\Theta^{(2)}\int_{0}^T\xi_t\d M_t^{\Theta}\approx\Theta+\alpha_\Theta^{(2)}\sum\limits_{i=0}^{N-1}\xi_{t_i}\left(M^{\Theta}_{t_{i+1}}-M^{\Theta}_{t_i}\right). 
\end{equation*}
In the first method, the martingale loss function will descend along the gradient, but we can't determine the sign of $ \alpha_\Theta^{(2)} $ since we don't know much about this equation near the true solution. Therefore, we make some modifications to the algorithm basing on \cite{jia2022policy}, which means we consider an equivalent minimization problem shown in (\ref{eq:min_MO}).
\begin{equation}
	\label{eq:min_MO}
	\min\limits_{\Theta}\left(\mathbb{E}^\mathbb{P}\int_{0}^{T}\xi_t\left[\d V^{\Theta}+r(\mathbf{Y}_t^{\pi^\Phi},v_t^{\pi^\Phi})\d t-\gamma\log(\pi^\Phi(v_t^{\pi^\Phi}\mid t,\mathbf{Y}_{t}^{\pi^\Phi}))\d t\right]
	\right)^2. 
\end{equation}
After discretization of time terminal, we get from one sample, 
\begin{equation}
	\label{MO_update}
	\min\limits_{\Theta}\bigg(\sum\limits_{i=0}^{N-1}\xi_{t_i}\bigg[\Delta V^{\Theta,\Phi}_{t_i}+r\left(\mathbf{Y}_{t_i}^{\pi^\Phi},v_{t_i}^{\pi^\Phi}\right)h
	-\gamma\log\left(\pi^\Phi(v_{t_i}^{\pi^\Phi}\mid t_i,\mathbf{Y}_{t_i}^{\pi^\Phi})\right)h\bigg]
	\bigg)^2,
\end{equation}
where $ \Delta V^{\Theta,\Phi}_{t_i}=V^{\Theta}(t_{i+1},\mathbf{Y}_{t_{i+1}}^{\pi^\Phi})-V^{\Theta}(t_{i},\mathbf{Y}_{t_i}^{\pi^\Phi}) $. Then we calculate the gradient to get the direction: 
\begin{equation}
	\label{eq:MO_update}
	\Delta \Theta=\sum\limits_{i=0}^{N-1}\xi_{t_i}\bigg[\Delta V^{\Theta,\Phi}_{t_i}+r\left(\mathbf{Y}_{t_i}^{\pi^\Phi},v_{t_i}^{\pi^\Phi}\right)h-\gamma\log\left(\pi^\Phi(v_{t_i}^{\pi^\Phi}| t_i,\mathbf{Y}_{t_i}^{\pi^\Phi})\right)h\bigg]\dfrac{\partial\Delta V^{\Theta,\Phi}_{t_i}}{\partial \Theta}, 
\end{equation}
and update by the following formula with the learning rate $ \alpha_\Theta^{(2)} $, which is a positive constant, 
\begin{equation}
	\Theta\leftarrow\Theta-\alpha_\Theta^{(2)}\Delta\Theta.
\end{equation}
Additionally, we also provide a feasible method to choose test functions, and take the terminal condition into consideration. According to Proposition \ref{prop:martingale_orthogonality}, we need to choose $ \xi\in L^2_{\mathcal{F}^{\mathbf{Y}^\pi}}([0,T];V^\Theta(\cdot,\mathbf{Y}^\pi_\cdot,\pi)) $. We follow uniform distribution to choose a random number at $ t_i $ as the value of $\xi_{t_i}$, set $ \xi_{t_{N-1}}=1 $ to amplify the impact of real terminal conditions in the loss function, and replace $ V^{\Theta}(T) $ at terminal with the real sampling data $ V^{k}(T) $ to take the terminal condition into consideration. The superscript $ k $ represents the real return based on the sample trajectory in the $k^{th}$ episode. 

\subsection{Policy improvement}
To update the policy, noticing that the only parameter in the policy is $ \Phi $, our target is to find the policy gradient $ g(t,\mathbf{y},\Phi):=\frac{\partial}{\partial\Phi}V^{\pi^\Phi}(t,\mathbf{y}) $. Substitute the parameterized policy $\pi^\Phi$ into (\ref{feymann}) and take its derivative with respect to $ \Phi $, then we have
\begin{multline}
	\int_{\mathcal{A}}\left\{\pi^\Phi(v|t,\mathbf{y})\bigg[\mathcal{L}^{\pi^\Phi(v)}g(t,\mathbf{y},\Phi)-\gamma\dfrac{\partial}{\partial\Phi}\log(\pi^\Phi(v\mid t,\mathbf{y}))\bigg]\right.\\ +\dfrac{\partial}{\partial\Phi}\pi^\Phi(v|t,\mathbf{y})\bigg[\mathcal{L}^{\pi^\Phi(v)}V^{\pi^\Phi}(t,\mathbf{y},\pi^\Phi)+r(\mathbf{y},v)-\gamma\log(\pi^\Phi(v\mid t,\mathbf{y}))\bigg]
	\bigg\}\d v=0,
	\label{partial}
\end{multline}
which also satisfies the terminal condition $ g(T,\mathbf{y},\Phi)=0 $. Define
\begin{equation}
	\label{eq:check_r}
	\begin{aligned}
		\check{r}(\mathbf{y},v,\Phi)=\dfrac{\frac{\partial\pi^\Phi}{\partial\Phi}(v|t,\mathbf{y})}{\pi^\Phi(v|t,\mathbf{y})}\bigg[&\mathcal{L}^{\pi^\Phi(v)}V^{\pi^\Phi}(t,\mathbf{y})+r(\mathbf{y},v)\\
		&-\gamma\log(\pi^\Phi(v\mid t,\mathbf{y}))\bigg]-\gamma\dfrac{\partial}{\partial\Phi}\log(\pi^\Phi(v\mid t,\mathbf{y})),
	\end{aligned}
\end{equation}
and then we rewrite (\ref{partial}) as 
\begin{equation}
	\int_{\mathcal{A}}\pi^\Phi(v\mid t,\mathbf{y})\bigg[\mathcal{L}^{\pi^\Phi(v)}g(t,\mathbf{y},\Phi)+\check{r}(\mathbf{y},v,\Phi)\bigg]\d v=0,\qquad g(T,\mathbf{y},\Phi)=0.
	\label{new_fey}
\end{equation}
Consequently, we apply Feynman-Kac formula and get another expression of $ g $, 
\begin{equation}
	\label{grad_feynman}
	\begin{aligned}
		g(t,\mathbf{y},\Phi)&=\mathbb{E}^{\mathbb{P}}\left.\left[\int_{t}^{T}\check{r}(\mathbf{Y}^{\pi^\Phi}_s,v^{\pi^\Phi}_s,\Phi)\d s\right|\mathbf{Y}^{\pi^\Phi}_t=\mathbf{y}\right]\\
		&=\mathbb{E}^{\mathbb{P}^{W,N}}\left.\left[\int_{t}^T\int_{\mathcal{A}}\pi^\Phi(v\mid s,\tilde{\mathbf{Y}}_s^{\pi^\Phi})\check{r}(\tilde{\mathbf{Y}}^{\pi^\Phi}_s,v,\Phi)\d v\d s\right|\tilde{\mathbf{Y}}^{\pi^\Phi}_t=\mathbf{y}\right].  
	\end{aligned}
\end{equation}
Then the calculation of policy gradient can be done in the same way as shown in policy evaluation, with the difference of reward functions. However, the new reward function contains the item $\mathcal{L}^{\pi^\Phi(v)}V^{\pi^{\Phi}}$, which is not a direct result through sampling, so further processing is needed. First, we need the following technical assumptions for the policy estimators. 
\begin{assumption}
	\label{check_r_assumption}
	The following conditions hold true:
	\begin{enumerate}
		\item For all $(t,\mathbf{y})\in[0,T]\times\mathbb{R}^4,\  \pi^\Phi(v\mid t,\mathbf{y}) $ is smooth about $ \Phi $, and there exists $ C>0,\ p\geq1 $, such that $ \int_{\mathcal{A}}|\check{r}(\mathbf{y},v,\Phi)|\pi^\Phi(v|t,\mathbf{y})\d v\leq C(1+|\mathbf{y}|^p) $; 
		%		\begin{equation*}
			%			\int_{\mathcal{A}}|\check{r}(\mathbf{y},v,\Phi)|\pi^\Phi(v|t,\mathbf{y})dv\leq C(1+|\mathbf{y}|^p); 
			%		\end{equation*}
		\item $ \int_{\mathcal{A}}|\frac{\partial}{\partial \Phi}\log(\pi^\Phi(v\mid t,\mathbf{y}))|^2\pi^\Phi(v\mid t,\mathbf{y})\d v $ is continuous in $(t,\mathbf{y})$;
		\item $ V^{\pi^\Phi}(t,\mathbf{y}) \in C^{1,2}([0,T]\times\mathbb{R}^4)$ is polynomial growth in $ \mathbf{y} $.    
	\end{enumerate}
\end{assumption}
The following theorem provides the formula of the policy gradient $ g(t,\mathbf{y},\Phi) $. 
\begin{theorem}
	Given an admissible policy $\pi^\Phi$, $ g(t,\mathbf{y},\Phi)=\frac{\partial}{\partial \Phi}V^{\pi^\Phi}(t,\mathbf{y}) $ is the gradient of it and satisfies \
	\begin{equation}
		\label{policy_g}
		\resizebox{.90\hsize}{!}{$\begin{aligned}
				g(t,\mathbf{y},\Phi)=\mathbb{E}^{\mathbb{P}}\bigg[
				\int_{t}^T\dfrac{\partial}{\partial \Phi}\log(\pi^\Phi(v^{\pi^\Phi}_s|s,\mathbf{Y}^{\pi^\Phi}_s))\bigg(\d V^{\pi^\Phi}(s,\mathbf{Y}^{\pi^\Phi}_s)+\tilde{r}^\Phi_s\d s\bigg)\bigg|\mathbf{Y}^{\pi^\Phi}_t=\mathbf{y}\bigg],
			\end{aligned} $}
	\end{equation}
	where $\tilde{r}^\Phi_s =r(\mathbf{Y}^{\pi^\Phi}_s,v^{\pi^\Phi}_s)-\gamma\log(\pi^\Phi(v^{\pi^\Phi}_s\mid s,\mathbf{Y}^{\pi^\Phi}_s))-\gamma $. 
\end{theorem}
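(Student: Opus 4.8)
\emph{Proof proposal.} The plan is to begin from the Feynman--Kac representation (\ref{grad_feynman}) of the gradient and to trade the generator term $\mathcal{L}^{\pi^\Phi(v)}V^{\pi^\Phi}$ hidden inside $\check{r}$ for the stochastic differential $\d V^{\pi^\Phi}$, using the same It\^o device that drove the proof of Proposition \ref{prop:martingale_orthogonality}. First I would use the elementary identity $\frac{\partial_\Phi\pi^\Phi}{\pi^\Phi}=\frac{\partial}{\partial\Phi}\log\pi^\Phi$ to rewrite the auxiliary reward (\ref{eq:check_r}) in the factored form
\begin{equation*}
\check{r}(\mathbf{y},v,\Phi)=\frac{\partial}{\partial\Phi}\log\pi^\Phi(v\mid t,\mathbf{y})\Big[\mathcal{L}^{\pi^\Phi(v)}V^{\pi^\Phi}(t,\mathbf{y})+r(\mathbf{y},v)-\gamma\log\pi^\Phi(v\mid t,\mathbf{y})-\gamma\Big],
\end{equation*}
so that the bracket is exactly $\mathcal{L}^{\pi^\Phi(v)}V^{\pi^\Phi}+\tilde{r}^\Phi$, where $\tilde{r}^\Phi$ is the reward appearing in the target identity (\ref{policy_g}).

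Second, I would apply It\^o's formula to $V^{\pi^\Phi}(s,\mathbf{Y}_s^{\pi^\Phi})$ along the realized trajectory; because the drift is evaluated at the realized action $v_s^{\pi^\Phi}$ and $\mathcal{L}^\mu$ already contains the jump compensator, this produces, in the same form as (\ref{eq:dvalue_theta}),
\begin{equation*}
\d V^{\pi^\Phi}(s,\mathbf{Y}_s^{\pi^\Phi})=\mathcal{L}^{\pi^\Phi(v_s^{\pi^\Phi})}V^{\pi^\Phi}(s,\mathbf{Y}_s^{\pi^\Phi})\,\d s+\frac{\partial V^{\pi^\Phi}}{\partial S}\sigma\,\d W_s+\Delta V^{\pi^\Phi}\,\d\hat{N}_s.
\end{equation*}
Substituting $\mathcal{L}^{\pi^\Phi(v_s^{\pi^\Phi})}V^{\pi^\Phi}\,\d s=\d V^{\pi^\Phi}-\frac{\partial V^{\pi^\Phi}}{\partial S}\sigma\,\d W_s-\Delta V^{\pi^\Phi}\,\d\hat{N}_s$ into the factored $\check{r}$ and then into (\ref{grad_feynman}), the gradient splits into precisely the target integrand $\frac{\partial}{\partial\Phi}\log\pi^\Phi\,(\d V^{\pi^\Phi}+\tilde{r}^\Phi_s\,\d s)$ and a residual term
\begin{equation*}
-\,\mathbb{E}^{\mathbb{P}}\!\left[\int_t^T\frac{\partial}{\partial\Phi}\log\pi^\Phi(v_s^{\pi^\Phi}\mid s,\mathbf{Y}_s^{\pi^\Phi})\Big(\frac{\partial V^{\pi^\Phi}}{\partial S}\sigma\,\d W_s+\Delta V^{\pi^\Phi}\,\d\hat{N}_s\Big)\,\bigg|\,\mathbf{Y}_t^{\pi^\Phi}=\mathbf{y}\right].
\end{equation*}

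The theorem therefore reduces to showing that this residual has zero conditional expectation, and this is the step I expect to be the main obstacle. The delicate point is that the score $\frac{\partial}{\partial\Phi}\log\pi^\Phi$ is evaluated at the realized action $v_s^{\pi^\Phi}$, so the integrand lives on the enlarged space that carries the exploration randomness $Z$ rather than on $(W,N)$ alone. Since $Z$ is independent of $W$ and $N$, the processes $W$ and $\hat{N}$ remain $(\{\mathcal{F}_t\},\mathbb{P})$-martingales and the integrand is $\{\mathcal{F}_t\}$-adapted; the integral is then a local martingale, and I would upgrade it to a true (zero-mean) martingale by verifying the $L^2$ bounds. Concretely, Assumption \ref{check_r_assumption}(3) gives $V^{\pi^\Phi}\in C^{1,2}$ with polynomial growth, which controls $\frac{\partial V^{\pi^\Phi}}{\partial S}$ and the jump increment $\Delta V^{\pi^\Phi}$; combining this with the moment estimate (\ref{grow_condition}) of Theorem \ref{thm:growth_control} and the integrability of the score in Assumption \ref{check_r_assumption}(1)--(2) yields square-integrability of the two stochastic integrands over $[t,T]$. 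If one prefers to make the measurability fully transparent, the same-distribution passage from $\mathbf{Y}^{\pi^\Phi}$ to the averaged state $\tilde{\mathbf{Y}}^{\pi^\Phi}$ used in Theorem \ref{thm:martingale} and Proposition \ref{prop:martingale_orthogonality} replaces the score by its $\pi^\Phi$-average $\int_{\mathcal{A}}\pi^\Phi(v\mid s,\tilde{\mathbf{Y}}_s^{\pi^\Phi})\frac{\partial}{\partial\Phi}\log\pi^\Phi(v\mid s,\tilde{\mathbf{Y}}_s^{\pi^\Phi})\,\d v$, which is $\mathcal{F}^{\tilde{\mathbf{Y}}^{\pi^\Phi}}$-progressively measurable and driven only by $W$ and $\hat{N}$. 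Carefully establishing these integrability estimates — in particular bounding the product of the score function and $\Delta V^{\pi^\Phi}$ uniformly over the horizon — is the technical heart of the argument; once the residual vanishes, (\ref{policy_g}) follows immediately.
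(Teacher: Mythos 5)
Your overall route is the same as the paper's: start from the Feynman--Kac representation (\ref{grad_feynman}), factor $\check{r}$ through the score $\frac{\partial}{\partial\Phi}\log\pi^\Phi$ (your factored form of (\ref{eq:check_r}) is correct), apply It\^{o}'s formula to $V^{\pi^\Phi}$ to trade $\mathcal{L}^{\pi^\Phi(v)}V^{\pi^\Phi}\,\d s$ for $\d V^{\pi^\Phi}$ minus the Brownian and compensated-Poisson integrals, and reduce the theorem to showing that the resulting stochastic-integral residual has zero conditional expectation. Even the measurability remedy you sketch --- passing to the averaged state $\tilde{\mathbf{Y}}^{\pi^\Phi}$ by the same-distribution argument, so that the score is replaced by its $\pi^\Phi$-average --- is exactly the device used in the paper's proof.

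The gap is in the integrability step, which you yourself identify as the technical heart. You propose a \emph{global} $L^2$ bound and assert that Assumption \ref{check_r_assumption}(3) --- $V^{\pi^\Phi}\in C^{1,2}$ with polynomial growth --- ``controls $\frac{\partial V^{\pi^\Phi}}{\partial S}$ and the jump increment $\Delta V^{\pi^\Phi}$.'' That inference fails for the derivative: polynomial growth of a function does not bound its derivatives (e.g.\ $\sin(e^{x})$ is bounded while its derivative is not), and none of the paper's assumptions impose any growth condition on $\frac{\partial V^{\pi^\Phi}}{\partial S}$. The jump term is less problematic, since $\Delta V^{\pi^\Phi}$ is a difference of values of $V^{\pi^\Phi}$ and can be handled by polynomial growth plus the moment estimate (\ref{grow_condition}); but even there, Assumption \ref{check_r_assumption}(2) gives only \emph{continuity} of the averaged squared score in $(t,\mathbf{y})$, i.e.\ local rather than global bounds, so the product of score and integrand is still not globally $L^2$-controlled by the stated hypotheses. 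The paper circumvents all of this by localization: it introduces $\tau_n=\inf\{s\geq t:|\mathbf{Y}^{\pi^\Phi}_s|\geq n\}$, works on $[t,T\wedge\tau_n]$ where the state lies in a compact ball --- so that continuity of $\frac{\partial V^{\pi^\Phi}}{\partial S}$ (from $C^{1,2}$) and of the averaged squared score makes both stochastic integrands bounded, hence the integrals true zero-mean martingales --- and only then sends $n\to\infty$, using Theorem \ref{thm:growth_control} to conclude $T\wedge\tau_n\to T$ and the dominated convergence theorem (via Assumption \ref{check_r_assumption}(1)) to pass to the limit in the identity. Your argument becomes complete if you replace the global $L^2$ claim by this stopping-time localization: prove the equality of the two expectations on $[t,T\wedge\tau_n]$ first, then take limits.
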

\begin{proof}
	We only need to prove (\ref{grad_feynman}) and (\ref{policy_g}) are equivalent. For fixed $ t $, Define $ \tau_n=\inf\{s\geq t:|\mathbf{Y}^{\pi^\Phi}_s|\geq n\} $. Due to the definition of $\check{r}$ in (\ref{eq:check_r}), we apply It\^{o} formula to $ V^{\pi^\Phi}(s,\mathbf{Y}^{\pi^\Phi}_s) $, and then get
	\begin{equation*}\resizebox{.99\hsize}{!}{$ \begin{aligned}
				&\int_{t}^{T\wedge\tau_n}\dfrac{\partial}{\partial \Phi}\log(\pi^\Phi(v^{\pi^\Phi}_s|s,\mathbf{Y}^{\pi^\Phi}_s))\bigg\{\d V^{\pi^\Phi}(s,\mathbf{Y}^{\pi^\Phi}_s)+\tilde{r}^\Phi_s\d s\bigg\}\\
				=&\int_{t}^{T\wedge\tau_n}\dfrac{\partial}{\partial \Phi}\log(\pi^\Phi(v^{\pi^\Phi}_s|s,\mathbf{Y}^{\pi^\Phi}_s))\bigg\{\mathcal{L}^{\pi^\Phi(v)}V^{\pi^\Phi}(s,\mathbf{Y}^{\pi^\Phi}_s)+\tilde{r}^\Phi_s\d s+\dfrac{\partial V^{\pi^\Phi}}{\partial S}\sigma \d W_s+\Delta V^{\pi^\Phi} \d \hat{N}_s\bigg\},\\
				=& \int_{t}^{T\wedge\tau_n}\bigg\{\check{r}(\mathbf{Y}^{\pi^\Phi}_s,v^{\pi^\Phi}_s,\Phi)ds +\dfrac{\partial}{\partial \Phi}\log(\pi^\Phi(v^{\pi^\Phi}_s|s,\mathbf{Y}^{\pi^\Phi}_s))\bigg[\dfrac{\partial V^{\pi^\Phi}}{\partial S}\sigma \d W_s+\Delta V^{\pi^\Phi}\d \hat{N}_s\bigg]\bigg\}, 
			\end{aligned} $}
	\end{equation*}
	where $ \Delta V^{\pi^\Phi}=\mathbb{E}[V^{\pi^\Phi}(s,\mathbf{Y}^{\pi^\Phi}_s+\mathbf{y}_\eta)-V^{\pi^\Phi}(s,\mathbf{Y}^{\pi^\Phi}_s)] $.
	Notice that
	\begin{equation*}
		\begin{aligned}
			&\mathbb{E}^\mathbb{P}\left[\int_{t}^{T\wedge\tau_n}\dfrac{\partial}{\partial \Phi}\log(\pi^\Phi(v^{\pi^\Phi}_s|s,\mathbf{Y}^{\pi^\Phi}_s))\left\{\dfrac{\partial V^{\pi^\Phi}}{\partial S}\sigma \d W_s+\Delta V^{\pi^\Phi} \d \hat{N}_s
			\right\}\bigg|\mathbf{Y}^{\pi^\Phi}_t=\mathbf{y}\right]\\
			=&\mathbb{E}^{\mathbb{P}^{W,N}}\bigg[\int_{t}^{T\wedge\tilde{\tau}_n}\int_{\mathcal{A}}\dfrac{\partial}{\partial \Phi}\log(\pi^\Phi(v\mid s,\tilde{\mathbf{Y}}^{\pi^\Phi}_s))\pi^\Phi(v|s,\tilde{\mathbf{Y}}^{\pi^\Phi}_s)\\
			&\qquad\qquad\qquad\qquad\times\bigg(\dfrac{\partial V^{\pi^\Phi}}{\partial S}\sigma \d W_s+\Delta V^{\pi^\Phi} \d \hat{N}_s
			\bigg)\d v\bigg|\tilde{\mathbf{Y}}^{\pi^\Phi}_t=\mathbf{y}
			\bigg], 
		\end{aligned}
	\end{equation*}
	where $ \tilde{\tau}_n=\inf\{s\geq t:|\tilde{\mathbf{Y}}^{\pi^\Phi}|\geq n\} $ because $ \tilde{\mathbf{Y}}^{\pi^\Phi} $ and $ {\mathbf{Y}}^{\pi^\Phi} $ have the same distribution. Before going further, we check the square integrability of processes, which are $$ \int_{t}^{T\wedge\tilde{\tau}_n}\left|\int_{\mathcal{A}}\dfrac{\partial}{\partial \Phi}\log(\pi^\Phi(v\mid s,\tilde{\mathbf{Y}}^{\pi^\Phi}_s))\dfrac{\partial V^{\pi^\Phi}}{\partial S}\sigma\pi^\Phi(v\mid s,\tilde{\mathbf{Y}}^{\pi^\Phi}_s) \d v\right|^2\d s<\infty, $$
	and $$ \int_{t}^{T\wedge\tilde{\tau}_n}\left|\int_{\mathcal{A}}\dfrac{\partial}{\partial \Phi}\log(\pi^\Phi(v|s,\tilde{\mathbf{Y}}^{\pi^\Phi}_s))\Delta V^{\pi^\Phi}\pi^\Phi(v|s,\tilde{\mathbf{Y}}^{\pi^\Phi}_s) \d v\right|^2\d s<\infty. $$
	Since the integral's interval is bounded, we only need to prove the boundedness of the integrand. According to the definition of $\tau_n$ and Assumption \ref{check_r_assumption}, we have
	\begin{equation*}	 
		\begin{aligned}
			&\left|\int_{\mathcal{A}}\dfrac{\partial}{\partial \Phi}\log(\pi^\Phi(v\mid s,\tilde{\mathbf{Y}}^{\pi^\Phi}_s))\dfrac{\partial V^{\pi^\Phi}}{\partial S}\sigma\pi^\Phi(v\mid s,\tilde{\mathbf{Y}}^{\pi^\Phi}_s) \d v\right|^2\\
			\leq&\int_{\mathcal{A}}\left|\dfrac{\partial}{\partial \Phi}\log(\pi^\Phi(v\mid s,\tilde{\mathbf{Y}}^{\pi^\Phi}_s))\dfrac{\partial V^{\pi^\Phi}}{\partial S}\sigma\right|^2\pi^\Phi(v\mid s,\tilde{\mathbf{Y}}^{\pi^\Phi}_s) \d v\\
			\leq& C\max\limits_{0\leq t\leq T\wedge\tilde{\tau}_n}\left|\dfrac{\partial V^{\pi^\Phi}}{\partial S}\right|\int_{\mathcal{A}}\left|\dfrac{\partial}{\partial \Phi}\log(\pi^\Phi(v\mid s,\tilde{\mathbf{Y}}^{\pi^\Phi}_s))\right|^2\pi^\Phi(v\mid s,\mathbf{Y}^{\pi^\Phi}_s)\d v< \infty. 
		\end{aligned}
	\end{equation*}
	The similar property of the integrand about $ d\hat{N}_s $ is proved directly according to Assumption \ref{check_r_assumption}. Hence these two integral terms are equal to 0 after taking expectation on previous formula, which gives 
	\begin{equation*}
		\begin{aligned}
			&\mathbb{E}^\mathbb{P}\bigg[\int_{t}^{T\wedge\tau_n}\dfrac{\partial}{\partial \Phi}\log(\pi^\Phi(v^{\pi^\Phi}_s|s,\mathbf{Y}^{\pi^\Phi}_s))\bigg(\d V^{\pi^\Phi}(s,\mathbf{Y}^{\pi^\Phi}_s)+\tilde{r}^\Phi_{s}\d s\bigg)
			\bigg|\mathbf{Y}^{\pi^\Phi}_t=\mathbf{y}\bigg]\\
			=&\mathbb{E}^\mathbb{P}\bigg[\int_{t}^{T\wedge\tau_n}\check{r}(\mathbf{Y}^{\pi^\Phi}_s,v^{\pi^\Phi}_s,\Phi)\d s\bigg|\mathbf{Y}^{\pi^\Phi}_t=\mathbf{y}\bigg].
		\end{aligned}
	\end{equation*}
	Finally using Theorem \ref{thm:growth_control}, we know that $ \lim\limits_{n\rightarrow\infty}T\wedge\tau_n=T $. According to Assumption \ref{check_r_assumption} and dominated convergence theorem, let $ n\rightarrow\infty $ and we obtain the conclusion. 
\end{proof}

Since (\ref{policy_g}) can be approximated through sampling, we use $ V^\Theta $ to approximate $ V^{\pi^\Phi} $, and get the approximate calculation of the policy gradient to update our policy with learning rate $ \alpha_\Phi>0 $. More precisely, we update $ \Phi $ by the following formula, 
\begin{equation}
	\Phi\leftarrow\Phi+\alpha_\Phi \sum\limits_{i=0}^{N-1}\Delta \Phi_i,  
\end{equation}
\begin{equation}
	\label{update_policy}
	\Delta \Phi_i=\dfrac{\partial}{\partial\Phi}\log(\pi^\Phi(v^{\pi^\Phi}_{t_i}|t_i,\mathbf{Y}_{t_i}^{\pi^\Phi}))[V^{\Theta}({t_{i+1}},\mathbf{Y}^{\pi^\Phi}_{t_{i+1}})-V^{\Theta}({t_{i}},\mathbf{Y}^{\pi^\Phi}_{t_{i}})+\tilde{r}^\Phi_{t_i}h
	]. 
\end{equation}

So far we have completed two main steps of RL: policy evaluation and policy improvement. There are two different methods for policy evaluation, corresponding to martingale loss function and martingale orthogonality condition. In our Actor-Critic algorithms, we use (\ref{update_policy}) for policy improvement. Combining policy evaluation and policy improvement we propose two Actor-Critic algorithms, and for simplicity, we call the algorithm that using martingale loss function for policy evaluation ML-AC, and the algorithm that using martingale orthogonality condition MO-AC. We present pseudo codes in Algorithm \ref{algorithm:ML-AC} and Algorithm \ref{algorithm:MO-AC}. 

\begin{algorithm}[ht]
	\caption{Actor-Critic Algorithm with Martingale Loss Function (ML-AC)}
	\label{algorithm:ML-AC}
	\begin{algorithmic}[1]
		\renewcommand{\algorithmicrequire}{\textbf{Input}}
		\Require{Environment $ Env $, initial state $ \mathbf{y}_0=(X_0,S_0,\mu_0,\mathfrak{N}) $, terminal time $ T $, time step $ h $, number of time
			grids $ G $, target penalty parameter $\varphi$, temperature parameter $\gamma$, initial learning rate $ \alpha_\Theta$ and $ \alpha_\Phi^{(1)} $, number of episode $ M $ and learning rate schedule function $ l(\cdot) $;}
		\State{Initialize action network $ \pi^\Phi $ and critic network $ V^\Theta $; }
		\For{$ m=1:M $}
		\For{$ i=0:G-1 $}
		\State{At time $ t_i=ih $, compute the value of current state $ V^\Theta(t_i,\mathbf{Y}_{t_i}) $}
		\State{Generate an action $ v_{t_i}^{\pi^\Phi} $ following the Gaussian distribution $ \pi^\Phi(\cdot\mid t_i,\mathbf{Y}_{t_i}) $; }
		\State{Take the action $ v_{t_i}^{\pi^\Phi} $, observe new state $ \mathbf{Y}_{t_{i+1}} $ and receive the reward $ r_{t_i} $ from $ Env $; }
		\EndFor
		\State{At time $ T $, clear all inventory and obtain the terminal reward $ f(\mathbf{Y}_T) $; }
		\State{Compute $ \Delta\Theta $ by (\ref{eq:ML_update}) and $ \Delta\Phi $ by (\ref{update_policy}); }
		\State{Update the critic network by $ \Theta\leftarrow\Theta+l(m)\alpha_\Theta^{(1)}\Delta\Theta $; }
		\State{Update the action network by $ \Phi\leftarrow\Theta+l(m)\alpha_\Phi\Delta\Phi $; }
		\EndFor
	\end{algorithmic}
\end{algorithm}

\begin{algorithm}[ht]
	\caption{Actor-Critic Algorithm with Martingale Orthogonality Condition (MO-AC)}
	\label{algorithm:MO-AC}
	\begin{algorithmic}[1]
		\renewcommand{\algorithmicrequire}{\textbf{Input}}
		\Require{Environment $ Env $, initial state $ \mathbf{y}_0=(X_0,S_0,\mu_0,\mathfrak{N}) $, terminal time $ T $, time step $ h $, number of time
			grids $ G $, target penalty parameter $\varphi$, temperature parameter $\gamma$, initial learning rate $ \alpha_\Theta^{(2)}$ and $ \alpha_\Phi $, number of episode $ M $ and learning rate schedule function $ l(\cdot) $;}
		\State{Initialize action network $ \pi^\Phi $ and critic network $ V^\Theta $; }
		\For{$ m=1:M $}
		\For{$ i=0:G-1 $}
		\State{At time $ t_i=ih $, compute the value of current state $ V^\Theta(t_i,\mathbf{Y}_{t_i}) $; }
		\State{Generate an action $ v_{t_i}^{\pi^\Phi} $ from the Gaussian distribution $ \pi^\Phi(\cdot|t_i,\mathbf{Y}_{t_i}) $; }
		\State{Generate $ \xi_{t_i} $ following the uniform distribution on $ (0,1) $; }
		\State{Take the action $ v_{t_i}^{\pi^\Phi} $, observe new state $ \mathbf{Y}_{t_{i+1}} $ and receive the reward $ r_{t_i} $ from $ Env $; }
		\EndFor
		\State{At time $ T $, clear all inventory and obtain the terminal reward $ f(\mathbf{Y}_T) $; }
		\State{Compute $ \Delta\Theta $ by (\ref{eq:MO_update}) and $ \Delta\Phi $ by (\ref{update_policy}); }
		\State{Update the critic network by $ \Theta\leftarrow\Theta-l(m)\alpha_\Theta^{(2)}\Delta\Theta $; }
		\State{Update the action network by $ \Phi\leftarrow\Theta+l(m)\alpha_\Phi\Delta\Phi $; }
		\EndFor
	\end{algorithmic}
\end{algorithm}

\section{Numerical simulation}
\label{section_result}
In this section, we present how we create market simulators and test our algorithms in two different environments. 
\subsection{Environment simulation}
\label{env_simulation}
\subsubsection{Simulations of market trading speed}
\label{subsubsection:mu_simulation}
Referring to the settings in \cite{cartea2015algorithmic}, we set $ \eta $ to follow an exponential distribution with parameter $ \eta_0 $. 
Glassermann has introduced two methods for simulating the geometric Brownian motion with jumps \cite{glasserman2004monte}, which can be applied here, and we choose one of it to create paths of processes. 

Fix time discretizing points $ 0=t_0<t_1<...<t_n=T $, consider the discrete form of (\ref{trading_speed}): $ \mu_{t_{i+1}}-\mu_{t_i}=-\kappa\mu_{t_i}(t_{i+1}-t_i)+\sum\limits_{j=N_{t_i}+1}^{N_{t_{i+1}}}\eta_j $, and then we follow steps below to generate samples from $ t_i $ to $ t_{i+1} $: 
\begin{enumerate}
	\item Generate $ \tilde{N} $ following a Poisson distribution with intensity $ \lambda(t_{i+1}-t_i) $. If $ \tilde{N}=0 $, then let $ \tilde{M}=0 $ and jump to step 3; 
	\item Generate $ \eta_1,...,\eta_{\tilde{N}} $ from their distribution and let $ \tilde{M}=\eta_1+...+\eta_{\tilde{N}} $;
	\item Let $ \mu_{t_{i+1}}=(1-\kappa(t_{i+1}-t_i))\mu_{t_i}+\tilde{N} $.
\end{enumerate}
This method mainly relies on the properties of Poisson process. The left in Figure \ref{fig:10samples} shows ten samples of the market trading speed process gained in this way. 
\subsubsection{Simulation of the interaction between the agent and environment}
We need to simulate the market based on (\ref{enviroment}), so first set six environment parameters $ (\kappa_{env},\ \lambda_{env},\ \eta_{env},\ k_{env},\ b_{env},\ \alpha_{env}) $, which are unknown for the agent. Besides, there are three known penalty parameters need to be set:  $ (\varphi, \rho, \gamma) $.

The agent has knowledge of $ (S_t,X_t,Q_t,\mu_t) $ at time $ t $, determines the trading speed $ v $ based on it, and then gets the target penalty: $ -\varphi(v-\rho\mu_t)^2h $. In the same time, the environment receives the action and update the state:  
\begin{equation*}
	\begin{aligned}
		&S_{t+h}=S_t-b_{env}vh+\sigma\sqrt{h}\tilde{Z}_t,\qquad \tilde{Z}_t\sim\mathcal{N}(0,1),\\
		&\hat{S}_{t+h}=S_{t+h}-k_{env}v,\qquad X_{t+h}=X_t+\hat{S}_{t+h}vh,\qquad Q_{t+h}=Q_t-vh, 
	\end{aligned}
\end{equation*}
and $ \mu_{t+h} $ is obtained as described in Subsection \ref{subsubsection:mu_simulation}. The right in Figure \ref{fig:10samples} shows ten samples of the stock price process while the agent takes the optimal policy. 

\begin{figure}[ht]
	\centering
	\begin{minipage}[t]{0.49\linewidth}
		\includegraphics[width=0.95\linewidth]{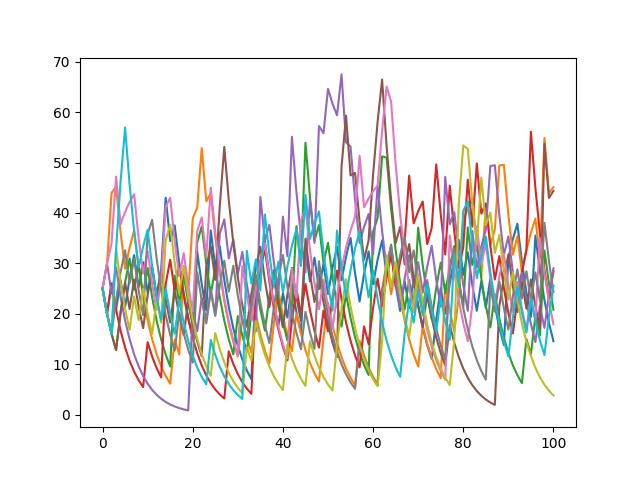}
		%	\caption{Ten samples of market trading speed process. }
		%	\label{fig:mu_sample}
	\end{minipage}
	\begin{minipage}[t]{0.49\linewidth}
		\includegraphics[width=0.95\linewidth]{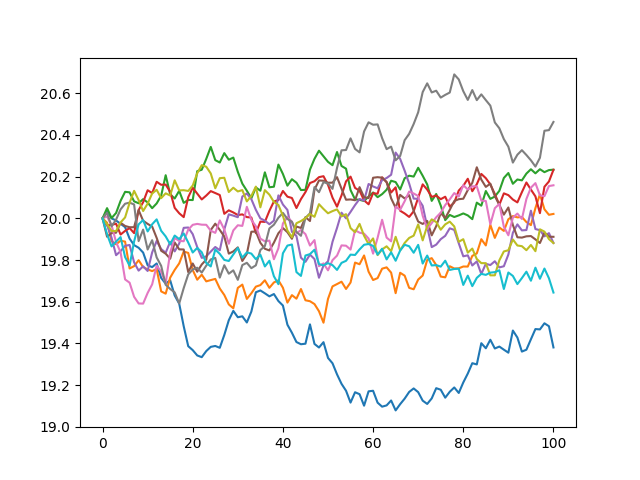}
		%	\caption{Ten samples of stock price process. }
		%	\label{fig:price_sample}
	\end{minipage}
	\caption{Ten samples of market trading speed process (left) and stock price process (right). }
	\label{fig:10samples}
\end{figure}

\subsection{Parameter estimation}
\label{param_estimation}
Recall that we need to estimate some parameters by collecting data from the market estimator, since $\hat{b}$ and $\hat{k}$ are part of the loss function in ADP. 

In this model, we assume that the stock price process is driven by a Brownian motion $ W_t $ and satisfies $ \d S^v_t=-bv_t\d t+\sigma \d W_t $. We estimate the parameter $ b $ by the maximum likelihood estimation method. $ t_0, t_1,..., t_N $ are given observation points in time on $ [0,T] $, and we apply the Euler-Maruyama discretization to get $ S_{t_{i+1}}-S_{t_i}=-bv_{t_i}(t_{i+1}-t_i)+\sigma(W_{t_{i+1}}-W_{t_i}) $. Since $ S_{t_{i+1}}-S_{t_i}\sim\mathcal{N}(-bv_{t_i}(t_{i+1}-t_i),\sigma^2(t_{i+1}-t_i)) $, we obtain maximum likelihood functions of $ b $, which is
\begin{equation*}
	L(b)=\prod\limits_{i=0}^{N-1}\dfrac{1}{\sqrt{2\pi\sigma^2(t_{i+1}-t_i)}}\exp\left(-\dfrac{(S_{t_{i+1}}-S_{t_i}+bv_{t_i}(t_{i+1}-t_i))^2}{2\sigma^2(t_{i+1}-t_i)}\right). 
\end{equation*}
Take the logarithm of the above equation to have the logarithm likelihood function $ l(b) $,
calculate the partial derivative with respect to $ b $, and 
let $ \frac{\partial l}{\partial b}=0 $ to get the estimation of $ b $,  
\begin{equation*}
	\hat{b}=\dfrac{\sum\limits_{i=0}^{N-1}(S_{t_{i+1}}-S_{t_i})v_{t_i}}{\sum\limits_{i=0}^{N-1}v_{t_i}^2(t_{i+1}-t_i)}. 
\end{equation*}
Next we get the value of $ k $, which equals to $ (S_{t_i}-\hat{S}_{t_i})/v_{t_i} $. 

\subsection{Simulation results}
By using the simulation methods in Subsection \ref{env_simulation}, we are able to generate interactive market environment simulators. Referring to the parameter settings in Section 9.2 of \cite {cartea2015algorithmic}, we establish two different environments to test our algorithms, and the specific parameter settings are shown in Table \ref{param_table1} and Table \ref{param_table2}. 

In Environment 1, the impact of the agent's trading and target penalty parameter $\varphi$ are modest, while the the terminal penalty parameter $\alpha$ is relatively large. 
\begin{table}[H]
	\centering
	\caption{The parameter settings of Environment 1. }
	\begin{tabular}{|c|c|c|c|c|c|c|}
		\hline
		$ S_0 $ & $ Q_0 $ & $ X_0 $ & $ \mu_0 $ & $ b $ & $ k $ &$\varphi$\\ \hline
		20 & 1.25 & 0 & 25 & 0.1 & 0.1 &0.1\\ \hline
		$ \sigma $ & $ \alpha $ & $ \rho $ & $ \lambda $ & $ \eta_0 $ & $ \kappa $&$\gamma$ \\ \hline
		0.5 & 100 & 0.02 & 50 & 10 & 20 &0.001\\ \hline
	\end{tabular}
 \label{param_table1}
\end{table}
As a comparison, we set up an environment with larger price impacts and target penalty during the process of transaction, but lower terminal penalty parameter in Environment 2.  
\begin{table}[!ht]
	\centering
	\caption{The parameter settings of Environment 2. }
	\label{param_table2}
	\begin{tabular}{|c|c|c|c|c|c|c|}
		\hline
		$ S_0 $ & $ Q_0 $ & $ X_0 $ & $ \mu_0 $ & $ b $ & $ k $ &$\varphi$\\ \hline
		20 & 1.25 & 0 & 25 & 0.5 & 0.5 &10\\ \hline
		$ \sigma $ & $ \alpha $ & $ \rho $ & $ \lambda $ & $ \eta_0 $ & $ \kappa $ &$\gamma$\\ \hline
		0.5 & 10 & 0.02 & 50 & 10 & 20&0.001 \\ \hline
	\end{tabular}
\end{table}

We apply the same network structure in both environments. Specifically, we set up three hidden layers with 128, 64, and 32 units per layer in order, and a step function as reduction in the learning rate, which means reducing the learning rate by 0.9 after 
every 10 epochs. The input for both action and critic networks is the current state. The critic network output is the value of current state, and the action network output is a two-dimensional array, representing the mean value and the standard deviation of our policy. 
%The hyperparameters used in algorithms are shown in \ref{Hyperparameters}. 
For each algorithm, we train it 1000 epoches with five different random seeds to test the universality of it. After each training session, we run five independent trials, with the solid curves corresponding to the mean and the shaded area corresponding to the minimum and maximum returns. We also compare our algorithm with SAC, a popular RL algorithm proposed by Haarnoja et al. \cite{haarnoja2018soft}, that introduces entropy regularization to encourage exploration.

\begin{figure}[H]
	\centering
	\subfigure[ADP] {\includegraphics[width=.32\textwidth]{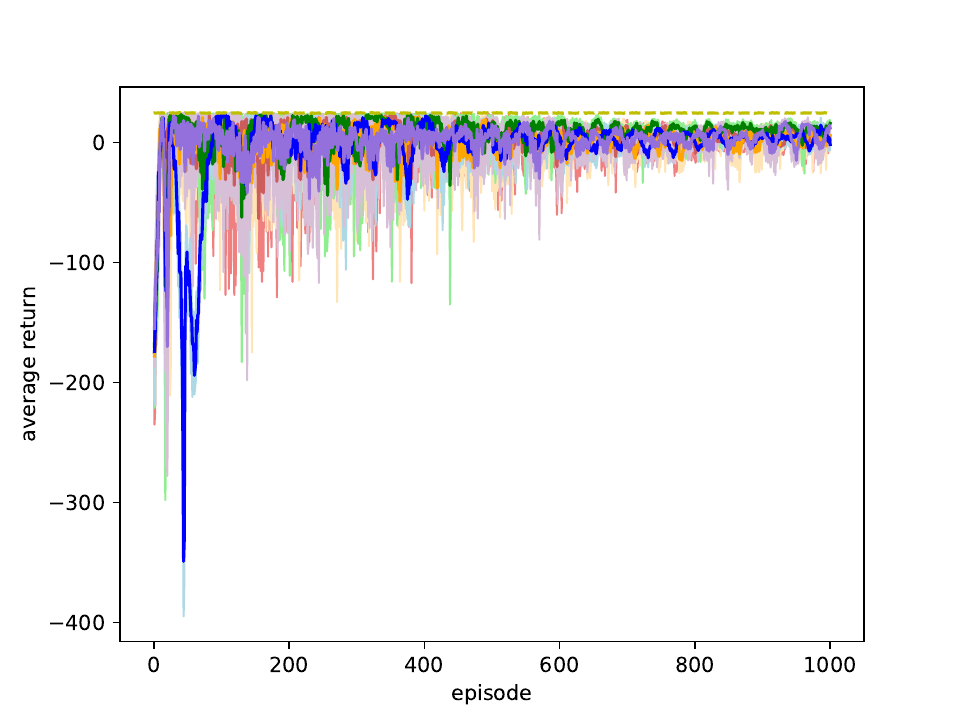}\label{pic:ADHDP_return1}}
	\subfigure[exploratory ADP] {\includegraphics[width=.32\textwidth]{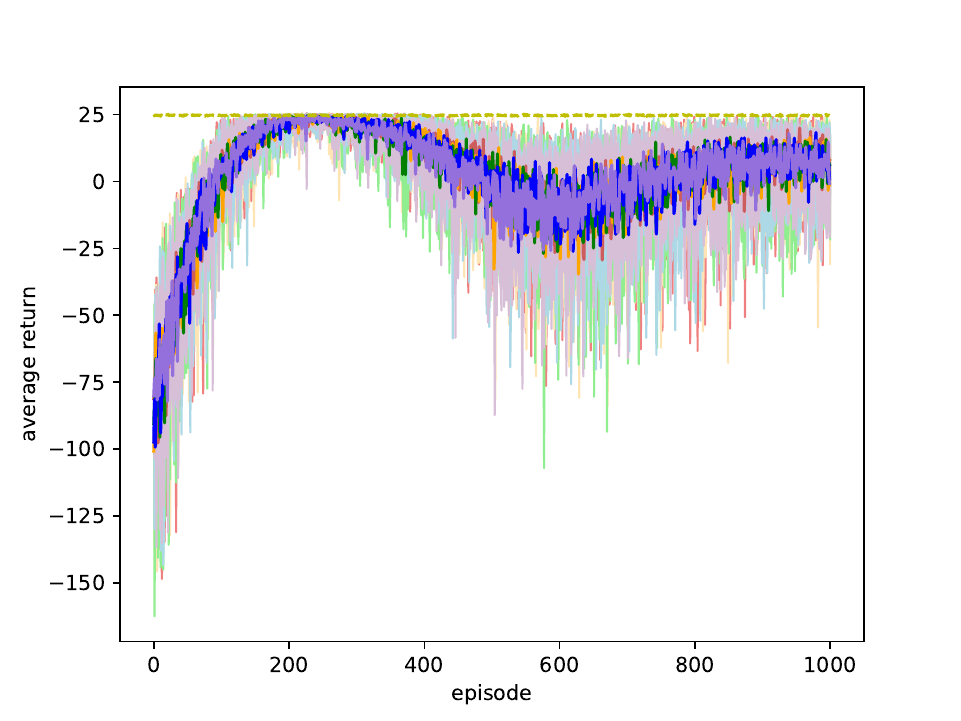}}
	\subfigure[SAC] {\includegraphics[width=.32\textwidth]{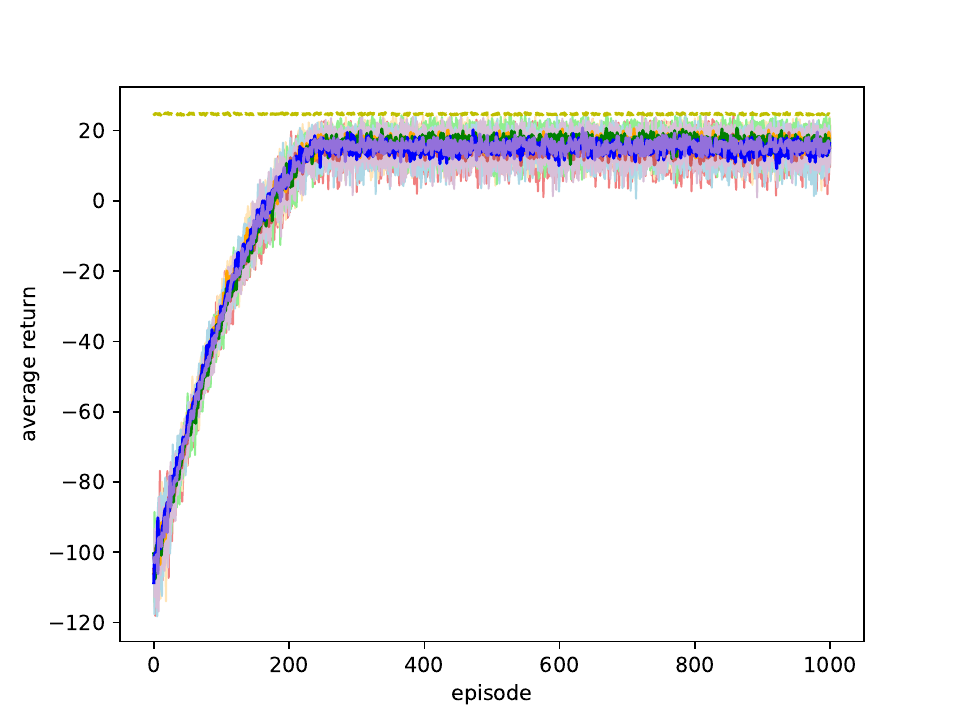}}

	\subfigure[ML-AC] {\includegraphics[width=.32\textwidth]{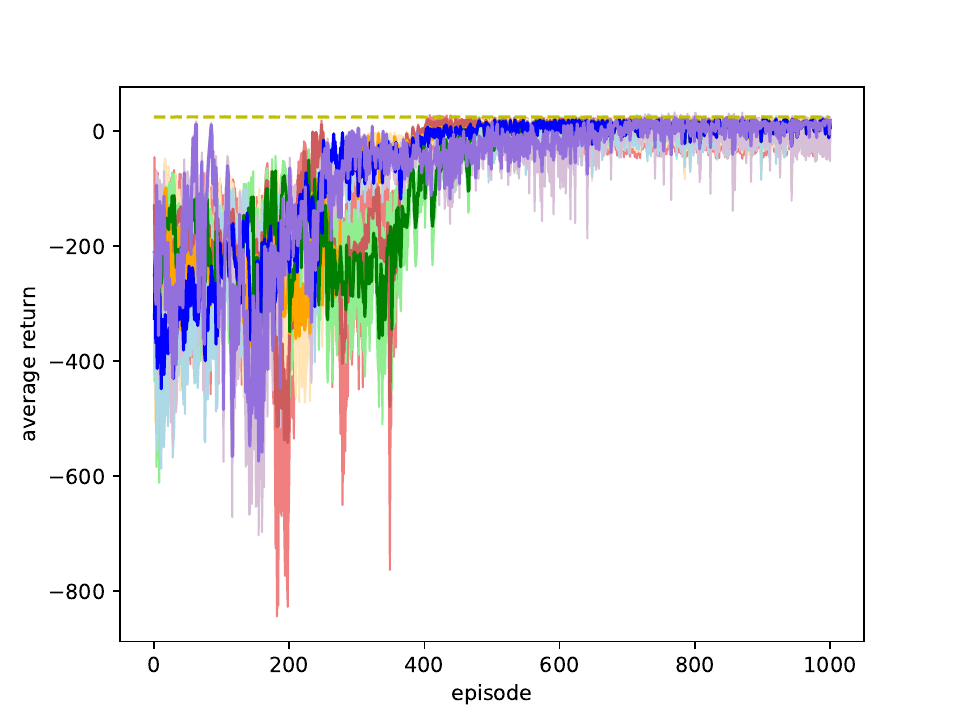}\label{pic:ML-AC_return1}}
	\subfigure[MO-AC] {\includegraphics[width=.32\textwidth]{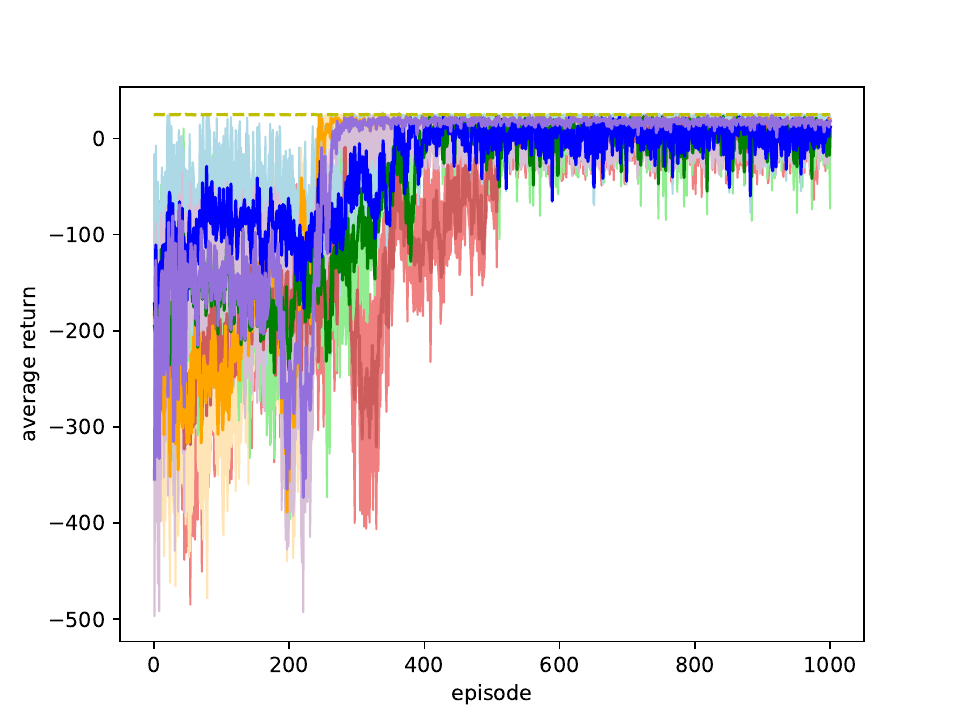}\label{pic:MO-AC_return1}}
	
	\subfigure[ADP] {\includegraphics[width=.32\textwidth]{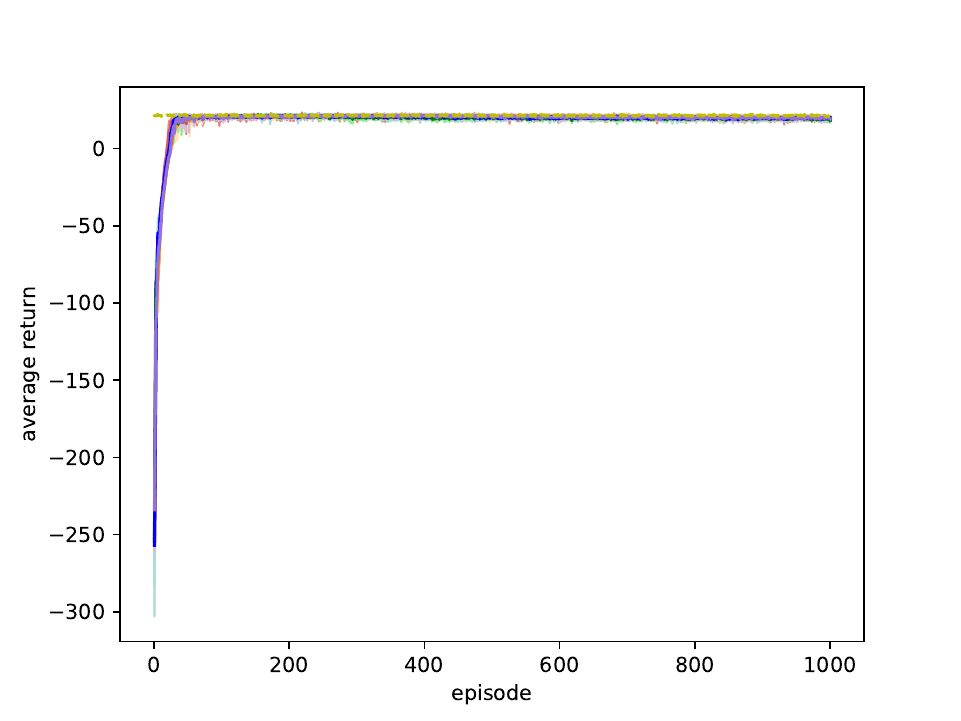}}
	\subfigure[exploratory ADP] {\includegraphics[width=.32\textwidth]{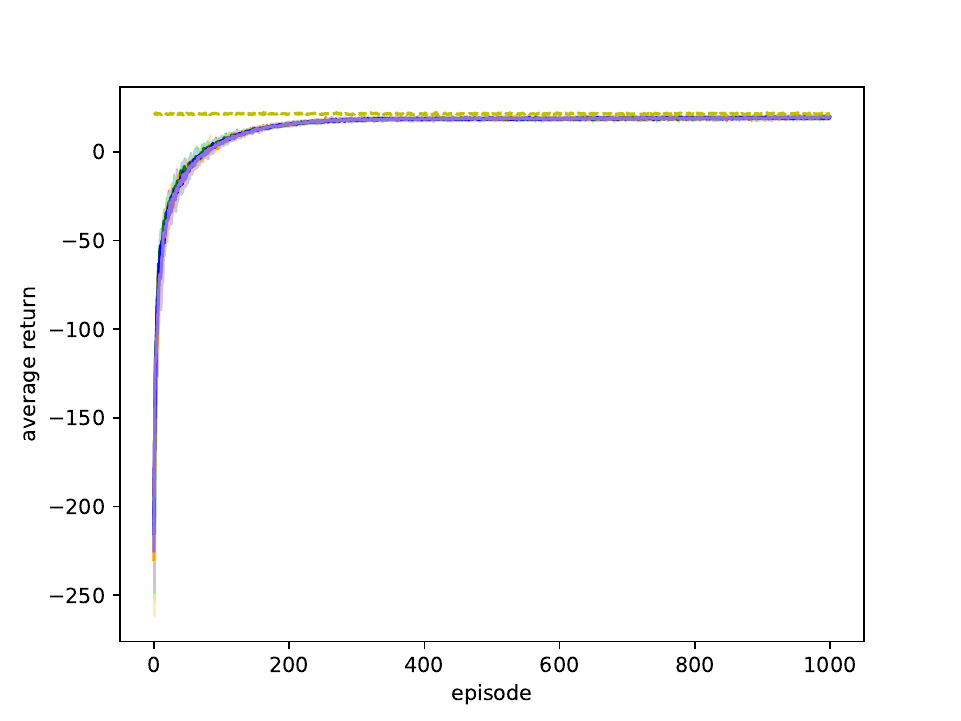}}
	\subfigure[SAC] {\includegraphics[width=.32\textwidth]{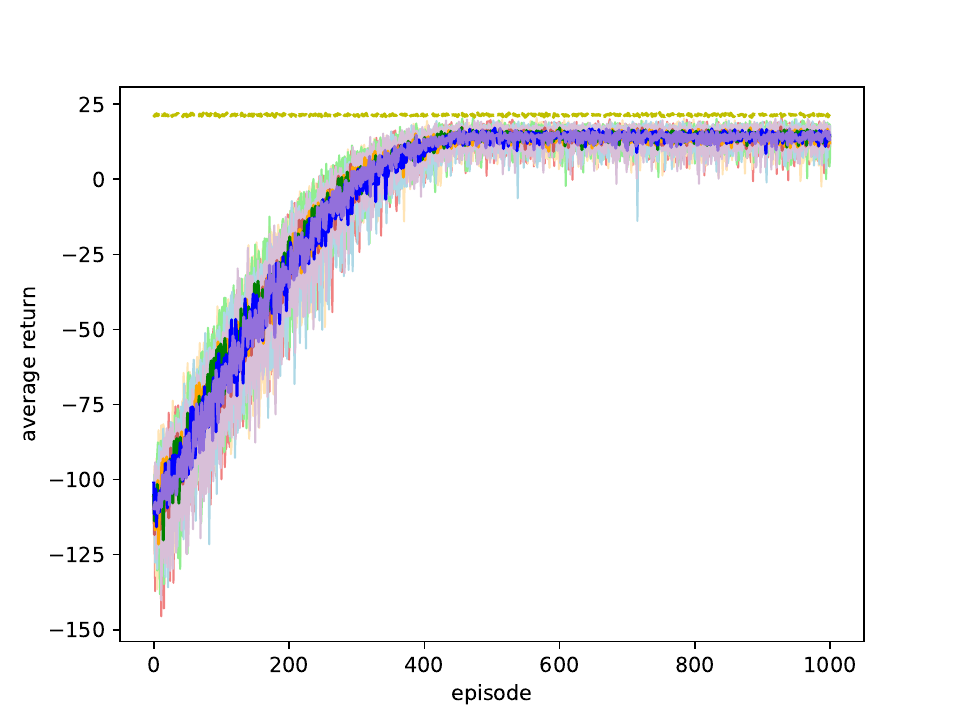}}

	\subfigure[ML-AC] {\includegraphics[width=.32\textwidth]{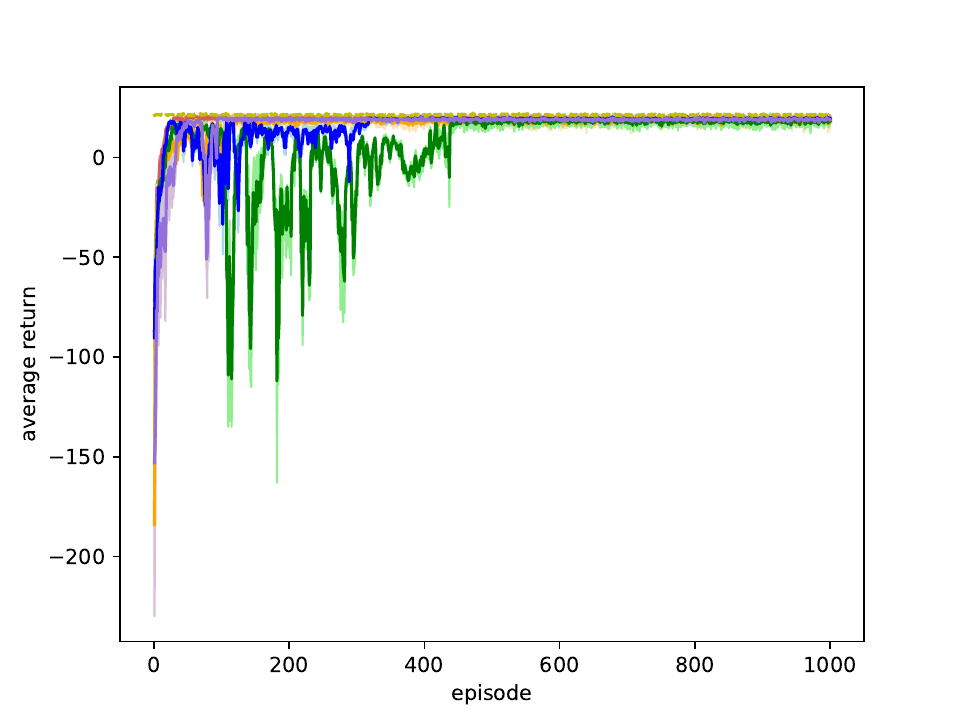}}
	\subfigure[MO-AC] {\includegraphics[width=.32\textwidth]{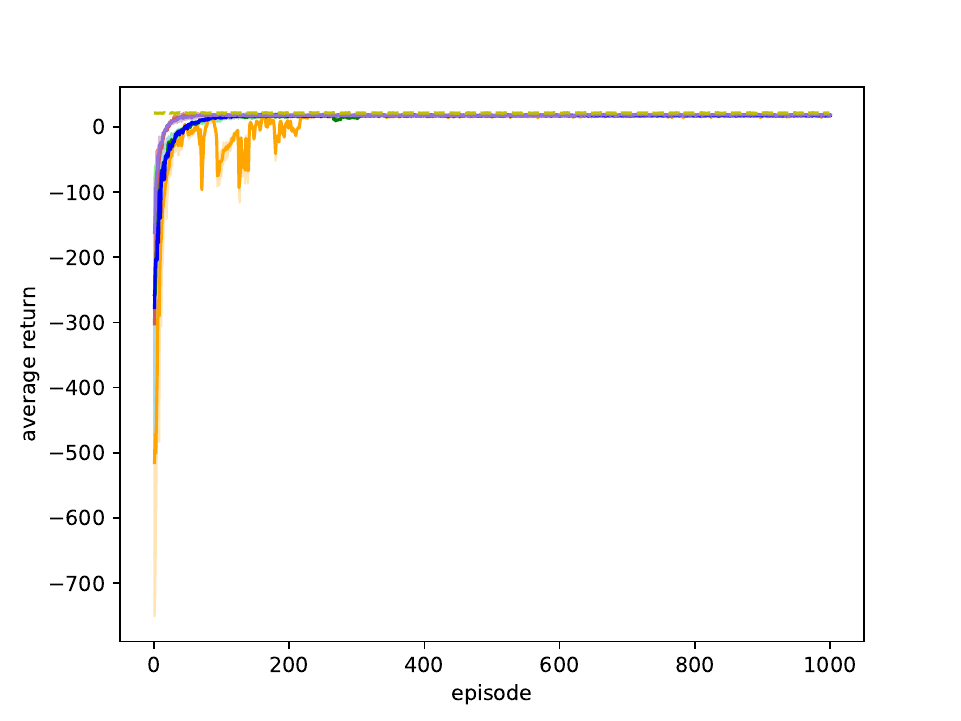}}
	\caption{Training curves in Environment 1 and 2. The different colors represent training results under different random seeds, and the yellow dashed line indicates the return gained by the optimal policy. The top five figures (Figure(a)-(e)) show the training results in Environment 1, and the bottom five figures (Figure(f)-(j)) show the training results in Environment 2. The solid curves indicates the mean value of the five out-of-sample tests at the end of each training session, and the shading covers the area between the minimum and maximum values of the five tests. }
	\label{pic:ave_return1}
\end{figure}

\begin{figure}[H]
	\centering
	\subfigure[ADP] {\includegraphics[width=.32\textwidth]{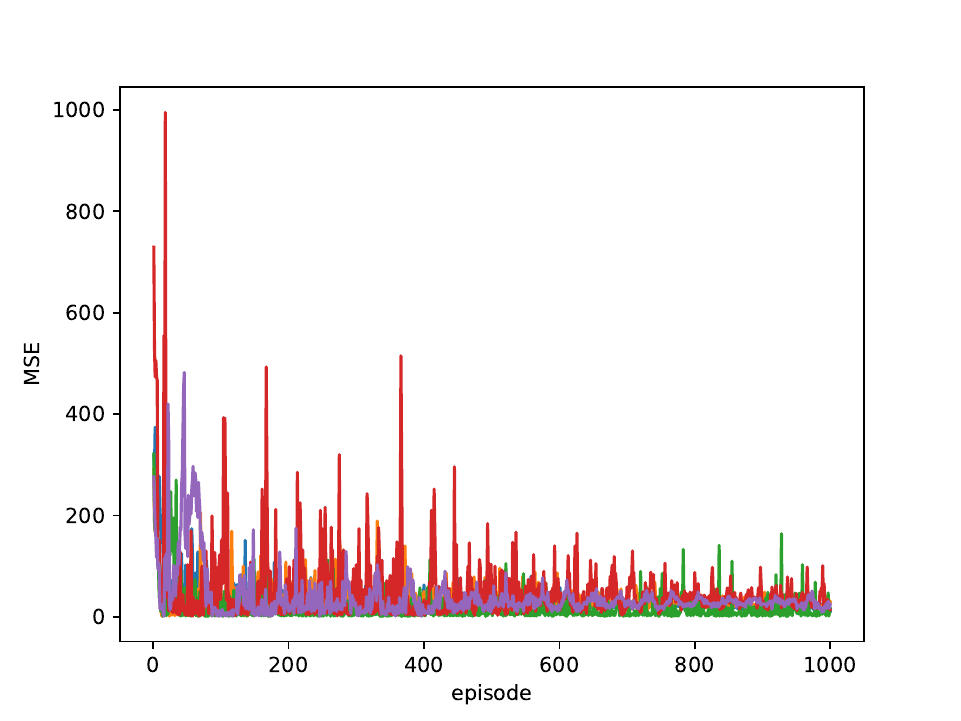}}
	\subfigure[exploratory ADP] {\includegraphics[width=.32\textwidth]{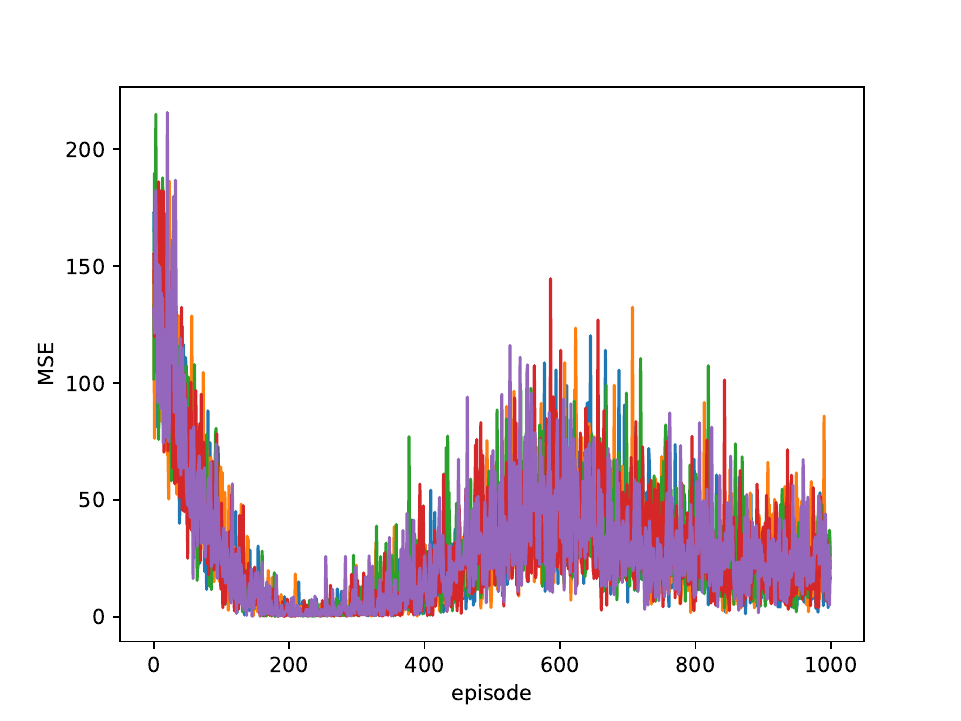}}
	\subfigure[SAC] {\includegraphics[width=.32\textwidth]{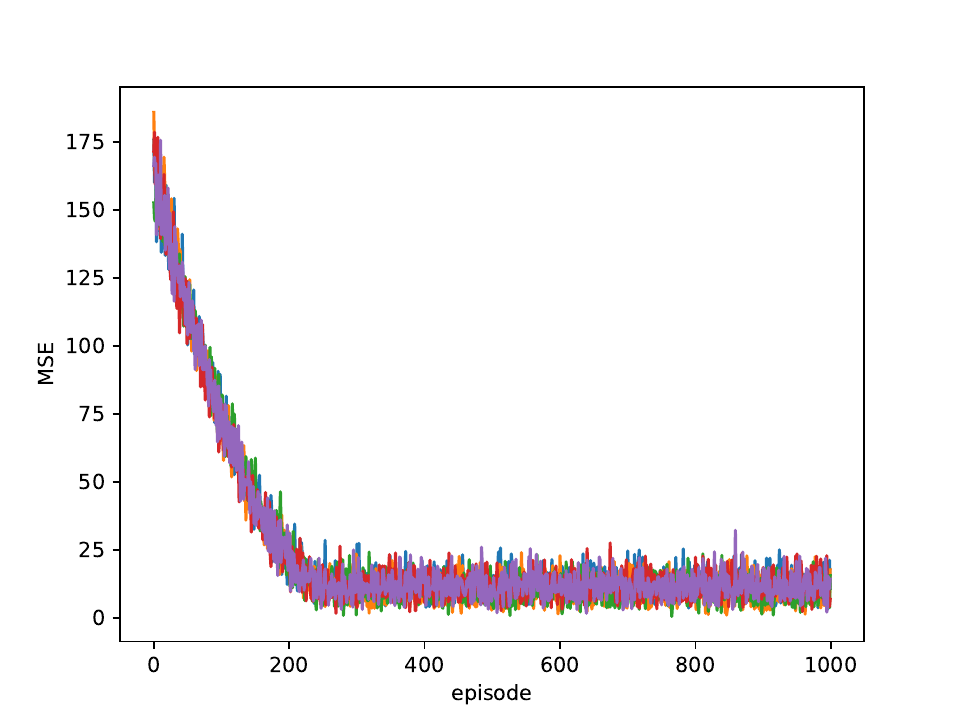}}

	\subfigure[ML-AC] {\includegraphics[width=.32\textwidth]{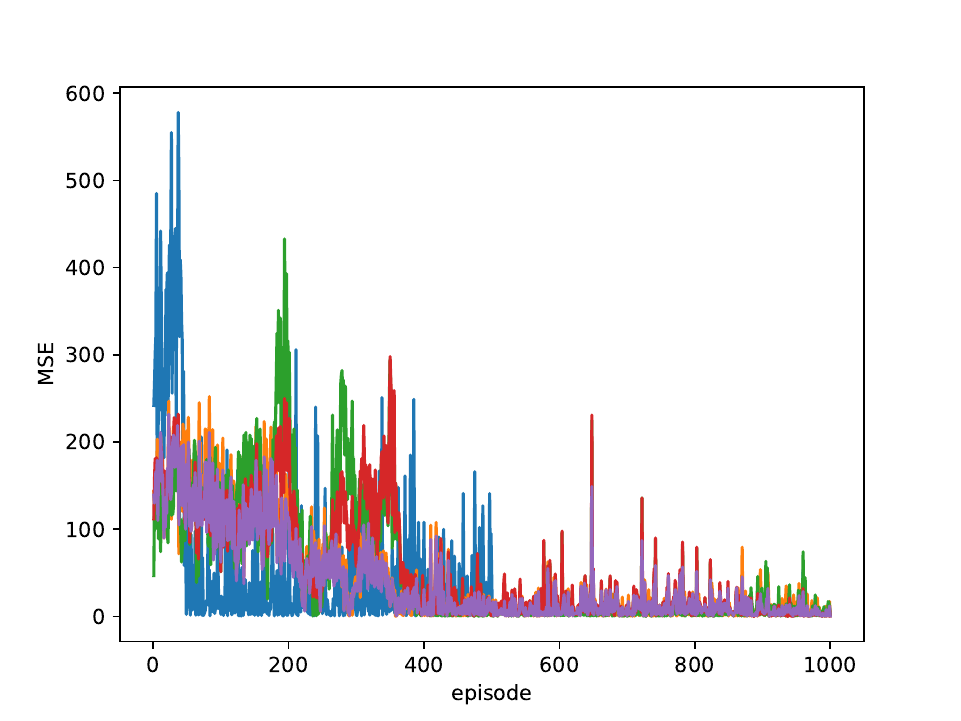}}
	\subfigure[MO-AC] {\includegraphics[width=.32\textwidth]{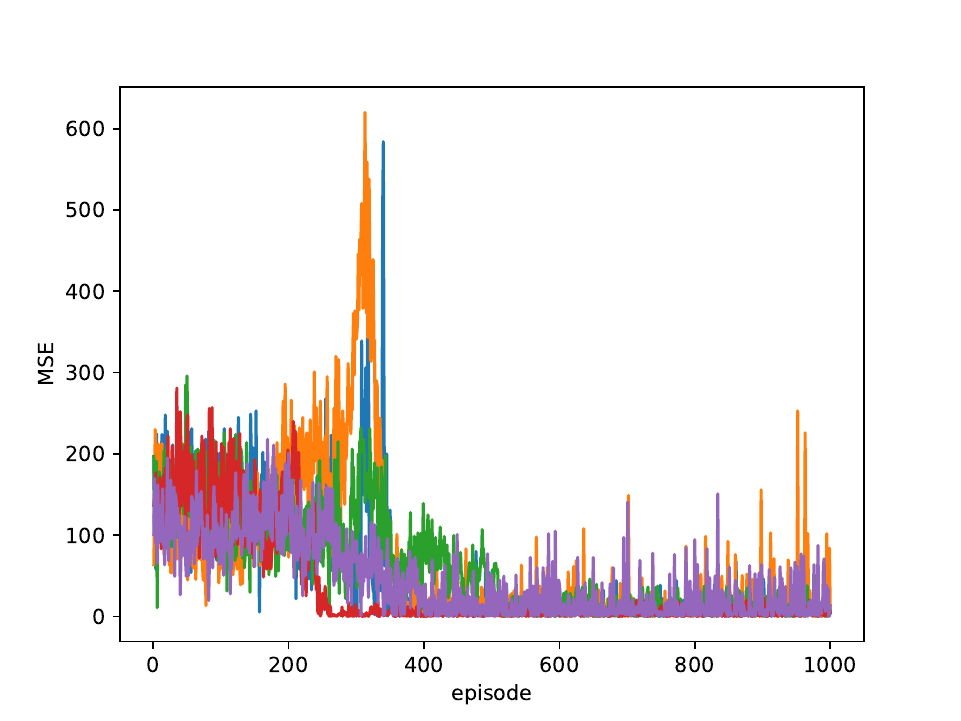}}
	
	\subfigure[ADP] {\includegraphics[width=.32\textwidth]{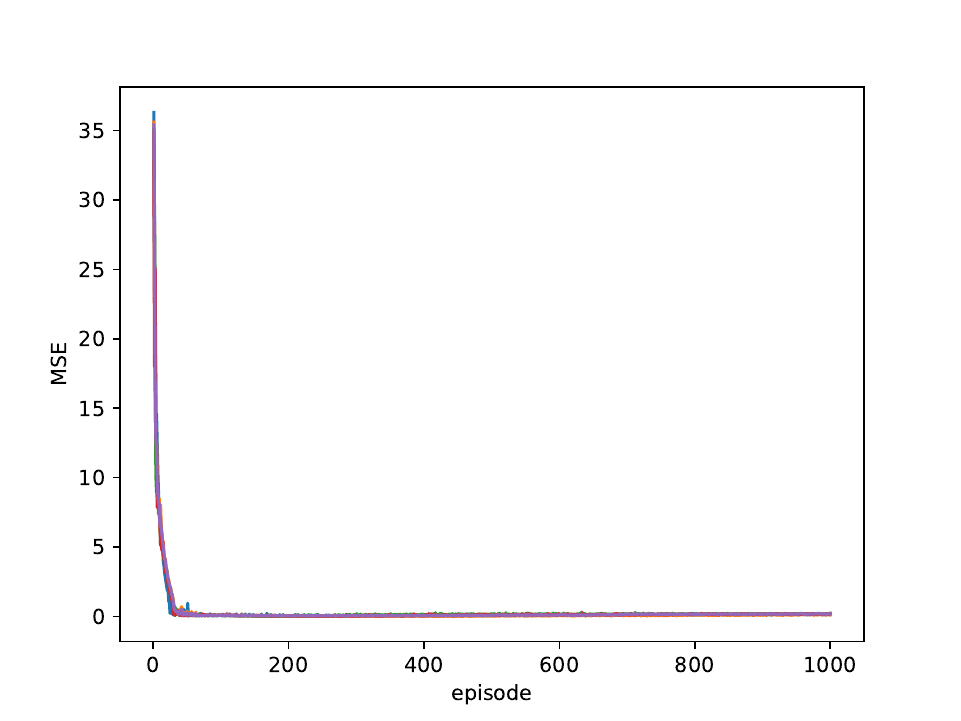}}
	\subfigure[exploratory ADP] {\includegraphics[width=.32\textwidth]{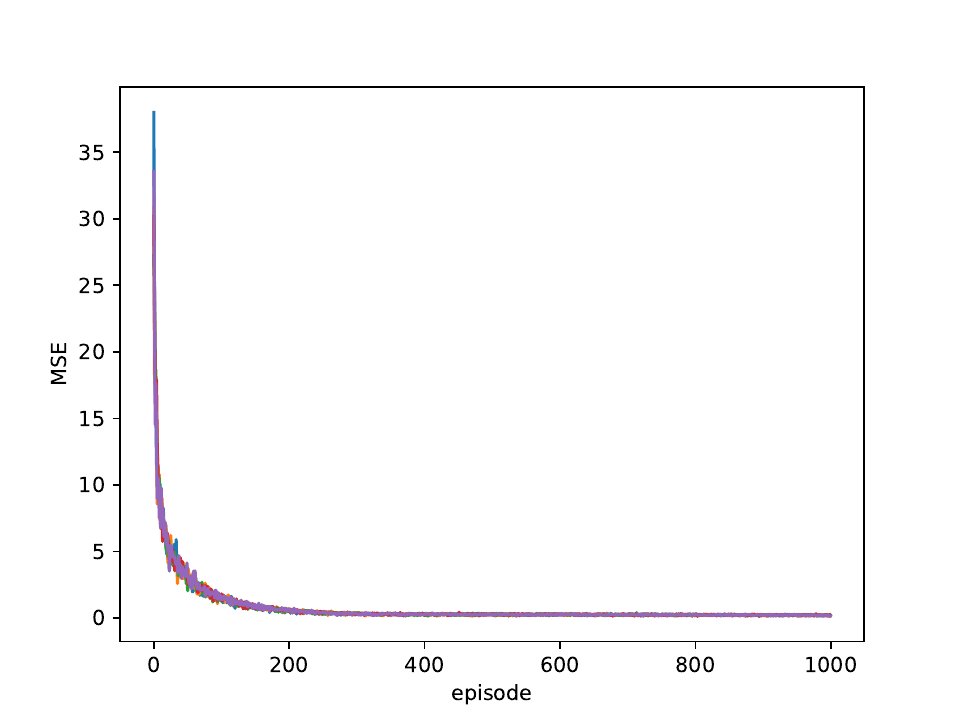}}
	\subfigure[SAC] {\includegraphics[width=.32\textwidth]{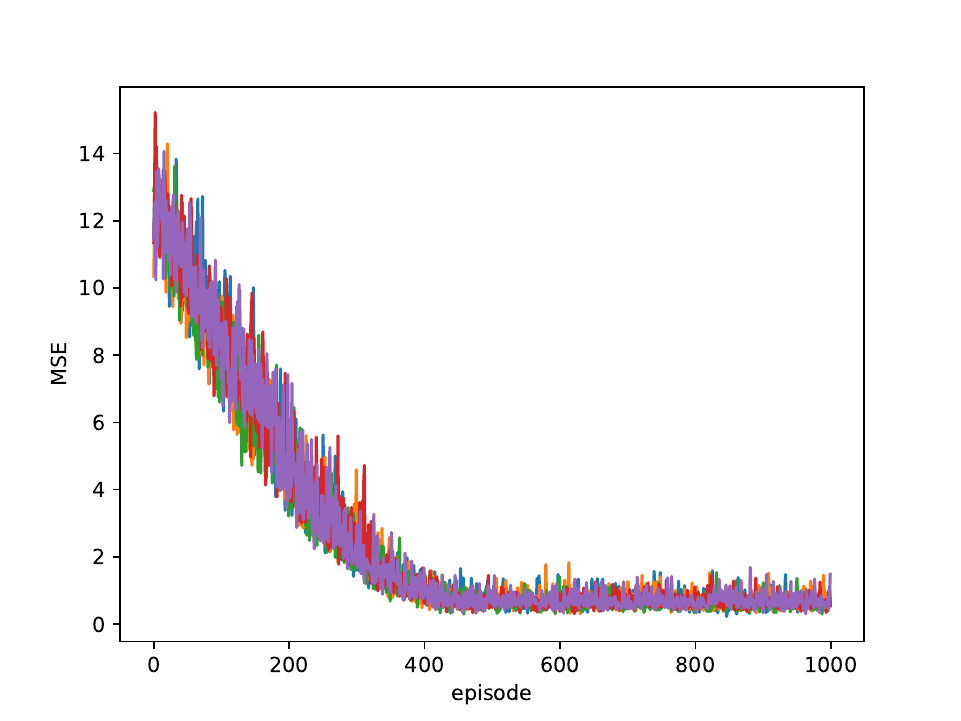}}

	\subfigure[ML-AC] {\includegraphics[width=.32\textwidth]{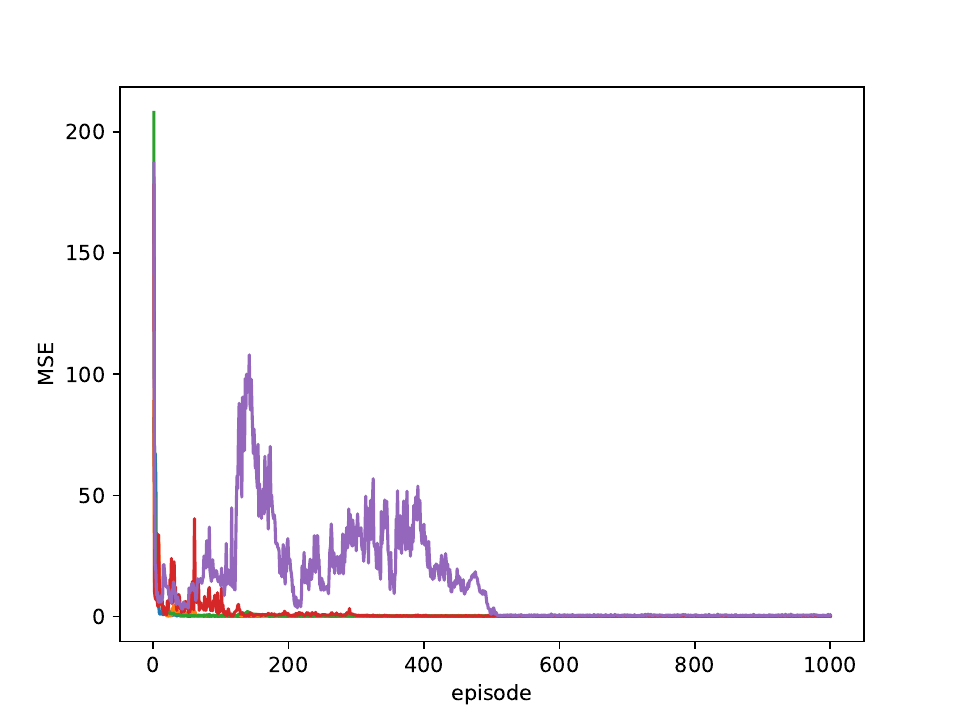}}
	\subfigure[MO-AC] {\includegraphics[width=.32\textwidth]{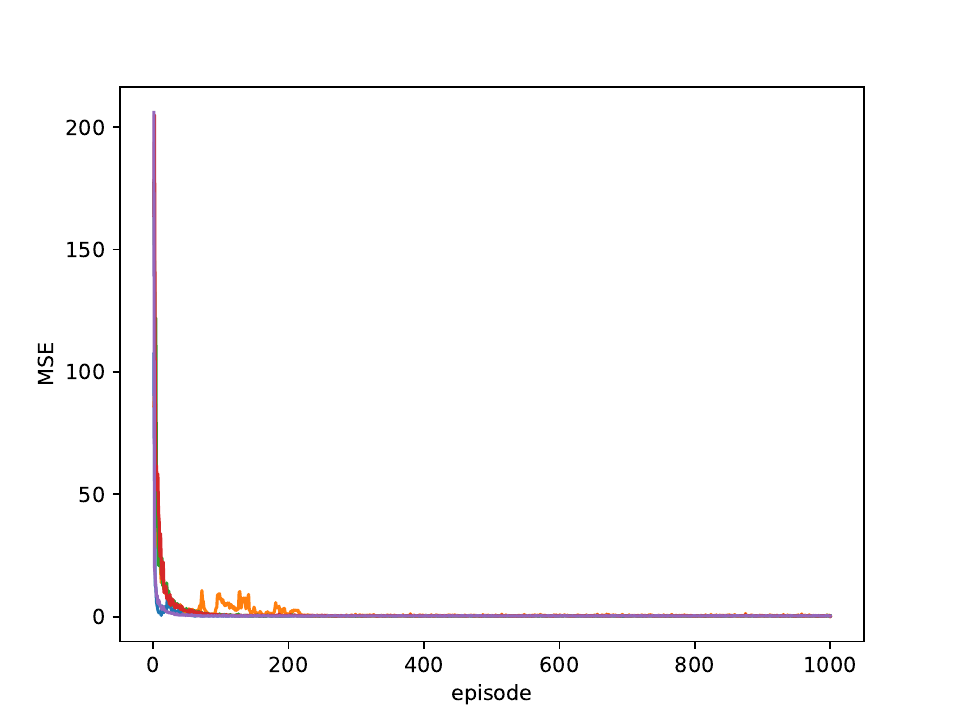}}
	\caption{MSE between the mean value and optimal policy in Environment 1 and 2. The top five figures (Figure(a)-(e)) show the training results in Environment 1, and the bottom five figures (Figure(f)-(j)) show the training results in Environment 2. The different colors represent training results under different random seeds.}
	\label{pic:error1}
\end{figure}
Training curves in Environment 1 are shown in the top five figures of Figure \ref{pic:ave_return1}. Compared to the other three algorithms, we see from Figure \ref{pic:ML-AC_return1} and Figure \ref{pic:MO-AC_return1} that average returns gained by two RL algorithms fluctuate at the beginning. The main reason is that we only use one sample to approximate the gradient, so the direction may have a certain degree of randomness. While with different random seeds, both of them converge and the average returns are close to which gained by the optimal policy. Since we obtain a stochastic policy to encourage more exploratory, the final average returns fluctuates relatively more intense compared with ADP shown in Figure \ref{pic:ADHDP_return1}, and returns gained by exploratory ADP are also more volatile because of the addition of exploration. SAC is able to learn as well, but the final returns are not as good as those obtained by MO-AC. We also calculate MSE between learned policies and the optimal policy, and the results are shown in the top five figures of Figure \ref{pic:error1}, which gives supports for Theorem \ref{thm2} since MSE tends to zero after 1000 episodes. 

Moreover, we choose TWAP as the benchmark, a static strategy that only depends on initial inventory and the length of time interval. The trading speed of TWAP is given by
\begin{equation*}
	v^{\text{TWAP}}_t=\dfrac{\mathfrak{N}}{T},\qquad \forall t\in[0,T]. 
\end{equation*}
As for different algorithms, their performance over TWAP is defined by
\begin{equation*}
	\Delta \text{PnL}=\dfrac{X_T^{\text{Agent}}-X_T^{\text{TWAP}}}{X_T^{\text{TWAP}}}. 
\end{equation*}
$ X_T $ with different superscripts represent the terminal cash gained by different policies. After training, we choose the best one according to the performance criteria $\Delta $PnL, and compare its performance with TWAP. Specifically, we conduct 100 independent tests to calculate the mean and standard deviation.

\begin{table}[!ht]
	\centering
	\caption{Comparison with benchmark: TWAP in Environment 1. The numbers in the table show the mean value of the results over one hundred out-of-sample experiments, and the standard deviations are shown in brackets. }
	\begin{tabular}{|c|c|c|c|c|c|}
		\hline
		~ & ADHDP&exploratory ADP& SAC& ML-AC & MO-AC \\ \hline
		\multirow{2}{*}{Average Return} & 12.863 &8.308&14.003& 10.354 &17.347 \\ 
		&(4.112)&(9.432)&(3.375)&(23.622)&(8.115)\\\hline
		\multirow{2}{*}{$\Delta \text{PnL}$} & -0.479&-0.663&-0.434& -0.580 & -0.298  \\ 
		~&(0.167)&(0.383)&(0.137)&(0.957)&(0.327)\\\hline
	\end{tabular}
	
	\label{benchmark1}
\end{table}

According to Table \ref{benchmark1}, MO-AC performs best among these algorithms. It shows that the designed algorithms indeed learn knowledge from the environment and perform well on the test set. This results also indicate us that in Environment 1, where the terminal penalty parameter is large, TWAP is a better choice for agents to gain more return. Actually, as demonstrated in \cite{cartea2015algorithmic}, if we assume that the agent does not affect the midprice and is adamant to clear all inventory by time $ T $, the optimal policy is to liquidate at a constant rate, which is exactly TWAP. 

The distribution of exploratory policies at each point in time is shown in Figure \ref{heat_map1}. The shade of colors represents the likelihood of taking corresponding actions, and the darker color shows the higher probability. Comparing the results of four learning algorithms with exploratory in more details, we discover that the optimal policy learned by exploratory ADP fluctuates greatly at different moments, while SAC is relatively stable. Observing the optimal policies obtained by the two RL algorithms, the closer to the terminal time, the larger the actions given by policies. It is consistent with the setting of a larger penalty parameter $\alpha$ for terminal liquidation in Environment 1, but the first two algorithms do not learn this feature. In addition, we find that MO-AC prefers a policy with less randomness. One possible reason is that MO-AC is designed to make sure the expectations of two adjacent states are equal. However, ML-AC is designed to ensure the expectation of each state and the termination state is equal. On the other hand, MO-AC algorithm could deal uncertainty better since we choose $ \xi_{t_i} $ randomly during training. 
\begin{figure}[H]
	\centering
	\subfigure[Action taken by exploratory ADP] {\includegraphics[width=.48\textwidth]{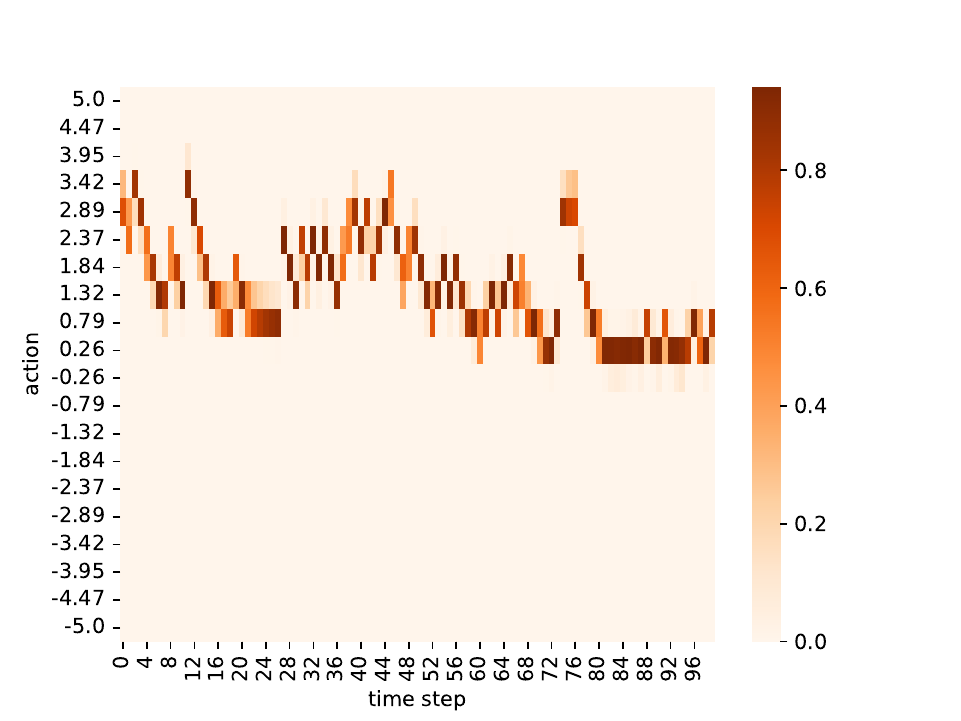}}
	\subfigure[Action taken by SAC] {\includegraphics[width=.48\textwidth]{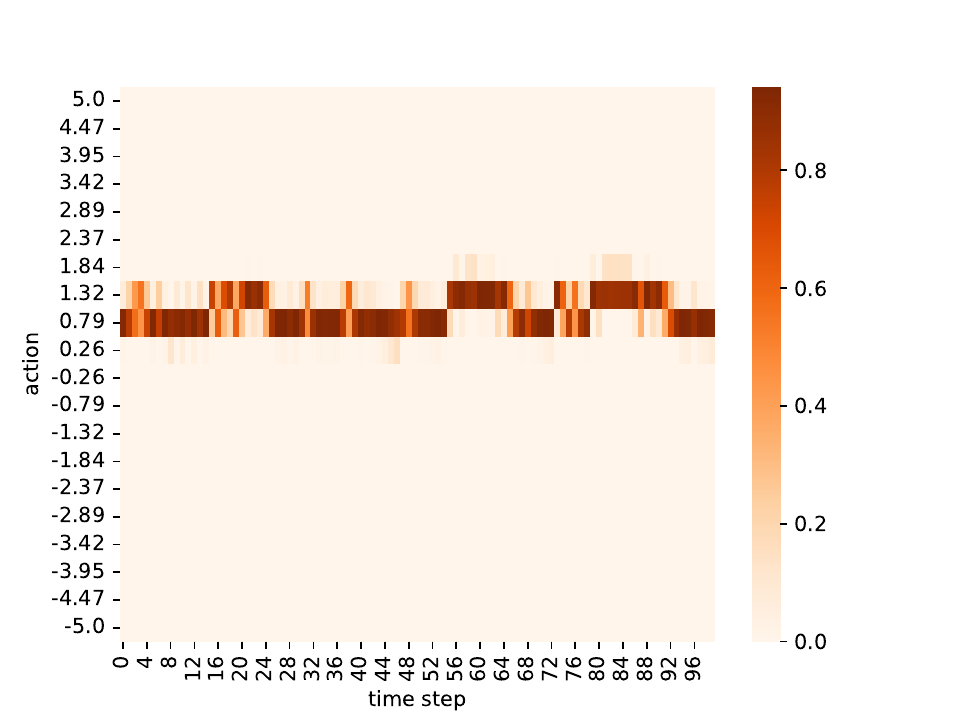}}

	\subfigure[Action taken by ML-AC ] {\includegraphics[width=.48\textwidth]{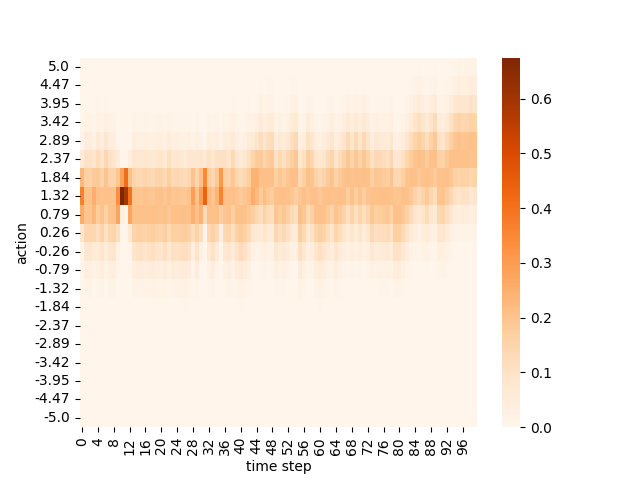}}
	\subfigure[Action taken by MO-AC ] {\includegraphics[width=.48\textwidth]{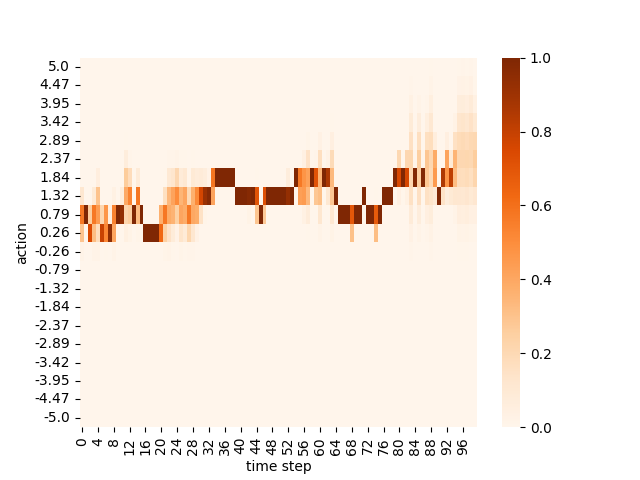}}
	\caption{The optimal exploratory policy in Environment 1. The shade of colors represents the likelihood of taking the corresponding action at that moment, with darker colors representing greater probability.}
	\label{heat_map1}
\end{figure}

The training results are rather different in Environment 2 as shown in the bottom five figures of Figure \ref{pic:ave_return1}. All of the algorithms except SAC converge quickly and the volatility decreases considerably compared with Environment 1, but the policies learned by SAC are still as volatile as the results in Environment 1. Since the price impacts in Environment 2 are larger, the agent prefers to perform more conservative to avoid large impacts. 

\begin{figure}[ht]
	\centering
	\subfigure[Action taken by exploratory ADP ] {\includegraphics[width=.48\textwidth]{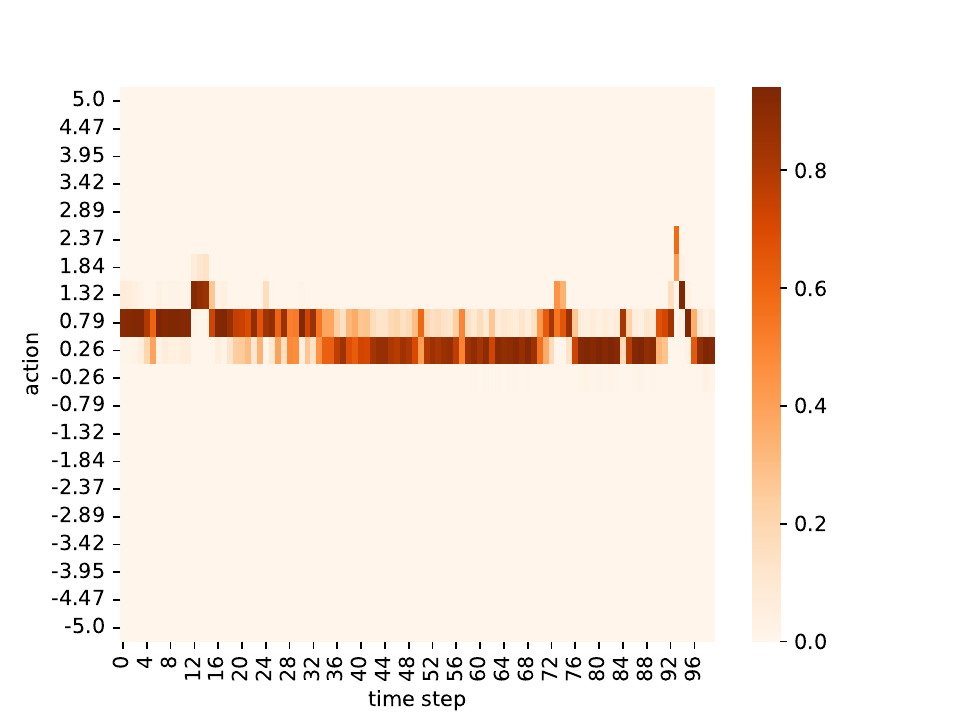}}
	\subfigure[Action taken by SAC ] {\includegraphics[width=.48\textwidth]{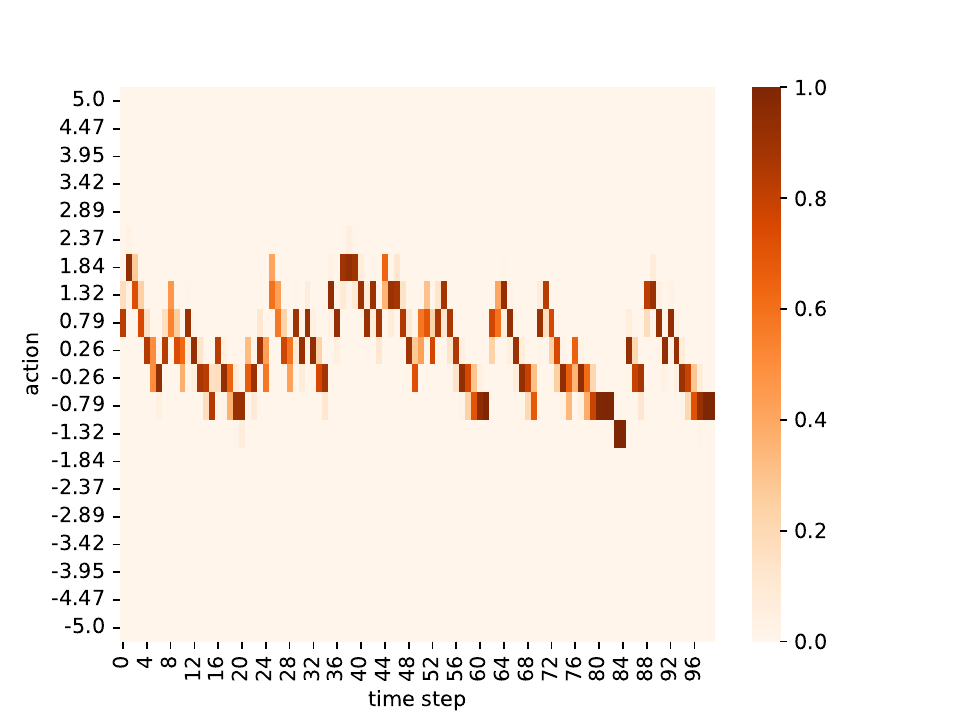}}

	\subfigure[Action taken by ML-AC ] {\includegraphics[width=.48\textwidth]{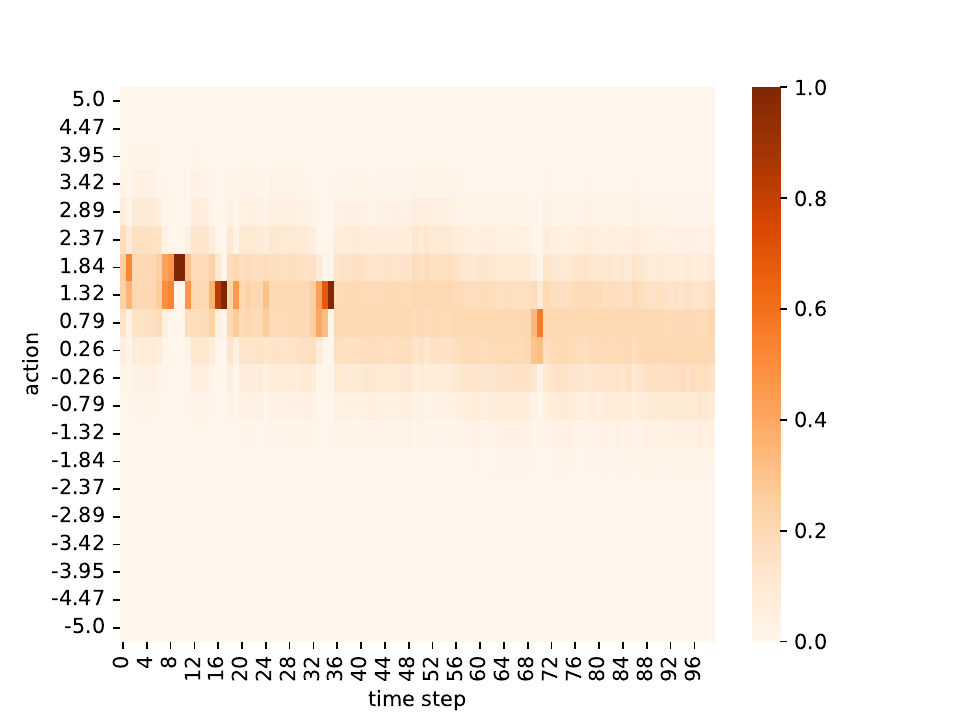}}
	\subfigure[Action taken by MO-AC ] {\includegraphics[width=.48\textwidth]{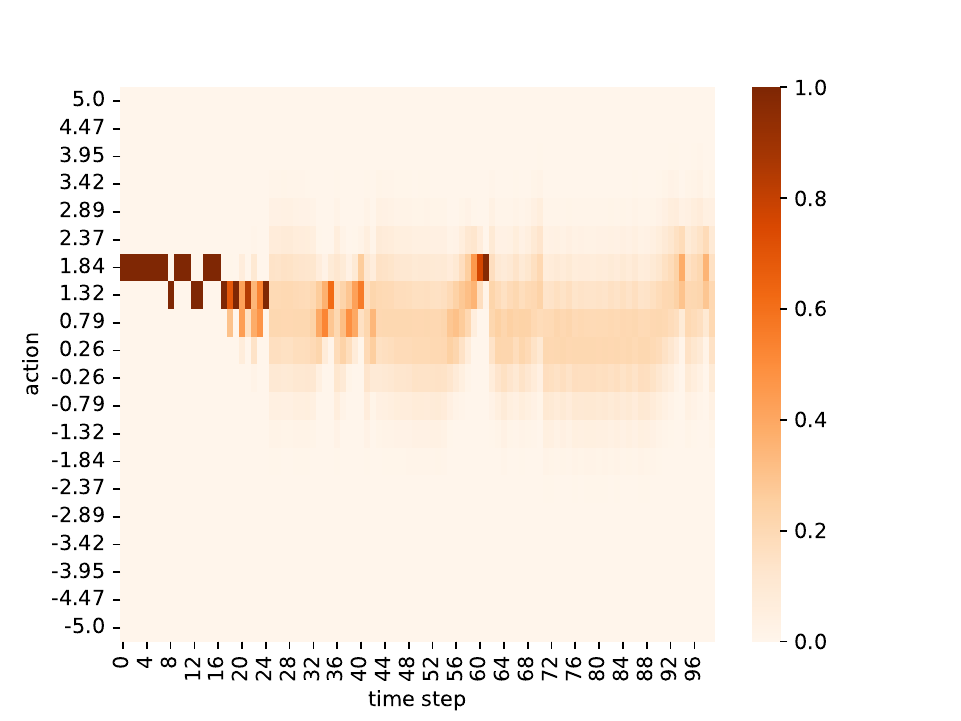}}
	\caption{The optimal exploratory policy in Environment 2. The shade of colors represents the likelihood of taking the corresponding action at that moment, with darker colors representing greater probability.}
	\label{heat_map2}
\end{figure}

\begin{table}[ht]
	\centering
	\caption{Comparison with benchmark: TWAP in Environment 2. The numbers in the table show the mean value of the results over one hundred out-of-sample experiments, and the standard deviations are shown in brackets.}
	\begin{tabular}{|c|c|c|c|c|c|}
		\hline
		~& ADP&exploratory ADP&SAC & ML-AC & MO-AC   \\ \hline
		\multirow{2}{*}{Average Return}& 19.902&19.291& 14.054 & 19.727 & 19.057  \\ 
		&(0.842)&(0.586)&(3.247)&(0.657)&(0.870)\\\hline
		\multirow{2}{*}{$\Delta \text{PnL}$}& 0.130&0.0944&-0.212 & 0.115 & 0.0784 \\ 
		~&(0.0390)&(0.0571)&(0.1572)&(0.0610)&(0.0808) \\\hline
	\end{tabular}
	\label{benchmark2}
\end{table}

Similarly, we verify the Theorem \ref{thm2} in Environment 2 according to the bottom five figures of Figure \ref{pic:error1}. The mean values of the Gaussian distributions learned by algorithms converge to the optimal policy. 

The comparison of different algorithms and TWAP in environment 2 is displayed in Table \ref{benchmark2}. The algorithms except SAC outperform TWAP stably, which indicates that our algorithms have good performance in the environment with relatively large price impacts. Besides, when we compare specific actions taken by four exploratory algorithms, which is shown in Figure \ref{heat_map2}, the optimal policy given by SAC is the most volatile. Furthermore, comparing two RL algorithms results in Figure \ref{heat_map1} and Figure \ref{heat_map2}, the agent's trading speed decreases over time in Environment 2 while increases in Environment 1. The main reason is that the terminal penalty in Environment 2 is smaller than Environment 1, so as the time is closer to $ T $, the agent prefers to retain the inventory until the terminal time $ T $, instead of facing the possible slippage that the agent has learned from previous episodes. However, exploratory ADP and SAC do not learn this feature from the environment in either setting. Similar to the results in Environment 1, we find that the policy given by MO-AC is more deterministic.

\section{Conclusions}
\label{section_conclusion}
In this paper, we focus on the exploratory liquidation problem targeting at VWAP, and develop two types of learning algorithms: adaptive dynamic programming algorithms and reinforcement learning algorithms. Our theoretical results prove the existence and uniqueness of exploratory state equations, and derive the explicit formula of the optimal policy, which is Gaussian distributed. Besides, we give some theoretical proofs for the application of continuous RL to processes with jumps. The ADP algorithms is derived from the HJB equation, and only requires estimating parameters $ b $ and $ k $. There is no need for us to estimate parameters about the market trading speed in ADP algorithms, while two RL algorithms, ML-AC and MO-AC, are totally model-free and can quickly learn from environments. Numerical results show that all algorithms are effective in different environments and outperform TWAP in the environment with larger price impacts. ADP is a good choice when the agent can model the environment and estimate the parameters $ b, k $ well, because it converges quickly. However, when the agent doesn't know much about the environment, RL algorithms provide a more general way to learn optimal policies from the environment directly through interaction.

The study of VWAP is very important because it is a price benchmark for intraday trading, and a full knowledge of VWAP can help us to design intraday trading strategies and obtain more return, which requires further research. Another possible direction for future research is to apply the learning algorithms proposed in this paper to the empirical study of Chinese market. 

\section*{Acknowledgments}
We are grateful to Sebastian Jaimungal, one of the authors of \textit{Algorithmic and High-Frequency Trading} \cite{cartea2015algorithmic}, for coming to Shanghai to present his work, and the Chinese version of the book is very popular in China. Moreover, we would like to acknowledge the support from Shanghai Yuliang Intelligence Data Technology Corporation for our VWAP research.

% \bibliographystyle{unsrt}  
% \bibliography{references}  %%% Remove comment to use the external .bib file (using bibtex).
%%% and comment out the ``thebibliography'' section.

%%% Comment out this section when you \bibliography{references} is enabled.
% \begin{thebibliography}{1}

% \bibitem{kour2014real}
% George Kour and Raid Saabne.
% \newblock Real-time segmentation of on-line handwritten arabic script.
% \newblock In {\em Frontiers in Handwriting Recognition (ICFHR), 2014 14th
%   International Conference on}, pages 417--422. IEEE, 2014.

% \bibitem{kour2014fast}
% George Kour and Raid Saabne.
% \newblock Fast classification of handwritten on-line arabic characters.
% \newblock In {\em Soft Computing and Pattern Recognition (SoCPaR), 2014 6th
%   International Conference of}, pages 312--318. IEEE, 2014.

% \bibitem{hadash2018estimate}
% Guy Hadash, Einat Kermany, Boaz Carmeli, Ofer Lavi, George Kour, and Alon
%   Jacovi.
% \newblock Estimate and replace: A novel approach to integrating deep neural
%   networks with existing applications.
% \newblock {\em arXiv preprint arXiv:1804.09028}, 2018.

% \end{thebibliography}

\end{document}